\newtheorem*{acknowledgements}{Acknowledgements}
\newtheorem{theorem}{Theorem}[section]
\newtheorem{corollary}[theorem]{Corollary}
\newtheorem{definition}[theorem]{Definition}
\newtheorem{example}[theorem]{Example}
\newtheorem{lemma}[theorem]{Lemma}
\newtheorem{proposition}[theorem]{Proposition}
\newtheorem{remark}[theorem]{Remark}
\newtheorem{conjecture}[theorem]{Conjecture}
\numberwithin{equation}{section}
\let\oldsqrt\sqrt
\def\sqrt{\mathpalette\DHLhksqrt}
\def\DHLhksqrt#1#2{%
\setbox0=\hbox{$#1\oldsqrt{#2\,}$}\dimen0=\ht0
\advance\dimen0-0.2\ht0
\setbox2=\hbox{\vrule height\ht0 depth -\dimen0}%
{\box0\lower0.4pt\box2}}
\renewcommand{\tilde}{\widetilde}
\renewcommand{\bar}{\overline}
\renewcommand{\Re}{\operatorname{Re}}
\newcommand\pa{\partial}
\newcommand\cf{cf\@. }
\newcommand\eps\varepsilon
\renewcommand\epsilon\varepsilon
\newcommand\Id{\operatorname{Id}}
\renewcommand\Im{\operatorname{Im}}
\newcommand\rank{\operatorname{rank}}
\renewcommand\Re{\operatorname{Re}}
\DeclareMathOperator*\Res{\operatorname{Res}}
\newcommand\pr{\operatorname{pr}}
\newcommand\GL{\operatorname{GL}}
\newcommand\Ad{\operatorname{Ad}}
\newcommand\Symm{\operatorname{Sym}}
\newcommand\bL{\overline{L}}
\newcommand\paperintro%
\newcommand\paperbody%
\newcommand{\C}{\mathbb{C}}
\newcommand{\R}{\mathbb{R}}
\newcommand{\Z}{\mathbb{Z}}
\newcommand{\Q}{\mathbb{Q}}
\newcommand{\bH}{\mathbb{H}}
\newcommand{\N}{\mathbb{N}}
\renewcommand{\Re}{\operatorname{Re}}
\renewcommand{\Im}{\operatorname{Im}}
\newcommand{\bsl}{\backslash}
\newcommand{\G}{\mathbf{G}}
\newcommand{\T}{\mathbf{T}}
\newcommand\bbC{\mathbb{C}}
\newcommand\bbH{\mathbb{H}}
\newcommand\bbN{\mathbb{N}}
\newcommand\bbP{\mathbb{P}}
\newcommand\bbQ{\mathbb{Q}}
\newcommand\bbR{\mathbb{R}}
\newcommand\bbS{\mathbb{S}}
\newcommand\bbZ{\mathbb{Z}}
\newcommand\cH{\mathcal{H}}
\newcommand\cK{\mathcal{K}}
\newcommand\cL{\mathcal{L}}
\newcommand\cO{\mathcal{O}}
\newcommand\SL{\operatorname{SL}}
\newcommand\SO{\operatorname{SO}}
\newcommand\Spin{\operatorname{Spin}}
\newcommand\bX{\overline{X}}
\newcommand\tE{\widetilde{E}}
\newcommand\SU{\operatorname{SU}}
\newcommand\bz{\overline{z}}
\newcommand\vol{\operatorname{vol}}
\newcommand\free{\operatorname{free}}
\newcommand\tor{\operatorname{tor}}
\newcommand\bE{\overline{E}}
\newcommand\bV{\overline{V}}
\newcommand\bmu{\overline{\mu}}
\newcommand\ga{\mathfrak{a}}
\newcommand\gf{\mathfrak{g}}
\newcommand\kf{\mathfrak{k}}
\newcommand\pf{\mathfrak{p}}
\DeclareMathAlphabet{\mathpzc}{OT1}{pzc}{m}{it}
\begin{document}

\title[Exponential growth torsion]{Exponential growth of torsion in the 
cohomology of arithmetic hyperbolic manifolds}

\author{Werner M\"uller}
\address{Universit\"at Bonn, Mathematisches Institut, Endnicher Allee 60, D-53115 Bonn, Germany}
\email{mueller@math.uni-bonn.de}
\author{Fr\'ed\'eric Rochon}
\address{D\'epartement de Math\'ematiques, UQ\`AM}
\email{rochon.frederic@uqam.ca }

\begin{abstract}
For $d=2n+1$ a positive odd integer, we consider sequences of arithmetic subgroups of $\SO_0(d,1)$ and $\Spin(d,1)$ yielding corresponding hyperbolic manifolds of finite volume and show that, under appropriate and natural assumptions, the torsion of the associated cohomology groups grows exponentially.     
\end{abstract}

\maketitle

\tableofcontents

\section{Introduction}

The main goal of this paper is to extend the results of Bergeron and Venkatesh
\cite{BV} on the growth of torsion in the cohomology of cocompact arithmetic
groups to the case of arithmetic hyperbolic manifolds of finite volume. To
begin with we recall some of the results of \cite{BV}.
Let $\G$ be a connected semi-simple algebraic group defined over $\Q$ and let
$G=\G(\R)$ be its group of real points. Let $K$ be a maximal compact subgroup
of $G$ and $\widetilde X=G/K$ the associated Riemannian symmetric space. 
Let $\Gamma\subset \G(\Q)$ be an arithmetic subgroup. Assume that $\Gamma$ is
cocompact or equivalently that $\G$ is anisotropic over $\Q$. Let  
$X:=\Gamma\bsl \widetilde X$ be the corresponding locally symmetric space. 
Let $V_\Z$ be an arithmetic $\Gamma$-module in the sense of \cite{BV}. 
This means that there exists a $\Q$-rational representation $\varrho$ of $\G$ 
on a finite-dimensional $\Q$-vector space $V_\Q$ such that $V_\Z$ is a 
$\Z$-lattice in $V_\Q$ which is invariant under $\varrho(\Gamma)$. The cohomology
$H^*(\Gamma;V_\Z)$ of $\Gamma$ with coefficients in $V_\Z$ is a finitely 
generated abelian group. If $\Gamma$ is torsion free,  
$H^*(\Gamma;V_\Z)$ is isomorphic to the cohomology $H^*(X;E_\Z)$ of $X$ 
with coefficients in the local system $E_\Z$ associated to the $\Gamma$-module 
$V_\Z$. 

Let $\gf$ and $\kf$ be the Lie algebras of $G$ and $K$, respectively. Let 
$\delta(\widetilde X):=\rank(\gf_\C)-\rank(\kf_\C)$ be the fundamental rank.
As explained by Bergeron and Venkatesh in \cite{BV}, for arithmetic reasons, 
when $\delta(\widetilde X)=1$, one expects that $H^*(\Gamma;V_{\bbZ})$ should 
have a lot of torsion and a small free part.  There are various manifestations 
of this 
phenomenon.  One of them, proved in \cite{BV} by Bergeron and Venkatesh, is 
that if $\Gamma\supset\cdots\supset\Gamma_{N-1}\supset\Gamma_N\supset\cdots
\supset\{e\}$  is a decreasing sequence of congruence subgroups such that 
$\displaystyle \cap_{N} \Gamma_N=\{1\}$, and $V_{\bbZ}$ is a {\it strongly acyclic $\Gamma$-module}, meaning that the Laplacians on $V_\Z\otimes\C$-valued 
$i$-forms on $\Gamma_N\backslash\widetilde X$ have a uniform spectral gap at 
$0$ for all $i$ and $N$, then
\begin{equation}
  \liminf_{N\to \infty} \sum_j \frac{\log|H^j_{\tor}(\Gamma_N;V_{\bbZ})|}
{[\Gamma: \Gamma_N]} \ge c_{G,V_{\bbZ}}>0,
\label{int.1}\end{equation}
where the sum is over the integers $j$ such that $j+\frac{\dim(G/S)-1}2$ is odd.  If one fixes $\Gamma$ and varies the arithmetic $\Gamma$-module $V_{\bbZ}$, 
parallel results were obtained in \cite{Marshall-Muller, MP2014b}.  In both 
settings, the strategy of the proof of these results is to use 
\cite{Muller1993} (see also \cite{Bismut-Zhang}) and to compute the limiting 
behavior of analytic torsion using spectral methods, see in particular 
\cite{Muller2012, MP2013} for sequences of $\Gamma$-modules. Since many 
important arithmetic groups are not cocompact, it is very desirable to extend 
these results to the finite volume case.  

The aim of this paper is to establish an analog of \eqref{int.1} for 
odd-dimensional hyperbolic manifolds of finite volume. So assume that $\G$  
is an algebraic group over $\Q$ such that $G=\G(\R)$ equals $\SO(d,1)$ or 
$\Spin(d,1)$ with 
$d=2n+1$ a positive odd integer. Fix a maximal compact subgroup $K$ which 
equals $\SO(d)$ or $\Spin(d)$, respectively. Let $\Gamma\subset \G(\bbQ)$ be an 
arithmetic subgroup. We are interested in the case where $\Gamma$ is not 
cocompact. Therefore we assume that $\G$ is not anisotropic.
Then  $X:= \Gamma\setminus G/K$ is a non-compact finite volume
hyperbolic orbifold, possibly with orbifold singularities since we are not assuming that $\Gamma$ is torsion-free.
When $n=1$ and $G=\Spin(3,1)\cong \SL(2,\bbC)$, 
generalizations of \cite{BV} and  \cite{Marshall-Muller} have been already 
obtained.  In \cite{Pfaff2014b}, Pfaff has obtained an analog of \eqref{int.1} 
when $\{\Gamma_N\}$ is a sequence of congruence subgroups of a torsion-free 
congruence subgroup $\Gamma$ of a Bianchi group.  Similarly, for a 
torsion-free congruence subgroup $\Gamma$ of a Bianchi group, Pfaff and 
Raimbault in \cite{Pfaff-Raimbault} have obtained an analog of 
\cite{Marshall-Muller}.  

Next we introduce appropriate $\Gamma$-modules. 
Let ${\boldsymbol \varrho}$ be  a $\Q$-rational representation of 
$\G$ on a finite-dimensional $\Q$-vector space $V_\Q$. Assume that 
there exists
a lattice $V_\Z\subset V_\Q$, i.e., $V_\Q=V_\Z\otimes_\Z\Q$, which is invariant 
under $\Gamma$. Furthermore, assume that representation
$\varrho$
of $G$ on $V=V_\Q\otimes_{\bbQ} \bbR$, which is associated to ${\boldsymbol \varrho}$,
 is a direct sum of finitely many  
irreducible finite dimensional complex representations $\varrho_i$  each of 
which 
satisfies
\begin{equation}
\varrho_i\circ \theta\ne \varrho_i,
\label{int.3}\end{equation}
where $\theta$ is the standard Cartan involution with respect $K$. The 
existence of such $\Gamma$-modules is proved in \cite[sect. 8.1]{BV}. See
section \ref{co.0} for more details.
Let $\{\Gamma_i\}$ be a sequence of torsion-free finite index subgroups of 
$\Gamma$. Then
$$
             X_i= \Gamma_i\setminus G/K
$$
is a $d$-dimensional oriented hyperbolic manifold of finite volume. Denote by
$\bar X_i$ the Borel-Serre compactification of $X_i$. This is a compact 
manifold with boundary which consists of the disjoint union of finitely many
tori. For each $\Gamma_i$, assume that for each $\Gamma_i$-cuspidal parabolic subgroup $P$, we have
\begin{equation}
       \Gamma_i\cap P= \Gamma_i\cap N_P,
\label{int.2}\end{equation}
where $N_P$ denotes the nilpotent radical of $P$. The assumption \eqref{int.3} 
implies that all Laplacians on $V_\Z\otimes\C$-valued $q$-forms on 
$X_i=\Gamma_i\backslash\bH^d$ have a uniform spectral gap at $0$ for all $q$ and
$i$. In particular, the spaces of $L^2$-harmonic forms all vanish. However, 
the $\Gamma_i$-module $V_\Z$ is not acyclic. The interior cohomology vanishes,
but there is always cohomology coming from the boundary. In analogy with
\cite{BV}, we call such a $V_\Z$ a {\it strongly $L^2$-acyclic arithmetic 
$\Gamma$-module}. 

For our approach we need to work with self-dual $\Gamma$-modules. Let 
$E_\Z$ be the bundle of free $\Z$-modules over $X_i$ associated to the 
arithmetic $\Gamma_i$-module $V_\Z$. Put 
\[
\bar E_\Z=E_\Z\oplus E_\Z^*\;\text {and}\; \bar\varrho=\varrho\oplus\varrho^*,
\]
where $\varrho^*$ denotes the contragredient representation of $\varrho$. 
Let $T^{(2)}_X(\varrho)$ denote the $L^2$-torsion of $X=\Gamma\backslash\bH^d$ 
and $\varrho$ seen as a complex representation \cite{Lo}, \cite{MV}. Recall that $\log T^{(2)}_X(\varrho)=
t^{(2)}_{\bH^d}(\varrho)\cdot\vol(\Gamma\backslash\bH^d)$, 
where $t^{(2)}_{\bH^d}(\varrho)$ is a constant that depends only on $\bbH^d$ and
$\varrho$. Let $\overline{\varrho}:=\varrho\oplus \varrho^*$ and
\[
t^{(2)}_{\bbH^d}(\overline{\varrho})= t^{(2)}_{\bbH^d}(\varrho)+ 
t^{(2)}_{\bbH^d}(\varrho^*).
\]
Our main result is the following theorem.
\begin{theorem}
Suppose that the above sequence $\{\Gamma_i\}$ of subgroups of $\Gamma$ is cusp-uniform in the sense of Definition~\ref{gt.15} below. 
Assume that  $\displaystyle \lim_{i\to \infty}\ell(\Gamma_i)=\infty$, where $\ell(\Gamma_i)$ is the length of the shortest closed geodesic on $X_i$, and that
\begin{equation}
  \lim_{i\to \infty}  \frac{\kappa(X_i)+ \sum_{P\in \mathfrak{P}_{\Gamma_i}}\log [\Gamma_P: \Gamma_{P,i}]}{[\Gamma: \Gamma_i]}=0,
\label{gt.31}\end{equation}
where $\kappa(X_i)= \# \mathfrak{P}_{\Gamma_i}$ is the number of cusps of $X_i$.  
Let $V_\Z$ be a strongly $L^2$-acyclic arithmetic $\Gamma$-module and $E_\Z$ the
local system associated to $V_\Z$. 
Then  we have 
\begin{equation}
   \liminf_{i\to \infty} \frac{\underset{q+n \; \; \operatorname{odd}}{\sum}\log |H^q_{\tor}(\bX_i;\bE_{\bbZ})| }{[\Gamma: \Gamma_i]}\ge (-1)^n t^{(2)}_{\bbH^d}(\overline{\varrho}) \vol(X)>0,
\label{gt.32}\end{equation}
 Furthermore, if $E_{\bbZ}^*\cong E_{\bbZ}$ and $\varrho$ is self-dual, then in fact
\begin{equation}
 \liminf_{i\to \infty} \frac{\underset{q \; \; \operatorname{even}}{\sum}\log |H^q_{\tor}(\bX_i;E_{\bbZ})| }{[\Gamma: \Gamma_i]}\ge (-1)^n t^{(2)}_{\bbH^d}(\varrho) \vol(X)>0.
 \label{gt.32b}\end{equation}
  \label{gt.30}\end{theorem}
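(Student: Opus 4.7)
The plan is to combine three ingredients: (i) a Cheeger--M\"uller type theorem for cusped hyperbolic manifolds relating the analytic torsion on $X_i$ to a Reidemeister torsion on the Borel--Serre compactification $\bX_i$, up to explicit cuspidal corrections; (ii) an algebraic identity expressing the Reidemeister torsion in terms of torsion orders of cohomology groups modulo regulators of the free parts; and (iii) a spectral convergence argument showing that the normalized analytic torsion converges to the $L^2$-torsion density of $\bH^d$ times $\vol(X)$.

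For step (i), the Cheeger--M\"uller-type theorem developed earlier in the paper should yield an identity
\begin{equation*}
\log T^{\mathrm{an}}(X_i; \bar\varrho) = \log \tau_R(\bX_i; \bE_{\bbZ}) + \mathcal{A}_i,
\end{equation*}
where the cuspidal correction $\mathcal{A}_i$ is bounded by a universal constant times $\kappa(X_i) + \sum_{P\in \mathfrak{P}_{\Gamma_i}} \log[\Gamma_P : \Gamma_{P,i}]$, hence is $o([\Gamma : \Gamma_i])$ by \eqref{gt.31}. For step (ii), the self-duality of $\bE_{\bbZ}$, the oddness of $d=2n+1$, and Poincar\'e--Lefschetz duality applied to the pair $(\bX_i, \partial \bX_i)$ should yield an identity of the form
\begin{equation*}
\log \tau_R(\bX_i; \bE_{\bbZ}) = (-1)^n \sum_{q+n \;\mathrm{odd}} \log |H^q_{\tor}(\bX_i; \bE_{\bbZ})| + \mathcal{B}_i,
\end{equation*}
where the parity constraint on the surviving indices reflects the pairing $q \leftrightarrow d-q$ and $\mathcal{B}_i$ is a regulator term accounting for the free parts of the cohomology and a choice of integral basis on $\partial \bX_i$. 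Strong $L^2$-acyclicity forces interior cohomology to vanish, so the free parts are concentrated on the cuspidal boundary; consequently $\mathcal{B}_i$ is again controlled by $\kappa(X_i) + \sum_P \log [\Gamma_P : \Gamma_{P,i}]$, giving $\mathcal{B}_i = o([\Gamma : \Gamma_i])$.

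For step (iii), the Selberg trace formula on $X_i$ together with the uniform spectral gap at zero (a consequence of strong $L^2$-acyclicity) and the systolic growth $\ell(\Gamma_i)\to\infty$ should yield
\begin{equation*}
\lim_{i\to\infty} \frac{\log T^{\mathrm{an}}(X_i; \bar\varrho)}{[\Gamma:\Gamma_i]} = t^{(2)}_{\bH^d}(\bar\varrho)\,\vol(X).
\end{equation*}
Heuristically, the identity orbital integral produces the $\vol$ factor with density $t^{(2)}_{\bH^d}(\bar\varrho)$; the systole hypothesis kills the contributions of non-identity hyperbolic conjugacy classes; the discrete-spectrum contribution converges by the uniform spectral gap and a Benjamini--Schramm type convergence to $\bH^d$; and the continuous-spectrum and unipotent contributions, which are the novelty of the finite-volume setting, are tamed by cusp-uniformity in the sense of Definition~\ref{gt.15} together with \eqref{gt.31}. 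Combining the three steps and taking $\liminf$ after dividing by $[\Gamma : \Gamma_i]$ produces \eqref{gt.32}. The refinement \eqref{gt.32b} then follows by running the same argument with $\varrho$ and $E_{\bbZ}$ in place of $\bar\varrho$ and $\bE_{\bbZ}$, which is justified precisely under the additional self-duality hypothesis $E_{\bbZ}^* \cong E_{\bbZ}$.

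The main obstacle is step (iii). In the cusped setting, the analytic torsion is defined through a regularized trace involving the scattering matrix and its logarithmic derivative, and pushing the limit identification uniformly across the tower requires handling Eisenstein-series contributions whose size depends sensitively on how the cusps are being opened up in the sequence $\{\Gamma_i\}$. The cusp-uniformity assumption and the smallness hypothesis \eqref{gt.31} are designed precisely to control these cuspidal anomalies and to match the asymptotics of the geometric and spectral sides, in the spirit of the congruence conditions imposed by Pfaff and by Pfaff--Raimbault in the Bianchi setting.
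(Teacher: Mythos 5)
Your overall architecture --- Cheeger--M\"uller plus regulator control plus the spectral limit $\lim_i \log T(X_i;\bE)/[\Gamma:\Gamma_i] = t^{(2)}_{\bbH^d}(\bar\varrho)\vol(X)$ --- matches the paper, and steps (i) and (iii) are handled essentially as you describe: step (i) via Theorem~\ref{pr.6} (where the cuspidal correction is $\kappa(X_i)c_{\bar\varrho}$ together with the boundary torsion, which vanishes by Milnor duality), and step (iii) by citing the spectral-convergence result \cite[Theorem~1.1]{MP2014} as a black box.

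The genuine gap is in step (ii). You assert an \emph{identity} $\log\tau_R(\bX_i;\bE_{\bbZ}) = (-1)^n \sum_{q+n\text{ odd}} \log|H^q_{\tor}(\bX_i;\bE_{\bbZ})| + \mathcal{B}_i$ with $\mathcal{B}_i$ controlled by cuspidal data $\kappa(X_i)+\sum_P\log[\Gamma_P:\Gamma_{P,i}]$. No such identity holds, and the reason Theorem~\ref{gt.30} produces only a $\liminf$ inequality is precisely that one cannot obtain it. After passing to the relative torsion $\tau(\bX_i,\pa\bX_i;\bE,\bmu_{X_i,\pa X_i})$ via Poincar\'e--Lefschetz duality, the Cheeger formula \eqref{gt.2a} involves the covolumes of $H^q_{\free}(\bX_i,\pa\bX_i;\bE_{\bbZ})$ with respect to the chosen basis, and \eqref{gt.2b} expresses each such covolume as a boundary covolume \emph{divided by the index} $[H^q_{\free}(\bX_i,\pa\bX_i;\bE_{\bbZ}):\pa(H^{q-1}_{\free}(Z_i;\bE_{\bbZ}))]$. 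That index is not controlled by cuspidal data: the best the long exact sequence gives is $1 \le [\cdots] \le |H^q_{\tor}(\bX_i;\bE_{\bbZ})|$, which is \eqref{gt.2c}. Feeding this back into \eqref{gt.2a} is what introduces the extra torsion factor, the loss of a factor of $2$, and, via the cancellation with \eqref{gt.3}, the surviving parity $q+n$ odd in \eqref{gt.5} --- it is \emph{not} a clean $q \leftrightarrow d-q$ pairing as in the cocompact case. Your remark that strong $L^2$-acyclicity forces interior cohomology to vanish, so the free parts are ``concentrated on the boundary,'' is also insufficient: $H^q(X_i(Y);E)$ is nonzero for $n\le q\le 2n$ (Eisenstein cohomology), and the Eisenstein classes embed into boundary cohomology rationally but not integrally, which is exactly why one cannot directly estimate the covolumes in \eqref{gt.2} with respect to $\bmu_{X_i}$ and why the paper moves to the relative complex in the first place (Lemma~\ref{pld.1}). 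The boundary regulator terms that do appear are controlled by cusp-uniformity and \eqref{gt.31} through Propositions~\ref{gt.19} and~\ref{gt.28}, and this covolume analysis is a genuine lemma rather than an automatic consequence of $L^2$-acyclicity.
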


\begin{remark}
As explained in \cite[Theorem~1.2]{MP2014}, if the sequence $\{\Gamma_i\}$ is such that $\displaystyle \lim_{i\to \infty} [\Gamma:\Gamma_i]=\infty$, and if each $\gamma\in \Gamma\setminus \{1\}$ only belongs to finitely many $\Gamma_i$, then \eqref{gt.31} is automatically satisfied and $\displaystyle \lim_{i\to \infty} \ell(\Gamma_i)=\infty$.  In particular, this is the case if $\{\Gamma_i\}$ is a decreasing sequence 
$$
       \cdots \subset \Gamma_{i+1}\subset \Gamma_i\subset \cdots \subset \Gamma_1\subset \Gamma_0:= \Gamma
$$ 
of normal subgroups such that $\cap_i \Gamma_i= \{1\}$.  
\label{gt.37}\end{remark}
\begin{remark}
Recall that $\Gamma_i$ is torsion free and $G/K$ is contractible. Therefore we
have  
\[
H^q(\bX_i,E_{\bbZ})\cong H^q(\Gamma_i;V_\Z).
\]
See \cite[Chapt. II, Prop. 4.1]{Brown} for the case of trivial coefficients.
This gives the connection with \eqref{int.1}.
\end{remark}
One important case covered by Theorem~\ref{gt.30} is for sequences of principal congruence subgroups.  More precisely, let $a_1,...,a_d\in\N$ and let $q$ be the
quadratic form defined by 
\begin{equation}\label{quf}
q(x_1,...,x_d,x_{d+1})=a_1x_1^2+\cdots+a_dx_d^2-x_{d+1}^2.
\end{equation}
Let $\G$ be the algebraic group defined by
$q$, i.e., for every $\Q$-algebra $A$ with unit element we have 
\[
\G(A)=\{g\in\SL(n,A)\colon q(gx)=q(x),\; x\in A^{d+1}\}.
\]
Then $\G$ is defined over $\Q$ and $G:=\G(\R)\cong\SO(d,1)$. 
By \cite{BHC}, the subgroup $\Gamma:=\mathbf{G}(\bbZ)$ is a lattice in $G$.  
Furthermore, for $d\ge 4$, $\G(\bbZ)$ is noncompact \cite[Prop. 6.4.1]{Mo2015},
and for $d\neq 3,7$, every noncompact arithmetic subgroup of $\SO(d,1)$
is up to commensurability and conjugation equal to some $\G(\Z)$ \cite[Prop. 
6.4.2]{Mo2015}.

For $j\in\N$ let
\begin{equation}
   \Gamma(j):= \{ A\in \Gamma \; | \; A \equiv 1 \mod(j) \}
\label{gt.38}\end{equation}  
denote the principal congruence subgroup of level $j$, so that $\Gamma(j)$ coincide with the kernel of the canonical map $\Gamma\to \G(\bbZ/j\bbZ)$, which implies in particular that $\Gamma(j)$ is a normal subgroup of $\Gamma$.  If $j\ge 3$, we know that $\Gamma(j)$ is neat in the sense of Borel \cite{B1969}, so that $\Gamma(j)$ is torsion-free and satisfies \eqref{int.2}.  Furthermore, by \cite[Lemma~4]{DH1999}, \cf \cite[Lemma~10.1]{MP2014}, we know that the sequence $\{\Gamma(j)\}$ is cusp-uniform.  On the other hand, it is obvious that each $\gamma \in \Gamma$ belongs only to finitely many $\Gamma(j)$.  Finally, for a $\Gamma$-cuspidal parabolic subgroup $P$, \cite[Lemma~4]{DH1999} implies that  $\displaystyle \lim_{j\to \infty} [\Gamma\cap N_P: \Gamma(j)\cap N_P]=\infty$, hence that  $\displaystyle \lim_{j\to \infty}[\Gamma : \Gamma(j)] =\infty$.  
Thus, applying Theorem~\ref{gt.30} gives the following.  

\begin{corollary}
Let the assumptions be as above.   For each $j\ge 3$, let 
$X_j:= \Gamma(j)\setminus \bbH^d$ and 
denote by $E_{\bbZ}= \bbH^d\times_{\left. \varrho\right|_{\Gamma(j)}} V_{\bbZ}$ the bundle of free $\bbZ$-modules over $X_j$ associated to a strongly $L^2$-acyclic 
arithmetic $\Gamma$-module $V_{\bbZ}$.  Then 
$$
\liminf_{j\to \infty} \frac{\underset{q+n \; \; \operatorname{odd}}{\sum}\log |H^q_{\tor}(\bX_j;\bE_{\bbZ})| }{[\Gamma: \Gamma(j)]}\ge (-1)^n t^{(2)}_{\bbH^d}(\overline{\varrho}) \vol(X)>0,$$   
where $X:=\Gamma\setminus \bbH^d$.  Furthermore, if $E_{\bbZ}\cong E^*_{\bbZ}$ and $\varrho$ is self-dual, then
\begin{equation}
\liminf_{j\to \infty} \frac{\underset{q \; \; \operatorname{even}}{\sum}\log |H^q_{\tor}(\bX_j; E_{\bbZ})| }{[\Gamma : \Gamma(j)]}\ge (-1)^n t^{(2)}_{\bbH^d}(\varrho) \vol(X)>0.
\label{int.9}\end{equation}
\label{gt.39}\end{corollary}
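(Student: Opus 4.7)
The corollary is a direct application of Theorem~\ref{gt.30} to the sequence $\{\Gamma(j)\}_{j \geq 3}$, so my plan is simply to verify each hypothesis of the theorem in turn; the bulk of the verification is already indicated in the paragraph preceding the statement, and the proof essentially consists of assembling those observations into a single line of reasoning.

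First, I would record that for $j \geq 3$ each $\Gamma(j)$ is neat in the sense of Borel~\cite{B1969}, which immediately yields both torsion-freeness and the cuspidal intersection property~\eqref{int.2}. Next, I would appeal to \cite[Lemma~4]{DH1999} (equivalently \cite[Lemma~10.1]{MP2014}) to obtain cusp-uniformity of the family $\{\Gamma(j)\}$ in the sense of Definition~\ref{gt.15}. These two facts cover the structural hypotheses of Theorem~\ref{gt.30}.

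To establish the two limit hypotheses of the theorem --- namely $\ell(\Gamma(j)) \to \infty$ and the vanishing in~\eqref{gt.31} --- the cleanest route is Remark~\ref{gt.37}: it suffices to show that $[\Gamma : \Gamma(j)] \to \infty$ and that each $\gamma \in \Gamma \setminus \{1\}$ belongs to only finitely many $\Gamma(j)$. The second assertion I would handle directly: the congruence $\gamma \equiv I \pmod{j}$ forces $j$ to divide every nonzero entry of $\gamma - I$, so only finitely many $j$ satisfy it. For the first I would fix any $\Gamma$-cuspidal parabolic $P$ and observe that $[\Gamma \cap N_P : \Gamma(j) \cap N_P] \to \infty$ by \cite[Lemma~4]{DH1999} (the reduction-mod-$j$ map on the rank $d-1$ lattice $\Gamma \cap N_P$ has image of order growing like $j^{d-1}$), which forces $[\Gamma : \Gamma(j)] \to \infty$ a fortiori.

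With all hypotheses in place, I would invoke Theorem~\ref{gt.30} applied to $\{\Gamma(j)\}_{j \geq 3}$ and $V_\Z$ to conclude both inequalities. Since all analytic and geometric content has already been absorbed into Theorem~\ref{gt.30} and the cited lemmas of Borel and Deitmar--Hoffmann, there is no substantive analytic obstacle in the corollary; the only point worth a sanity check is that the algebraic group defined by the form~\eqref{quf} is a $\Q$-form of $\SO(d,1)$ of positive $\Q$-rank to which both Theorem~\ref{gt.30} and \cite{DH1999} genuinely apply --- automatic for $d \geq 4$ by \cite[Prop.~6.4.1]{Mo2015}, which guarantees that $\mathbf{G}(\bbZ)$ is noncompact and hence the relevant parabolic structure is present.
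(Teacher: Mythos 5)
Your proposal is correct and follows essentially the same route as the paper: neatness of $\Gamma(j)$ for $j\ge 3$ gives torsion-freeness and \eqref{int.2}, cusp-uniformity comes from \cite[Lemma~4]{DH1999}, and the two limit hypotheses of Theorem~\ref{gt.30} are reduced to $[\Gamma:\Gamma(j)]\to\infty$ together with the finite-membership condition via Remark~\ref{gt.37}, both of which you verify exactly as the paper does. The only additions you make --- unpacking why a nontrivial $\gamma$ lies in at most finitely many $\Gamma(j)$, spelling out why $[\Gamma\cap N_P:\Gamma(j)\cap N_P]\to\infty$ forces $[\Gamma:\Gamma(j)]\to\infty$, and the sanity check on $\Q$-rank via \cite[Prop.~6.4.1]{Mo2015} --- are correct and harmless elaborations of steps the paper treats as obvious.
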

We can give the following explicit example where the condition $E_{\bbZ}\cong E_{\bbZ}^*$ is satisfied.
\begin{example}
Let $\varrho: \SO_o(d,1)\to \Lambda^{n+1}(\bbC^{2n+2})$ be the $(n+1)$th exterior power of the standard representation.  By \cite[Theorem~5.5.13]{GW}, $\varrho=\varrho_+\oplus\varrho_-$ for two distinct irreducible complex representations such that $\varrho_+=\varrho_-\circ\vartheta$.  In particular, condition \eqref{int.3} is satisfied.  On the other hand,
$\Lambda^{n+1}(\bbC^{2n+2})= V_{\bbZ}\otimes_{\bbZ}\bbR$ for the free $\bbZ$-module 
$$
   V_{\bbZ}= \Lambda^{n+1}(\bbZ[i]^{2n+2})
$$
naturally preserved by $\Gamma= \mathbf{G}(\bbZ)$ defined in terms of the standard quadratic form of signature $(d,1)$, that is, the one with $a_1=\cdots=a_{d+1}=1$.
Now, this quadratic form induces a canonical isomorphism $V_{\bbZ}^*\cong V_{\bbZ}$, which in turn yields an isomorphism $E_{\bbZ}^*\cong E_{\bbZ}$ for the associated bundle of free $\bbZ$-modules, so that \eqref{int.9} holds in this case.  When $n$ is odd, we can look at the restriction $V_{\bbZ,\pm}$ of $V_{\bbZ}$ to $\varrho_{\pm}$ to get examples where the representation is irreducible.  Indeed, in this case, the representation $\varrho_{\pm}$ is automatically self-dual by \cite[\S~3.2.5]{GW}, while the standard quadratic form induces an isomorphism $V_{\bbZ,\pm}\cong 4(V^*_{\bbZ,\pm})$, so that the corresponding bundles of free $\bbZ$-modules are still self-dual.   
\end{example}

For $n>1$, Theorem~\ref{gt.30} and Corollary~\ref{gt.39} give the first examples of towers of non-compact odd dimensional arithmetic hyperbolic manifolds of finite volume whose torsion in cohomology grows exponentially with respect to the volume.    In contrast, when $n=1$,  some results of exponential growth were already obtained in \cite{Pfaff2014b, Pfaff-Raimbault} using arguments specific to the $3$-dimensional setting.  Still, in this case, our methods can be used  to generalize and sharpen those known results, as well as providing a more systematic proof.  To explain this in more details, take $n=1$ and $G=\Spin(3,1)\cong \SL(2,\bbC)$.  Let $\varrho_m:\SL(2,\bbC)\to \GL(V_m)$ be the $m$th symmetric power $V_m=\Symm^m(\bbC^2)$ of the standard representation of $\SL(2,\bbC)$.  Let also $F=\bbQ(\sqrt{-D})$ be an imaginary quadratic number field, where $D\in \bbN$ is square-free, and let $\cO_D$ be its ring of integers.  Denote by $\Gamma(D):=\SL(2,\cO_D)$ the corresponding Bianchi group.  In $V_m$, 
\begin{equation}
  \Lambda_m:= \Symm^m(\cO_D^2)
\label{int.5}\end{equation}
is a natural lattice preserves by $\Gamma(D)$.  For  a  non-zero ideal $\mathfrak{a}$ of $\cO_D$, let $\Gamma(\mathfrak{a})$ be the associated congruence subgroup of level $\mathfrak{a}$ as in \eqref{gt.48} below.  Let $N(\mathfrak{a})$ denote the norm of $\mathfrak{a}$.  In this context, we will see that Theorem~\ref{gt.30} implies the following.
  
\begin{corollary}
  If $\ga_i$ is a sequence of nonzero ideals in $\cO_D$ such that each $N(a_i)$ is sufficiently large and such that $\displaystyle \lim_{i\to \infty} N(\ga_i)=\infty$, then for any $m\in \bbN$, 
  for $X_D= \Gamma(D)\setminus \bbH^3$, $X_i:=\Gamma(\ga_i)\setminus \bbH^3$ and for $L_m$ the bundle of free $\bbZ$-modules induced by $\varrho_m$ and the lattice $\Lambda_m$, one has that 
\begin{equation}
\liminf_{i\to \infty} \frac{\log |H^2_{\tor}(\bX_i;L_{m})| }{[\Gamma(D): \Gamma(\ga_i)]}\ge  -t^{(2)}_{\bbH^3}(\varrho_m) \vol(X_D)>0.
\label{int.6}\end{equation}
Furthermore, when $m=2\ell$ is even, then  
\begin{equation}
\liminf_{i\to \infty} \frac{\log |H^2_{\tor}(\bX_i;L_{2\ell})| }{[\Gamma(D): \Gamma(\ga_i)]}\ge \left(\frac{\ell(\ell+1)+\frac16}{\pi}\right) \vol(X_D)>0.
\label{int.7}\end{equation}
\label{gt.50}\end{corollary}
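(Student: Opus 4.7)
The plan is to verify the hypotheses of Theorem~\ref{gt.30} for $\Gamma = \Gamma(D) = \SL(2,\cO_D)$ with the sequence $\Gamma_i = \Gamma(\ga_i)$ and the arithmetic module $V_\bbZ = \Lambda_m$, and then specialize the resulting bound to obtain \eqref{int.6} and \eqref{int.7}. First I check condition \eqref{int.3}: viewing $\SL(2,\bbC)$ as a real Lie group with Cartan involution $\theta(g) = (g^*)^{-1}$, we have $\varrho_m \circ \theta \cong \overline{\varrho_m}$, which is not isomorphic to $\varrho_m$ for $m \geq 1$ (a holomorphic representation of type $(m,0)$ of $\SL(2,\bbC)$ is not isomorphic to its complex conjugate of type $(0,m)$), so $\Lambda_m$ is strongly $L^2$-acyclic. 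When $N(\ga_i)$ is sufficiently large the subgroup $\Gamma(\ga_i)$ is neat in Borel's sense, hence torsion-free and satisfying \eqref{int.2}; cusp-uniformity follows from \cite[Lemma~4]{DH1999} exactly as in the proof of Corollary~\ref{gt.39}; and the condition $N(\ga_i) \to \infty$ forces each nontrivial $\gamma \in \Gamma(D)$ to lie in only finitely many $\Gamma(\ga_i)$, so Remark~\ref{gt.37} delivers both \eqref{gt.31} and $\ell(\Gamma(\ga_i)) \to \infty$.

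Now I apply \eqref{gt.32} to $\bar\Lambda_m = \Lambda_m \oplus \Lambda_m^*$. With $n = 1$, the summation is over $q$ with $q+1$ odd, i.e.\ even $q$, and since $\dim \bar X_i = 3$ only $q \in \{0,2\}$ arise. The group $H^0(\bar X_i; \bar L_m) = \bar L_m^{\Gamma(\ga_i)}$ injects into the free $\bbZ$-module $\bar L_m$ and has trivial rationalization by strong $L^2$-acyclicity, so $H^0 = 0$. Combining the direct-sum factorization $|H^2_{\tor}(\bar X_i; \bar L_m)| = |H^2_{\tor}(\bar X_i; L_m)| \cdot |H^2_{\tor}(\bar X_i; L_m^*)|$ with the identity $t^{(2)}_{\bbH^3}(\bar\varrho_m) = 2\, t^{(2)}_{\bbH^3}(\varrho_m)$ (coming from self-duality of $\varrho_m$ over $\bbC$), Theorem~\ref{gt.30} yields
\begin{equation*}
\liminf_{i\to\infty} \frac{\log |H^2_{\tor}(\bar X_i; L_m)| + \log |H^2_{\tor}(\bar X_i; L_m^*)|}{[\Gamma(D):\Gamma(\ga_i)]} \geq -2\, t^{(2)}_{\bbH^3}(\varrho_m)\, \vol(X_D).
\end{equation*}

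To pass from this to a bound on $|H^2_{\tor}(\bar X_i; L_m)|$ alone, I use that the unimodular symplectic form on $\cO_D^2$ combined with the trace map $\cO_D \to \bbZ$ realizes $L_m$ and $L_m^*$ as commensurable lattices in the same complex $\SL(2,\bbC)$-module $V_m \otimes \bbC$, with commensurability index depending only on $m$ and $D$. Since the analytic torsion (and hence the spectral input driving the proof of Theorem~\ref{gt.30}) depends only on the underlying complex representation, the argument applies verbatim with $\bar L_m$ replaced by $L_m$ (or $L_m^*$) alone, giving the lower bound $-t^{(2)}_{\bbH^3}(\varrho_m)\vol(X_D)$ for $\log |H^2_{\tor}(\bar X_i; L_m)|$ and hence \eqref{int.6}. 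For \eqref{int.7}, I specialize to $m = 2\ell$ and evaluate $t^{(2)}_{\bbH^3}(\varrho_{2\ell})$ using Fried's formula on $\bbH^3$ in terms of the Harish-Chandra Plancherel density; the resulting integral is elementary and yields $-t^{(2)}_{\bbH^3}(\varrho_{2\ell}) = (\ell(\ell+1) + \tfrac{1}{6})/\pi$, which substituted into \eqref{int.6} produces \eqref{int.7}. The main obstacle is the splitting step just described: while $L_m$ and $L_m^*$ are commensurable with uniform index, ensuring that the resulting error in the log-torsion is $o([\Gamma(D):\Gamma(\ga_i)])$ requires localizing the analytic-to-combinatorial torsion comparison in the proof of Theorem~\ref{gt.30} to each summand of $\bar L_m$ separately, rather than invoking it only for the self-dual direct sum.
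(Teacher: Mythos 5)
Your verification of the hypotheses of Theorem~\ref{gt.30} is essentially the same as the paper's (via the discussion around \cite[Corollary~1.4]{MP2014}), and your observation that $H^0_{\tor}(\bX_i;\bL_m) = 0$ reduces the alternating sum in \eqref{gt.32} to $q=2$ matches the paper as well. The gap is in the final splitting step. You apply \eqref{gt.32} to $\bar L_m = L_m \oplus L_m^*$ to get a bound on $\log|H^2_{\tor}(L_m)| + \log|H^2_{\tor}(L_m^*)|$, and then try to extract a bound on $\log|H^2_{\tor}(L_m)|$ alone by asserting that $L_m$ and $L_m^*$ are commensurable with uniform index and that ``the argument applies verbatim with $\bL_m$ replaced by $L_m$.'' That last assertion is not justified: the proof of Theorem~\ref{gt.30} uses the self-dual bundle $\bE$ in an essential way through the analytic-torsion comparison of \cite{MR1} and the limit \eqref{gt.33} from \cite{MP2014}, and neither step localizes to a single summand of $\bE_{\bbZ}$. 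Commensurability alone also does not let you distribute a $\liminf$ over a sum of two terms that could oscillate against each other.

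What actually makes this work, and what the paper uses, is Lemma~\ref{gt.47}: the symplectic identification $\varrho_m(A)/\delta_D$ gives a genuine isomorphism of bundles of free $\bbZ$-modules $L_m^* \cong L_m$, not merely a commensurability. This is why the paper bundles this case into the second conclusion \eqref{gt.32b} of Theorem~\ref{gt.30}, whose proof (see \eqref{gt.35b}--\eqref{gt.35c}) uses $E_{\bbZ}^* \cong E_{\bbZ}$ to write $|H^2_{\tor}(\bX_i;\bL_m)| = |H^2_{\tor}(\bX_i;L_m)|^2$ \emph{exactly} and self-duality of $\varrho_m$ over $\bbC$ to write $t^{(2)}_{\bbH^3}(\bar\varrho_m) = 2t^{(2)}_{\bbH^3}(\varrho_m)$; dividing by two then yields \eqref{int.6} with no loss. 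Your intermediate inequality is the right object, but you should invoke the exact isomorphism $L_m^*\cong L_m$ from Lemma~\ref{gt.47} rather than commensurability plus an unjustified re-run of the analytic argument. For \eqref{int.7}, the paper simply cites \cite[(5.23)]{MP2013}; your proposal to re-derive the value of $t^{(2)}_{\bbH^3}(\varrho_{2\ell})$ from the Plancherel density is a valid alternative, just more work than needed.
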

\begin{remark}
This is a generalization and a strengthening of \cite[Theorem~1.1]{Pfaff2014b}, since Corollary~\ref{gt.50} does not require $m$ to be even, the congruence subgroups 
$\Gamma(\mathfrak{a}_i)$ need not be included in a common torsion-free subgroup of $\Gamma(D)$, and our constant on the right hand side of \eqref{int.7} is slightly bigger.   
\end{remark}
\begin{remark}
An upper bound of the $\limsup$ was obtained in 
\cite[Theorem A]{Raimbault2013}. The upper bound is again given in terms of
the $L^2$-torsion. However, we point out that the $L^2$-torsion used in 
\cite{Raimbault2013} is defined over $\bbR$ while we define instead the 
$L^2$-torsion over $\bbC$. Hence  the upper bound in \cite{Raimbault2013}  is in fact twice our lower bound, so we cannot use this upper bound to show that the limit exists.  See the discussion at the end of \S~\ref{d3.0} for more details.  
\end{remark}
Theorem~\ref{gt.30} can also be applied to Hecke subgroups of Bianchi groups.  For a non-zero ideal $\mathfrak{a}$ of $\cO_D$, let $\Gamma_0(\mathfrak{a})$ be the associated Hecke subgroup of \eqref{gt.49} below.  Such a group is never torsion-free, but fixing a torsion-free subgroup $\Gamma'$ of $\Gamma(D)$ of finite index satisfying \eqref{int.2} for all $\Gamma'$-cuspidal parabolic subgroups, we can  consider instead  the subgroup $\Gamma_0'(\mathfrak{a}):=\Gamma_0(\mathfrak{a})\cap \Gamma'$.  
\begin{corollary}
If $\ga_i$ is a sequence of non-zero ideals in $\cO_D$ such that $\displaystyle \lim_{i\to \infty} N(\ga_i)=\infty$, then for $m\in \bbN$, $X_D= \Gamma(D)\setminus \bbH^3$, $X_i:=\Gamma'_0(\ga_i)\setminus \bbH^3$ and for $L_m$ the bundle of free $\bbZ$-modules on $X_i$ induced by $\varrho_m$ and the lattice $\Lambda_m$, one has that
\begin{equation}
\liminf_{i\to \infty} \frac{\log |H^2_{\tor}(\bX_i;L_{m})| }{[\Gamma(D): \Gamma_0'(\ga_i)]}\ge -t^{(2)}_{\bbH^3}(\varrho_m) \vol(X_D)>0.
\label{gt.52b}\end{equation}
Again, when $m=2\ell$, this can be rewritten as
$$
\liminf_{i\to \infty} \frac{\log |H^2_{\tor}(\bX_i;L_{2\ell})| }{[\Gamma(D): \Gamma_0'(\ga_i)]}\ge  \left(\frac{\ell(\ell+1)+\frac16}{\pi}\right) \vol(X_D)>0.
$$
\label{gt.52}\end{corollary}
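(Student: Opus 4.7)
The strategy is to derive Corollary~\ref{gt.52} as a direct application of Theorem~\ref{gt.30} to the sequence $\Gamma_i := \Gamma_0'(\mathfrak{a}_i)$ inside $\Gamma := \Gamma(D)$, using the same strongly $L^2$-acyclic arithmetic $\Gamma(D)$-module $V_{\bbZ} = \Lambda_m$ built in Corollary~\ref{gt.50} (whose strong $L^2$-acyclicity, condition \eqref{int.3}, and self-duality of the associated local system of free $\bbZ$-modules over $\bbH^3$ were already verified). Since $\Gamma_0'(\mathfrak{a}_i) \subset \Gamma'$, torsion-freeness and the cuspidal condition \eqref{int.2} are inherited from $\Gamma'$. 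There are then three remaining hypotheses of Theorem~\ref{gt.30} to check for $\{\Gamma_0'(\mathfrak{a}_i)\}$: cusp-uniformity in the sense of Definition~\ref{gt.15}, the divergence $\ell(\Gamma_0'(\mathfrak{a}_i)) \to \infty$, and the ratio condition \eqref{gt.31}.

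For the length of the shortest closed geodesic, I would observe that any loxodromic (or hyperbolic) element $\gamma = \left( \begin{smallmatrix} a & b \\ c & d \end{smallmatrix} \right) \in \Gamma' \subset \SL(2,\cO_D)$ must have $c \neq 0$: otherwise $\gamma$ is upper triangular in $\SL(2,\cO_D)$, forcing its eigenvalues to be units of $\cO_D$ and hence roots of unity, contradicting loxodromicity. Since $\gamma \in \Gamma_0(\mathfrak{a}_i)$ requires $\mathfrak{a}_i \mid (c)$, and $(c)$ has only finitely many ideal divisors, $\gamma$ belongs to $\Gamma_0'(\mathfrak{a}_i)$ for only finitely many indices. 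For the ratio condition, the number of cusps of $\Gamma_0(\mathfrak{a})$ admits the classical divisor-type bound $\kappa(X_i) = O(N(\mathfrak{a}_i)^{\epsilon})$ for every $\epsilon > 0$; each cuspidal stabilizer index $[\Gamma_P : \Gamma_{P,i}]$ is bounded by $[\Gamma(D) : \Gamma_0'(\mathfrak{a}_i)]$ and so the sum over cusps contributes at most $O(N(\mathfrak{a}_i)^{\epsilon}\log N(\mathfrak{a}_i))$; while $[\Gamma(D) : \Gamma_0'(\mathfrak{a}_i)]$ grows at least like a positive constant times $N(\mathfrak{a}_i) \prod_{\mathfrak{p}\mid\mathfrak{a}_i}(1 + N(\mathfrak{p})^{-1})$, forcing the ratio in \eqref{gt.31} to tend to zero.

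The last and likely most delicate point is cusp-uniformity of $\{\Gamma_0'(\mathfrak{a}_i)\}$. The main obstacle is that, in contrast with principal congruence subgroups, the stabilizer in $\Gamma_0(\mathfrak{a})$ of the standard cusp at $\infty$ remains the full upper triangular subgroup of $\Gamma(D)$, so the collar data at different cusps do not shrink in a symmetric way with $\mathfrak{a}$. My plan is to exploit the inclusion $\Gamma(\mathfrak{a}) \subset \Gamma_0(\mathfrak{a})$: each $\Gamma_0'(\mathfrak{a}_i)$-cuspidal parabolic is an intermediate subgroup between a $\Gamma(\mathfrak{a}_i)\cap \Gamma'$-cuspidal parabolic and the corresponding $\Gamma$-cuspidal parabolic, so one can transport the cusp-uniformity already known for principal congruence sequences (\cite[Lemma~4]{DH1999}, \cite[Lemma~10.1]{MP2014}) to the Hecke case by matching cuspidal unipotent radicals of $\Gamma_0'(\mathfrak{a}_i)$ against those of $\Gamma'\cap\Gamma(\mathfrak{a}_i)$ and controlling the intermediate quotients via divisors of $\mathfrak{a}_i$. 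Once all three hypotheses are verified, Theorem~\ref{gt.30} yields \eqref{gt.52b}. The explicit form for $m = 2\ell$ then follows from the standard evaluation $-t^{(2)}_{\bbH^3}(\varrho_{2\ell}) = (\ell(\ell+1) + \tfrac{1}{6})/\pi$, which is the same computation of the symmetric-power $L^2$-torsion on $\bbH^3$ used in Corollary~\ref{gt.50}; the reduction of the sum over $q$ even to $H^2_{\tor}$ is automatic since $H^0$ of a local system of free $\bbZ$-modules is torsion-free and $H^3_{\tor}(\bar X_i; L_m) = 0$ because $\bar X_i$ is a $3$-manifold with non-empty boundary.
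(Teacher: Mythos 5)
Your high-level route is the same as the paper's: apply Theorem~\ref{gt.30} to $\Gamma_i = \Gamma_0'(\mathfrak{a}_i) \subset \Gamma(D)$ with the self-dual module $\Lambda_m$ (Lemma~\ref{gt.47}), and reduce the sum over even $q$ to $H^2_{\tor}$ by noting $H^0_{\tor}$ vanishes. Where you diverge is in the verification of the hypotheses of Theorem~\ref{gt.30}: the paper simply cites \cite[Theorem~1.5 and the surrounding discussion]{MP2014}, whereas you attempt to re-derive them. Your arguments for $\ell(\Gamma_0'(\mathfrak{a}_i)) \to \infty$ (loxodromic elements of $\Gamma'$ have $c\neq 0$, so only finitely many $\mathfrak{a}_i$ divide $(c)$) and for \eqref{gt.31} (divisor bound on $\kappa(X_i)$ against polynomial growth of the index) are plausible and in the right spirit, although note that Remark~\ref{gt.37} is \emph{not} applicable here because parabolic elements at $\infty$ lie in every $\Gamma_0'(\mathfrak{a}_i)$, so \eqref{gt.31} must indeed be checked separately as you do.

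The genuine gap is the cusp-uniformity verification. You describe a ``plan'' to transport cusp-uniformity from the principal congruence sequence $\Gamma(\mathfrak{a}_i)\cap\Gamma'$ to $\Gamma_0'(\mathfrak{a}_i)$ using the sandwich $\Gamma(\mathfrak{a}_i)\cap\Gamma' \subset \Gamma_0'(\mathfrak{a}_i) \subset \Gamma'$, but this does not work directly: if $\Lambda_1 \subset \Lambda_2 \subset \Lambda_3$ are lattices in $\mathfrak{n}_P \cong \mathbb{R}^2$ with the unimodularization of $\Lambda_1$ bounded in moduli and $\Lambda_3$ fixed, the unimodularization of the intermediate lattice $\Lambda_2$ can still escape to infinity in $\SO(2)\backslash\SL(2,\mathbb{R})/\SL(2,\mathbb{Z})$ (e.g.\ $\Lambda_3 = \mathbb{Z}^2$, $\Lambda_2 = n\mathbb{Z}\oplus\mathbb{Z}$, $\Lambda_1 = n\mathbb{Z}^2$). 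So cusp-uniformity of Hecke-type congruence sequences has to be established by a direct analysis of the cusp lattices of $\Gamma_0(\mathfrak{a})$ at each cusp, which is exactly what is carried out in \cite{MP2014}; as written, your sketch does not substitute for that citation.
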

The above results  support the following conjecture.

\begin{conjecture} Let $d=2n+1$ and let $\Gamma\subset \SO_0(d,1)$ be an
arithmetic subgroup. Let $\{\Gamma_i\}$ be a cusp-uniform sequence of torsion 
free congruence subgroups of $\Gamma$ such that
$[\Gamma\colon\Gamma_i]\to\infty$ as $i\to\infty$. Put $X_i=\Gamma_i\backslash
{\mathbb H}^d$. Let $V_\Z$ be an arithmetic $\Gamma$-module
and let $E_\Z$ be the associated local system of free $\Z$-modules over $X_i$. 
Then
\[
\lim_{i\to\infty}\frac{\log|H^j_{\tor}(\bX_i;E_\Z)|}{[\Gamma\colon\Gamma_i]}=
\begin{cases}(-1)^n 2t^{(2)}_{\bbH^d}(\varrho) \vol(\Gamma\backslash\bbH^d),\;& 
j=n+1,\\0,\;& j\neq n+1.
\end{cases}
\]
\end{conjecture}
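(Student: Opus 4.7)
The plan is to build on Theorem~\ref{gt.30}, which already supplies a lower bound on the $\liminf$ of $\sum_{q+n\text{ odd}}\log|H^q_{\tor}(\bar X_i;E_\bbZ)|/[\Gamma:\Gamma_i]$. The conjecture demands three genuine strengthenings: upgrading the $\liminf$ to a genuine $\lim$, concentrating all the exponentially growing torsion in the single middle degree $j=n+1$, and matching the constant predicted by $L^2$-torsion with both parity-dual representations contributing (i.e.\ $t^{(2)}_{\bbH^d}(\varrho)+t^{(2)}_{\bbH^d}(\varrho^*)=2t^{(2)}_{\bbH^d}(\varrho)$ when $\varrho$ is self-dual).

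First I would establish a matching upper bound for the sum $\sum_{q+n\text{ odd}}\log|H^q_{\tor}(\bar X_i;E_\bbZ)|$. Following \cite{BV} and its finite-volume extensions in \cite{MP2014,Pfaff2014b}, one reduces to comparing the Reidemeister torsion of $\bar X_i$ with the regularized analytic torsion of $X_i$ via Cheeger--M\"uller/Bismut--Zhang. The analytic torsion is then computed by expanding the heat trace on $q$-forms through the Selberg/Arthur trace formula: the identity orbital integral produces the leading term $t^{(2)}_{\bbH^d}(\varrho)\vol(X_i)$, cusp-uniformity together with \eqref{gt.31} controls the unipotent and boundary contributions to $o([\Gamma:\Gamma_i])$, and $\ell(\Gamma_i)\to\infty$ together with the spectral gap provided by strong $L^2$-acyclicity handle the remaining hyperbolic and small-eigenvalue contributions. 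Combined with Theorem~\ref{gt.30}, this would upgrade the $\liminf$ to an actual $\lim$ for the parity-restricted sum.

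Next I would bound the Reidemeister regulator $R_i:=\prod_q|H^q(\bar X_i;E_\bbZ)_{\mathrm{free}}|^{(-1)^q}$ sub-exponentially in the index $[\Gamma:\Gamma_i]$. For congruence subgroups this should follow from Borel's stable cohomology computations combined with uniform bounds on the Eisenstein cohomology coming from the boundary tori. To then force the exponential growth into the single degree $j=n+1$, I would invoke the representation-theoretic vanishing of $(\gf,\kf)$-cohomology outside middle degree in the stable range when $\delta(\widetilde X)=1$ (Borel--Wallach), combined with Poincar\'e--Lefschetz duality on $\bar X_i$ and the vanishing of $L^2$-harmonic forms granted by strong $L^2$-acyclicity.

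The main obstacle will be this third step: a sub-exponential upper bound on the regulator and on the torsion in the ``wrong'' degrees is essentially open even in the cocompact setting, where it amounts to a sharp form of the Bergeron--Venkatesh conjecture. Without genuinely arithmetic input---periods of automorphic forms, congruences between automorphic representations, or Galois-theoretic control---the analytic machinery developed in this paper cannot by itself distinguish between degrees, which is precisely why the statement remains a conjecture at present.
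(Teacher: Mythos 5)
You have correctly identified that this statement is a conjecture; the paper offers no proof, only the one-sided bound of Theorem~\ref{gt.30} as supporting evidence (and, in dimension~$3$, the mismatching upper bound of Theorem~\ref{ub.4}). Your breakdown of what a proof would require --- a matching upper bound, concentration of the growing torsion in degree $n+1$, and subexponential control of the Reidemeister regulator and of the torsion in the remaining degrees --- is accurate, and you are right that the last two points are the genuine obstructions, open even in the cocompact case (Conjecture~1.3 of \cite{BV}).

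One concrete issue your sketch underplays: even the first step, replacing $\liminf$ by $\lim$ for the parity-restricted alternating sum, already fails with the machinery developed in this paper, because the estimate \eqref{gt.2c} loses a factor of~$2$. This is precisely why the lower bound in \eqref{gt.32} and the upper bound in \eqref{ub.4b} differ by a factor of~$2$, as the discussion at the end of Section~\ref{d3.0} makes explicit. Any road map toward the conjecture would therefore have to begin by sharpening \eqref{gt.2c}, i.e.\ by controlling the index $[H^q_{\free}(X_i(Y),\pa X_i(Y);\bE_{\bbZ}):\pa(H^{q-1}_{\free}(Z_i;\bE_{\bbZ}))]$ much more finely than the crude bound by $|H^q_{\tor}(X_i(Y);\bE_{\bbZ})|$ that the paper uses, before one could even hope to approach the degree-by-degree refinement or establish the existence of the limit.
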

If $\Gamma$ is cocompact, this is conjecture 1.3 in \cite{BV}.

The most interesting case is the case of trivial coefficients, i.e., 
$H^j_{\tor}(\Gamma_N;\Z)$. Our method does not cover this case. There are several
new problems that arise if the representation $\rho$ does not satisfy 
\eqref{int.3}. In particular, the interior cohomology does not necessarily
 vanish. To control the growth of the covolume of $H^j_{\free}(\Gamma_N;V_\Z)$ 
in this case turns out to be rather difficult. The case of a cocompact 
lattice in $\SL(2,\C)$ and trivial coefficients has been considered in 
\cite{BHV}.

Finally, in dimension 3, our method also allows us to obtain growth of torsion when the group $\Gamma\subset \Gamma(D)$ is fixed and we let $m$ go to infinity.
\begin{theorem}
Let $\Gamma\subset \Gamma(D)$ be a torsion-free finite index subgroup such that \eqref{int.2} holds for each $\Gamma$-cuspidal parabolic subgroup.  Let $X= \Gamma\setminus \SL(2,\bbC)/\SU(2)$ be the corresponding finite volume hyperbolic $3$-manifold.  Then 
$$
      \liminf_{m\to \infty} \frac{\log |H^2_{\tor}(X;L_m)|}{m^2}\ge \frac{\vol(X)}{2\pi} \quad \mbox{and} \quad \limsup_{m\to \infty} \frac{\log |H^2_{\tor}(X;L_m)|}{m^2}\le \frac{\vol(X)}{\pi}.$$
\label{sr.6}\end{theorem}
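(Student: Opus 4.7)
The approach follows the method used to prove Theorem~\ref{gt.30}, but with the roles of the subgroup index and the representation weight interchanged: here we fix $X=\Gamma\backslash\bbH^3$ and let $m\to\infty$. The main ingredients are: an asymptotic expansion of the regularized analytic torsion $T(X;L_m\otimes\bbC)$ as $m\to\infty$; the Cheeger--M\"uller-type equality on the Borel--Serre compactification $\bar X$ established in the proof of Theorem~\ref{gt.30}; and polynomial bounds on the torsion and free parts of $H^1(\bar X;L_m)$ that exploit that $X$ is held fixed.

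Adapting the methods of \cite{MP2013, MP2014} and of the present paper (which treat the opposite regime of varying $\Gamma$), I would first establish
\begin{equation*}
\log T(X;L_m\otimes\bbC) = t^{(2)}_{\bbH^3}(\varrho_m)\vol(X) + o(m^2), \qquad m\to\infty,
\end{equation*}
where the leading $L^2$-torsion coefficient satisfies $-t^{(2)}_{\bbH^3}(\varrho_m)=\frac{m^2}{4\pi}+O(m)$. The main technical work is to control the Eisenstein/continuous-spectrum contribution uniformly in $m$; since the set of cusps of $X$ is fixed and, for the symmetric power representations $\varrho_m=\Symm^m(\bbC^2)$, the relevant intertwining operators admit explicit expressions from standard Bianchi-group calculations, this contribution is of order at most $O(m\log m)$. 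Combining this expansion with the Cheeger--M\"uller equality on $\bar X$---and accounting for the factor of $2$ arising in the passage from the complex analytic torsion used here to the real Reidemeister torsion of the integral model---yields, schematically,
\begin{equation*}
\sum_i (-1)^{i+1}\log|H^i_{\tor}(\bar X;L_m)| = -2t^{(2)}_{\bbH^3}(\varrho_m)\vol(X) + \mathcal{R}_m + o(m^2),
\end{equation*}
where $\mathcal{R}_m$ is a regulator/cusp correction. Since $\dim_{\bbC}H^*(\bar X;L_m\otimes\bbC)=O(m)$ (cohomology is concentrated in the boundary tori), the regulator satisfies $|\mathcal{R}_m|=o(m^2)$ by a standard Minkowski-type argument. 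Using that $H^0_{\tor}=H^3_{\tor}=0$ and that $\log|H^1_{\tor}(\bar X;L_m)|=o(m^2)$ for fixed $X$ (by standard estimates on cuspidal torsion), the alternating sum reduces to $-\log|H^2_{\tor}|+o(m^2)$, yielding the liminf bound $\vol(X)/(2\pi)$, which matches the value predicted by the Bergeron--Venkatesh conjecture.

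The limsup bound $\vol(X)/\pi$ is a factor of $2$ weaker and follows from the same identity by replacing the two-sided equality on the regulator with a one-sided upper bound on $\log|H^2_{\tor}|$; it coincides with the upper bound of \cite{Raimbault2013} after accounting for the complex-vs-real $L^2$-torsion convention discussed in the remark following Corollary~\ref{gt.50}. The principal technical obstacle is the uniform-in-$m$ control of the continuous-spectrum contribution to the analytic torsion, which requires sharp asymptotics of the scattering matrix $C(s,\varrho_m)$ and its logarithmic derivative at the origin as the weight grows. A secondary but essential step is bounding $|H^1_{\tor}(\bar X;L_m)|$ polynomially in $m$ for fixed $X$: this is what allows the full $\log|H^2_{\tor}|$ (rather than half of it) to absorb the $L^2$-torsion contribution, giving the bound $\vol(X)/(2\pi)$ matching the conjecture rather than the naive $\vol(X)/(4\pi)$ that would result from the strategy of Theorem~\ref{gt.30} applied verbatim.
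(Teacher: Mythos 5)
Your high-level strategy matches the paper's: combine the Cheeger--M\"uller-type equality of Theorem~\ref{pr.6} with the asymptotics of the regularized analytic torsion $T(X;\bE_m)$ as $m\to\infty$ and with covolume/torsion estimates tied to the boundary. However, two of the steps you describe as routine would not go through as stated, and one of your structural claims is reversed. First, the assertion that $\log|H^1_{\tor}(\bX;L_m)|=o(m^2)$ follows ``by standard estimates on cuspidal torsion'' is not correct: the natural estimate coming from Lemma~\ref{ub.1} gives $|H_0(\bX;L_m)_{\tor}|\le (m!\,N(\ga))^{m+1}$, i.e.\ $\log|H_0(\bX;L_m)_{\tor}|=O(m^2\log m)$, which is \emph{not} $o(m^2)$ and is explicitly pointed out in the paper as insufficient. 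The paper instead identifies $H_0(\bX;L_m)_{\tor}\cong(\Lambda_m)_\Gamma$, localizes at the primes $\pf$ of $F$, and invokes \cite[Lemma~4.1, Proposition~4.2]{Marshall-Muller} to discard primes with $N(\pf)>m^2$ and bound the rest, yielding the $O(m\log m)$ estimate \eqref{bound-H0}. Second, your ``standard Minkowski-type argument'' for the regulator does not account for the central difficulty: the basis $\bmu_X$ of $H^*(X(Y);\bE_m)$ comes from Eisenstein series, and its comparison with the integral lattice is not directly accessible. The device the paper relies on is passage by Poincar\'e--Lefschetz duality to the relative torsion $\tau(X(Y),\pa X(Y),\bE_m,\bmu_{X,\pa X})$ whose distinguished basis is $\pa(\bmu_{\pa X})$ from harmonic boundary cohomology, and the covolume comparison then becomes the explicit computation of Proposition~\ref{sr.2} (integrating Kostant representatives such as $\iota_P(v_m\otimes d\bz)$ and $\iota_P(v_0\otimes dz)$ over cycles of the boundary tori), producing the $(m+1)!\,C^{m+1}$ factors. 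A generic lattice argument misses these; also, the relevant groups $H^q(T_P;E_{m,P})\cong H^q(\df{n}_P;V_m)$ are $O(1)$-dimensional by Kostant, not $O(m)$.

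Beyond these two gaps, your account of where the $H^1$-torsion bound matters is backwards, and the $\vol(X)/(4\pi)$ heuristic that motivates it is incorrect. In the paper the $\liminf$ bound $\vol(X)/(2\pi)$ is obtained \emph{without} any control of $H^1_{\tor}$: it comes directly from the one-sided inequality \eqref{gt.5} (which involves only the even-degree torsion), Proposition~\ref{sr.2}, the Cheeger--M\"uller relation of Theorem~\ref{pr.6}, and the self-duality $L_m^*\cong L_m$ of Lemma~\ref{gt.47}. The bound on $H^1_{\tor}\cong H_0(\cdot)_{\tor}$ is used only for the $\limsup$. Finally, the asymptotic of the analytic torsion as $m\to\infty$ that you propose to establish by ``adapting the methods of \cite{MP2013,MP2014}'' is precisely \cite[Theorem~1.1]{MP2012} and is cited directly; the uniform-in-$m$ control of the continuous spectrum is the content of that reference, not an obstacle to be re-derived here.
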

\begin{remark}
This is a generalization and a sharpening of the exponential growth of torsion obtained by Pfaff and Raimbault in \cite[Theorem~A]{Pfaff-Raimbault}.  Moreover, notice that using Poincaré-Lefschetz duality, our proof avoids dealing directly with Eisenstein series, so does not require estimating the denominator of the $C$-matrix, one of the difficult steps in the proof of \cite{Pfaff-Raimbault}.    Unfortunately, the upper and lower bounds in Theorem~\ref{sr.6} differ by a factor of $2$, so we cannot conclude that the limit exists.  
\label{int.8}\end{remark}

To prove these results, we proceed as in the cocompact case \cite{BV}. For our
approach we have to work with the self-dual bundle $\bE$. Let
$T(X;\bE,g_{X},h_E)$ be the regularized analytic torsion of $X$ and $\bE$ with
respect to the metrics $g_X$ on $X$ and $h_E$ in $E$,
which was introduced in \cite{MP2012}. For a sequence $\{\Gamma_i\}$ of 
subgroups of 
$\Gamma$, which satisfy the assumptions made in Theorem \ref{gt.30}, it follows 
from \cite[Theorem 1.1]{MP2014} that
\begin{equation}
   \lim_{i\to \infty} \frac{\log T(X_i;\bE,g_{X_i},h_E)}{[\Gamma: \Gamma_i]}= t^{(2)}_{\bbH^d}(\overline{\varrho})\vol(X)
   \label{gt.33a}\end{equation}
   where $t^{(2)}_{\bbH^d}(\overline{\varrho})$ is the `local' logarithm of the 
$L^2$-torsion appearing in \cite[(1.1)]{MP2014}. By \cite[Proposition~5.2]{BV}, we have $(-1)^n t^{(2)}_{\bbH^d}(\overline{\varrho})>0$. For $Y\gg 0$ let $X(Y)$ 
be the truncated manifold $X$ at level $Y$. Let $\tau(X(Y);\bE,\bmu_X)$ be the 
Reidemeister torsion of $X(Y)$ and $\bE$ with respect to the basis 
$\bmu_X$ of $H^*(X(Y),\bE)$ given by Eisenstein series \cite{Pfaff2017},
\cite{MR1}. Using the main result of \cite{MR1}, relating $T(X;\bE,g_{X},h_E)$
to $\tau(X(Y);\bE,\bmu_X)$, it follows that
\begin{equation}
   \lim_{i\to \infty} \frac{\log \tau(X_i(Y);\bE,\bmu_{X_i})}{[\Gamma: \Gamma_i]}= 
t^{(2)}_{\bbH^d}(\overline{\varrho})\vol(X).
   \label{gt.33b}\end{equation}
Now recall \cite[(1.4)]{Cheeger1979}, \cite{BV} that the Reidemeister torsion 
is related to the cohomology by
\begin{equation}
\tau(X_i(Y),\bE,\bmu_{X_i})^2= \prod_q \left( \frac{|H^q_{\tor}(X_i(Y);\bE_{\bbZ}|)}{\vol_{\bmu^{\bbR}_{X_i}}\left(H^q_{\free}(X_i(Y);\bE_{\bbZ})\right)} \right)^{(-1)^{q+1}},
\label{gt.2}\end{equation}
where $\bmu^{\bbR}_{X_i}= \{\bmu_{X_i}, \sqrt{-1}\bmu_{X_i}  \}$ is the real basis associated to the complex basis $\bmu_{X_i}$ and 
$\vol_{\bmu^{\bbR}_{X_i}}\left(H^q_{\free}(X_i(Y);\bE_{\bbZ})\right)$ is the covolume of the lattice $H^q_{\free}(X_i(Y);\bE_{\bbZ})$ with respect to the basis 
$\bmu^{\bbR}_{X_i}$. In contrast to the 
cocompact case, the cohomology $H^*(X_i(Y);\bE_\Z)$ never vanishes. It is 
spanned by Eisenstein cohomology classes. To estimate the growth of the 
covolumes of the lattices given by the free part of the cohomology, 
one needs to understand how the Eisenstein cohomology classes 
embed into the boundary cohomology, which is a delicate matter since this 
embedding is rational, but not integral. To overcome this difficulty, our 
strategy, inspired 
by the discussion in \cite[p.680]{Pfaff2017}, is to use Poincar\'e-Lefschetz 
duality \cite{Milnor1962} to replace the Reidemeister torsion 
$\tau(X_i(Y),\bE,\bmu_{X_i})$ by the Reidemeister torsion 
$\tau(X_i(Y),\pa X_i(Y),\bE,\bmu_{X_i,\pa X_i})$ of the relative complex. 
  One then has to deal with a map from boundary cohomology to relative 
cohomology for which covolumes of lattices of free integral cohomology groups 
can be easily estimated, see in particular \S~\ref{mr.0}.  Notice however that, compared to the cocompact case, our approach, through the estimate 
\eqref{gt.2c}, introduces the loss of a factor of $2$ in the exponential 
growth of torsion.

The paper is organized as follows.  In \S~\ref{gt.0}, we recall the results  of \cite{MR1} that we need.  In \S~\ref{co.0}, we derive estimates for the covolumes of lattices of free integral cohomology groups on the boundary of the Borel-Serre compactification.  This is used in \S~\ref{mr.0} to prove the main results.  In \S~\ref{d3.0}, we derive the specific applications of  Theorem~\ref{gt.30} in dimension 3.  Finally, in \S~\ref{sr.0}, we give a proof of Theorem~\ref{sr.6} for sequences of arithmetic $\Gamma$-modules.

\begin{acknowledgements}
The authors are grateful to the hospitality of  the Centre International de Rencontres Math\'ematiques (CIRM) where this project started.  The second author was supported by NSERC and a Canada Research Chair.  
\end{acknowledgements}

\section{Preliminaries} \label{gt.0}

For $d={2n+1}$ an odd natural number, consider either the groups $G:=\SO_o(d,1)$ and $K:=\SO(d)$ or $G:=\Spin(d,1)$ and $K=\Spin(d)$.  In either case, $K$ is a maximal compact subgroup of $G$ and there is a canonical identification $\bbH^d=G/ K$ with the hyperbolic space of dimension $d$.  In fact, if $\mathfrak{g}$ and $\mathfrak{k}$ are the Lie algebras of $G$ and $K$, if $\vartheta$ is the standard Cartan involution with respect to $K$, if $B$ is the Killing form of $\mathfrak{g}$ and if $\mathfrak{g}=\mathfrak{p}\oplus \mathfrak{k}$ is the Cartan decomposition, then the restriction of 
\begin{equation}
  \langle \xi_1, \xi_2\rangle_{\vartheta}:= -\frac{1}{2(d-1)}B(\xi_1,\vartheta(\xi_2)),  \quad \xi_1,\xi_2\in \mathfrak{g},
\label{pr.1}\end{equation}
 to $\mathfrak{p}$ induces a $G$-invariant metric $\bbH^d$ which is precisely the hyperbolic metric.  Let $\Gamma\subset G$ be a discrete torsion-free subgroup such that 
 $$
       X:=\Gamma\setminus \bbH^d= \Gamma\setminus G/K
 $$
is of finite volume with respect to the induced hyperbolic metric $g_X$.  Let $\varrho:G\to \GL(V)$ be a finite dimensional irreducible complex representation  such that 
\begin{equation}
  \varrho\circ \vartheta\ne \varrho.
\label{pr.2}\end{equation}
Recall from \cite[Lemma~4.3]{Muller1993} that the representation $\varrho$ is automatically unimodular.   Restricting $\varrho$ to $\Gamma$, we can define the flat complex vector bundle 
$$
       E:=\bbH^d\times_{\left. \varrho\right|_{\Gamma}} V\quad \mbox{on} \; X.
$$ 
Restricting $\varrho$ to $K$, we can instead define a homogeneous vector bundle 
$$
    \tE:= G\times_{\left. \varrho\right|_{K}}V \quad \mbox{on} \; G.
$$
By \cite[Lemma~3.1]{MM1963}, there is a $K$-invariant Hermitian product $\langle\cdot,\cdot\rangle$ on $V$, unique up to scaling, such that 
$$
      \langle \varrho(\xi)u,v\rangle = \langle u, \varrho(\xi)v\rangle \quad \forall \ \xi\in \mathfrak{p}, \quad \forall \ u,v\in V.  
$$
In particular, since it is $K$-invariant, it induces a natural Hermitian metric $h_{\tE}$ on $\tE$, as well as on the quotient $\Gamma\setminus \tE$ seen as a vector bundle on $X$, where the action of $\Gamma$ on $\tE$ is given by 
$$
      \gamma\cdot [g,v]_{\tE}= [\gamma g,v]_{\tE}
$$
with $[g,v]_{\tE}$ denoting the point of $\tE$ corresponding to the $K$-orbit of $(g,v)\in G\times V$.  By \cite[Proposition~3.1]{MM1963}, there is an explicit isomorphism between $\Gamma\setminus \tE$ and $E$ given by 
$$
    \begin{array}{llcl} \Phi: & \Gamma\setminus \tE &\to & E \\
            & [g,v]_{\Gamma\setminus \tE} & \mapsto & [g,\varrho(g)v]_E,
    
    \end{array}
$$
where $[g,v]_{\Gamma\setminus \tE}$ and $[g,v]_E$ denote the corresponding points in $\Gamma\setminus \tE$ and $E$ after passing to the quotient with respect to the actions of $K$ and $\Gamma$.  Thus, the Hermitian metric $h_{\Gamma\setminus \tE}$ on $\Gamma\setminus \tE$ and this isomorphism induce a natural Hermitian metric $h_E$ on $E$.  Notice however that since $\left. \varrho\right|_{\Gamma}$ is not unitary, the flat connection of $E$ is not compatible with $h_E$.

Let $G=NAK$ be the Iwasawa decomposition of $G$ as in \cite[\S~2]{MP2012} and let $M$ be the centralizer of $A$ in $K$.  Then $P_0:=NAM$ is a parabolic subgroup of $G$.  More generally, for any other parabolic subgroup $P$ of $G$, there exists $k_P\in K$ such that $P=N_PA_PM_P$ with $N_P= k_PN k_P^{-1}$, $A_P=k_PA k_P^{-1}$ and $M_P= k_P M k_P^{-1}$.  Recall that a parabolic subgroup $P$ of $G$ is \textbf{$\Gamma$-cuspidal} if $\Gamma\cap N_P$ is a lattice in $N_P$.  In this case, we denote this lattice  by 
$$
     \Gamma_P:=\Gamma\cap N_P.
$$ 
Restricting the representation $\varrho$ to $\Gamma_P$, we can define a flat vector bundle 
$$
     E_P= N_P\times_{\left.\varrho\right|_{\Gamma_P}} V
$$
on the quotient $T_P:= \Gamma_P\setminus N_P$.  Since $N_P$ is commutative, notice that the restriction of \eqref{pr.1} to $\mathfrak{p}$ induces a flat Riemannian metric $g_{T_P}$ on $T_P$.  On the other hand, considering the trivial vector bundle  $\tE_P= N_P\times V$ and its quotient $\Gamma_P\setminus \tE_P$ with respect to the action of $\Gamma_P$ on the base $N_P$, there is again an explicit isomorphism 
$$
    \begin{array}{llcl} \Phi_P: & \Gamma_P\setminus \tE_P &\to & E_P \\
            & [n,v]_{\Gamma_P\setminus \tE_P} & \mapsto & [n,\varrho(n)v]_{E_P},
    \end{array}
$$
so that the Hermitian product on $V$ induces a natural Hermitian metric $h_{E_P}$ on $E_P$.  

Let $\mathfrak{P}_{\Gamma}$ be a fixed set of representatives of the $\Gamma$-conjugacy classes of $\Gamma$-cuspidal parabolic subgroups.  Then $\mathfrak{P}_{\Gamma}$ is a finite set with cardinality corresponding to the number of cusps of $X$.  Suppose now that $\Gamma$ is such that for any $\Gamma$-cuspidal parabolic subgroup $P$,
\begin{equation}
  \Gamma\cap P= \Gamma\cap N_P= \Gamma_P.
\label{dc.3}\end{equation}
Let $\mathfrak{a}$ be the Lie algebra of $A$ and equip it with the norm induced by the restriction of \eqref{pr.1} to $\mathfrak{p}$.  Let $H_1\in \mathfrak{a}$ be the unique element of norm $1$ such that the positive restricted root involved in the choice of $N$ is positive on $H_1$.  Consider then the unique group isomorphism 
$$
      R: \bbR^+\to A
$$ 
whose differential at $t=1$ sends $1$ onto $H_1$.  More generally, denote by $R_P: \bbR^+\to A_P$ the group isomorphism given by $R_P(t):= k_P R(t) k_P^{-1}$ and set
$$
       A_P^0[Y];= R_P((Y,\infty))  \quad \mbox{for} \; Y>0.
$$ 
Then there exists $Y_0>0$ such that for $Y\ge Y_0$, there is a compact subset $C(Y)$ of $G$ and a decomposition 
$$
    G= \Gamma\cdot C(Y) \sqcup \bigsqcup_{P\in \mathfrak{P}_{\Gamma}} \Gamma\cdot N_P A^0_P[Y]K
$$
such that for each $P\in \mathfrak{P}_{\Gamma}$, 
$$
     \gamma\cdot N_PA^0_P[Y]K \cap N_PA^0_P[Y]K\ne \emptyset \; \Longleftrightarrow \; \gamma\in \Gamma_P.
$$
On $X$, this induces the decomposition 
$$
       X= X(Y)\sqcup \bigsqcup_{P\in \mathfrak{P}_{\Gamma}} F_P(Y)
$$
where 
$$
  F_P(Y):= \Gamma_P\setminus N_P\times A^0_P[Y]= T_P\times (Y,\infty)
$$
with the hyperbolic metric $g_X$ on $F_P(Y)$ given by 
\begin{equation}
     \frac{dt^2+ g_{T_P}}{t^2}, \quad t\in (Y,\infty).
\label{hm.1}\end{equation}
Moreover, there is an explicit isomorphism 
$$
      \begin{array}{llcl} \psi_P: & \left. E\right|_{F_P(Y)} &\to & \pr_1^*E_P \\
            & [nR_P(r),\varrho(nR_{P}(r))v]_{E} & \mapsto & [(n,r),\varrho(nR_P(r))v]_{\pr_1^*E_P},
    \end{array}
 $$
where $\pr_1: T_P\times A^0_P[Y]\to T_P$ is the projection on the first factor.  Thus, in the cusp end $F_P(Y)$, we can work directly with $\pr_1^*E_P$ instead of $E$, keeping in mind that the Hermitian metric is not quite the one of $E_P$ pull-back to $F_P(Y)$, but the one given by
$$
       h_E([(n,r),\varrho(nR_P(r))u]_{\pr_1^*E_P}, [(n,r),\varrho(nR_P(r))v]_{\pr_1^*E_P})= \langle u,v\rangle
 $$
in terms of the Hermitian product $\langle\cdot,\cdot\rangle$ on $V$.  Now, on $T_P$, Hodge theory  induces an identification between the cohomology group $H^q(T_P;E_P)$ and the space of harmonic forms $\cH^q(T_P;E_P)$ with respect to the Hodge Laplacian associated to the metric $g_{T_P}$ and the Hermitian metric $h_{E_P}$.  If $\mathfrak{n}_P$ denotes the Lie algebra of $N_P$, there is also a natural inclusion
\begin{equation}
     \begin{array}{lccc} \iota_P:  & \Lambda^q\mathfrak{n}_P^*\otimes V & \to & \Omega^q(T_P;E_P) \\
            & \omega\otimes v & \mapsto & \widehat{\omega}\otimes \widehat{v},
    \end{array}\label{dc.16}\end{equation}
where $\widehat{v}(\Gamma_Pn):= [n,\varrho(n)v]_{E_P}$ and $\left.\widehat{\omega}\right|_{\Gamma_P n}=\omega$ via the natural identification
$$
           \Lambda^q(T^*(T_P))= T_P\times \Lambda^q\mathfrak{n}^*_P.
$$
By van Est's theorem \cite{vE} (see also \cite[Lemma~2.6]{MR1}), the map \eqref{dc.16} induces an isomorphism between $H^q(T_P;E_P)$ and the cohomology group $H^q(\mathfrak{n}_P;V)$ of degree $q$ of the $V$-valued Lie algebra complex 
$\Lambda^*\mathfrak{n}_P^*\otimes V$ with differential
$$
      d_{\mathfrak{n}_P} \Phi(T_1,\ldots, T_{q+1})= \sum_{i=1}^{q+1} (-1)^{i+1}\varrho(T_i)\Phi(T_1,\ldots, \widehat{T}_i,\ldots, T_{q+1}),  \quad \Phi\in \Lambda^q\mathfrak{n}_P^*,
$$
where ``$\widehat{\; \; }$" above a variable denotes omission.  Using  the inner product \eqref{pr.1} and the Hermitian product $\langle\cdot,\cdot\rangle$ induces a natural Hermitian product on $\Lambda^q\mathfrak{n}^*_P\otimes V$. Hence, as in \cite{Kostant}, we can consider the adjoint $d^*_{\mathfrak{n}_P}: \Lambda^*\mathfrak{n}_P^*\otimes V\to 
\Lambda^{*-1}\mathfrak{n}_P^*\otimes V$ of $d_{\mathfrak{n}_P}$ and the corresponding de Rham and Hodge operators
$$
     K_P:= d_{\mathfrak{n}_P}+ d_{\mathfrak{n}_P}^*, \quad L_P:=d_{\mathfrak{n}_P}d^*_{\mathfrak{n}_P} + d^*_{\mathfrak{n}_P}d_{\mathfrak{n}_P}.
$$
Hodge theory in this setting identifies the Lie algebra cohomology $H^q(\mathfrak{n}_P;V)$ with the kernel of $L_P$,
$$
    \cH^q(\mathfrak{n}_P;V):=\{ \Phi\in \Lambda^q\mathfrak{n}_P^*\otimes V \; | \; L_P\Phi=0\}.
$$
 Thus, to summarize, we have the natural identifications
\begin{equation}
  H^q(T_P;E_P)\cong \cH^q(T_P;E_P)\cong H^q(\mathfrak{n}_P;V)\cong\cH^q(\mathfrak{n}_P;V).
\label{pr.4}\end{equation}
The first identification suggests to take a basis $\mu_{T_P}^q$ of $H^q(T_P;E_P)$ given by orthonormal harmonic forms.  Since 
$$
     H^q(\pa X(Y);E)\cong \bigoplus_{P\in \mathfrak{P}_{\Gamma}} H^q(T_P;E_P),
$$
this induces a corresponding basis 
$$ 
 \mu^q_{\pa X}:= \bigoplus_{P\in\mathfrak{P}_{\Gamma}} \mu^q_{T_P} 
$$
for $H^q(\pa X(Y);E).$   Now, we know from \cite{Pfaff2017} that $H^q(X(Y);E)=\{0\}$ for 
$q<n$ and $q=2n+1$, while otherwise the natural map $\iota_Y: \pa X(Y)\to X(Y)$ induces an inclusion in cohomology
$$
     \iota_Y^* : H^q(X(Y);E) \hookrightarrow H^q(\pa X(Y);E)
$$
which is an isomorphism for $n<q\le 2n$ and a strict inclusion for $q=n$.  Furthermore, in this latter case, as explained in \cite{Pfaff2017} and \cite{MR1}, there is an orthonormal decomposition 
\begin{equation}
  H^n(T_P;E_P)= H^n_+(T_P;E_P)\oplus H^n_-(T_P;E_P)
\label{pr.5}\end{equation}
and a corresponding orthonormal decomposition
\begin{equation}
  H^n(\pa X(Y);E)= H^n_+(\pa X(Y);E)\oplus H^n_-(\pa X(Y);E).
\label{pr.5a}\end{equation}
If $\pr_-: H^n(\pa X(Y);E)\to H^n_-(\pa X(Y);E)$ is the projection on the second factor, then we know from \cite{Pfaff2017} or \cite{MR1} that 
\begin{equation}
     \pr_-\circ \iota_Y^*: H^n(X(Y);E)\to H^n_-(\pa X(Y);E)
\label{pr.5b}\end{equation}
is an isomorphism.  Thus, assuming without loss of generality that $\mu^n_{\pa X}= (\mu^n_{+}, \mu^n_{-})$ with $\mu^n_{\pm}$ an orthonormal basis of $H^n_{\pm}(\pa X(Y);E)$,  we see that a natural choice of basis $\mu^q_X$ for $H^q(X(Y);E)$ is obtained by requiring that  
\begin{equation}
\iota^*_Y \mu^q_X= \mu^q_{\pa X} \quad  \mbox{for}\; n<q\le 2n \quad  \mbox{and}  \quad \pr_-\circ \iota^*_Y\mu_X^n= \mu_-^n \quad  \mbox{for} \; q=n.
\label{bc.1}\end{equation}  With this choice of basis, the following relation is obtained between the Reidemeister torsion $\tau(X(Y),E,\mu_X)$ of the flat vector bundle $E$ and the analytic torsion $T(X;E,g_X,h_E)$.

\begin{theorem}[\cite{MR1}]
Let $\Gamma$ be a torsion-free subgroups of $G$ such that \eqref{dc.3} holds and  $X=\Gamma\setminus G/ K$ is a finite volume hyperbolic manifolds.  Let $\varrho: G\to \GL(V)$ be a finite dimension irreducible complex representation satisfying \eqref{pr.2} and let $E$ be the associated flat vector bundle on $X$ with Hermitian metric $h_E$.  In this case,
$$
      \log T(X;E,g_X,h_E)= \log \tau(X(Y),E,\mu_X)- \frac12 \tau(\pa X(Y),E,\mu_{\pa X}) + \kappa(X) c_{\varrho},
$$
where $\kappa(X)=\# \mathfrak{P}_{\Gamma}$ is the number of cusps and $c_{\varrho}$ is a constant depending only on $\varrho$.  
\label{pr.6}\end{theorem}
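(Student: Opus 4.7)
The plan is to prove this result by comparing the regularized analytic torsion on the non-compact $X$ with the classical analytic and Reidemeister torsions on the truncated compact manifold $X(Y)$, and then exploiting the Cheeger--Brüning--Ma extension of the Cheeger--Müller theorem to manifolds with boundary.

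First, I would fix $Y \ge Y_0$ and work on the compact manifold with boundary $X(Y)$, equipped with the restriction of $g_X$ and $h_E$. The boundary $\partial X(Y) = \bigsqcup_P T_P$ has a product neighborhood identified with the cusp $F_P(Y)$, on which the hyperbolic metric takes the warped form \eqref{hm.1}. The flat bundle on the cusp is pulled back from $E_P$, so all geometric data near $\partial X(Y)$ are explicit. Since $\varrho \circ \vartheta \ne \varrho$, the Laplacians on $E$-valued forms on $X$ have a spectral gap at zero, which makes the regularization of the zeta function defining $T(X; E, g_X, h_E)$ well-defined and allows one to write this regularized torsion as the analytic torsion of $X(Y)$ with absolute boundary conditions plus an explicit correction coming from the half-infinite cusps. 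Computing this cusp correction is the first main step: on each $F_P(Y)$, one uses separation of variables along $(Y, \infty)$ combined with the Hodge theory of the flat cross-section $T_P$ described by \eqref{pr.4} to reduce to a one-dimensional spectral problem whose regularized determinant produces a contribution of the form $\log \tau(\partial X(Y), E, \mu_{\partial X})$ up to a constant depending only on $\varrho$, multiplied by the number of cusps.

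Second, I would invoke the Brüning--Ma anomaly formula (or its predecessors by Cheeger, Lück and Vishik) relating the analytic torsion of $X(Y)$ with absolute boundary conditions to the Reidemeister torsion $\tau(X(Y), E, \mu_X)$, with a boundary contribution of the form $\tfrac12 \log \tau(\partial X(Y), E, \mu_{\partial X})$ coming from Cheeger's boundary term, plus a local anomaly built from the Euler form and second fundamental form. Because the cross-sections $T_P$ are flat tori, the boundary is totally geodesic in the product model, so the local Brüning--Ma anomaly term reduces drastically. The choice of orthonormal harmonic bases $\mu^q_{T_P}$ on the boundary and the bases $\mu_X^q$ on $X(Y)$ determined by \eqref{bc.1} is exactly the one needed to make this identification clean; in particular, the orthogonal decompositions \eqref{pr.5} and \eqref{pr.5a}, together with the fact that $\pr_- \circ \iota_Y^*$ is an isomorphism, ensure that no extra volume-of-cohomology factor is lost when comparing the two sides.

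Finally, I would combine these two steps: the regularized analytic torsion equals analytic torsion on $X(Y)$ plus a cusp correction, and analytic torsion on $X(Y)$ equals Reidemeister torsion plus $\tfrac12 \log \tau(\partial X(Y))$ plus a local boundary anomaly. The local anomaly, being expressible through pointwise invariants of the product hyperbolic cusp geometry and the representation $\varrho$ at a single end, depends only on $\varrho$ and sums to $\kappa(X) c_\varrho$; a crucial consistency check is that the dependence on the truncation parameter $Y$ cancels between the cusp regularization and the boundary Reidemeister torsion, which one verifies by a direct computation of the variation in $Y$ of each term using the product structure \eqref{hm.1}. The main obstacle is carrying out this $Y$-independence in the presence of the small eigenvalues coming from the non-trivial boundary cohomology; this is handled by the careful decomposition \eqref{pr.5} into the $\pm$ pieces, which isolates the part of the spectrum with spectral gap from the small eigenvalues detected by the Eisenstein cohomology.
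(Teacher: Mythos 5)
This theorem is cited from \cite{MR1}; the paper under discussion does not prove it, so there is no internal proof to compare your sketch against. Judged on its own, your outline follows the expected blueprint for such a result -- relate the regularized torsion $T(X)$ to a Cheeger--M\"uller-type identity on the truncated $X(Y)$, and control the dependence on $Y$ -- but two of your steps are not sound as stated. First, the Br\"uning--Ma anomaly formula, and its predecessors by L\"uck and Vishik, is formulated for a Hermitian metric compatible with the flat connection. Here $h_E$ is explicitly \emph{not} flat-compatible, since $\left.\varrho\right|_{\Gamma}$ is not unitary; as the paper itself emphasizes, the flat connection on $E$ is not compatible with $h_E$. To invoke a boundary Cheeger--M\"uller theorem one must therefore pass through the unimodular version (in the spirit of \cite{Muller1993}) together with an anomaly formula tracking the dependence on the Hermitian structure. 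Asserting that the local anomaly ``reduces drastically'' merely because the cross-sections $T_P$ are flat tori does not engage with this.

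Second, and more seriously, your Step~1 claims that separation of variables along each cusp $F_P(Y)$ converts the difference between $T(X)$ and the analytic torsion of $X(Y)$ with absolute boundary conditions into a regularized one-dimensional determinant producing a $\log\tau(\partial X(Y),E,\mu_{\partial X})$ term plus a $\varrho$-dependent constant. That cannot be right as stated. The regularized torsion on the noncompact $X$ is built out of the continuous spectrum -- Eisenstein series and the scattering matrix -- whereas $X(Y)$ with absolute boundary conditions has purely discrete spectrum whose eigenvalues \emph{densify}, not converge, as $Y\to\infty$; matching the two is a genuine spectral degeneration problem, not a termwise cusp computation. Moreover, if a $\log\tau(\partial X(Y))$ contribution arose both in your Step~1 and again in the Br\"uning--Ma boundary term of your Step~2, the bookkeeping would not produce the single coefficient $-\tfrac12$ appearing in the theorem. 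You would need to identify the unique source of that term -- it is tied to the passage from the $L^2$-harmonic basis of $H^*(X(Y);E)$ to the Eisenstein-type basis $\mu_X$ fixed by \eqref{bc.1} via the decomposition \eqref{pr.5b} -- and verify signs and coefficients explicitly, which the sketch does not attempt.
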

\begin{remark}
The result of \cite{MR1} is slightly more general, since the assumption that \eqref{dc.3} holds can be relaxed when $G=\Spin(d,1)$, see \cite[Assumption~2.2]{MR1}. 
\end{remark}

\section{Estimates of covolumes}\label{co.0}

Let $\G$ be a quasi-split semi-simple algebraic group defined over $\Q$ such 
that $G:=\G(\R)$ is isomorphic to $\SO(d,1)$ or $\Spin(d,1)$ with $d=2n+1$. We
assume that $\G$ is not anisotropic. 
Let $\Gamma\subset \G(\Q)$ be an arithmetic subgroup. Then $\Gamma\subset 
G(\R)$ is not cocompact. Let ${\boldsymbol \varrho}$ be a rational representation of $\G$
on a $\Q$-vector space $V_\Q$. Let $V=V_\Q\otimes_\Q\C$. Let 
$\varrho\colon G\to \GL(V)$ be the restriction to $G$ 
of the representation ${\boldsymbol \varrho}_\C$ of $\G(\C)$ on $V$, induced by ${\boldsymbol \varrho}$. 
Then $\varrho$
is the direct sum of finitely many irreducible representations $\varrho_i$. We
we assume that each $\varrho_i$ satisfies \eqref{int.3}. The existence of such
rational representations is proved in \cite[sect. 8.1]{BV}. 

For the
convenience of the reader we recall the construction. Let $\T\subset\G$ be a
maximal torus. Let $F$ be a Galois extension of $\Q$ over which $\G$ splits. 
Let $X^\ast$ resp. $X_+^\ast$ be the character lattice of $\T_F=\T\otimes_\Q F$,
resp. the dominant characters of $\T_F$. Given $x\in X^\ast_+$, let $\varrho_x$
be the unique irreducible representation of $\G\times_\Q F$ with highest 
weight $x$ on a $F$-vector space $W_x$. Let $V_x:=\Res_{F/\Q}(W_x)$,
which means $W_x$ considered as a $\Q$-vector space. Let $\tilde\varrho_x\colon
\G\to\GL(V_x)$ be the $\Q$-rational representation, which is the 
composition of the representations $\G\to\Res_{F/\Q}\GL(W_x)$ and 
$\Res_{F/\Q}(W_x)\hookrightarrow \GL(\Res_{F/\Q}W_x)$. The highest weights of the
irreducible components $\varrho_i$ of the induced 
representation  of $\G(\C)$ on $V_x\otimes_\Q\C$ 
are obtained from $x$ by applying the various embeddings of $F$ into $\C$. 
In the lemma in section 8.1 of \cite{BV} it is proved that if $x$ is in the 
complement of the union of finitely many hyperplanes, then every 
$\varrho_i$ satisfies condition \eqref{int.3}. 

Let $V_\R$ denote $V$ seen as a real vector space. Since $\Gamma$ is 
arithmetic, there exists a $\Gamma$-invariant lattice $V_\Z$ in $V_\R$. 
We call the $\Gamma$-module $V_\Z$ which satisfies the above conditions 
 a {\it strongly $L^2$-acyclic arithmetic $\Gamma$-module}. 

Now, let $\{\Gamma_i\}_{i\in \bbN_0}$ be a  sequence of finite-index subgroups of $\Gamma$
\begin{equation}
 \Gamma_i\subset \Gamma_0:=\Gamma.  
\label{gt.7}\end{equation}   
Then for each $i$, $X_i:= \Gamma_i\setminus G/ K$ is a covering of $X= \Gamma\setminus G/K$.  We note that $X$ and $X_i$ are of  finite volume.  For $i>0$, we will also assume that $\Gamma_i$ is torsion-free , so that $X_i$ is a smooth hyperbolic manifold of finite volume, and that \eqref{dc.3} holds for $\Gamma_i$. However, we will not require that $\Gamma$ is torsion-free or satisfies \eqref{dc.3}, so that  $X_0$ may have  orbifold singularities.
Since $\Gamma$ preserves the $\bbZ$-module $V_{\bbZ}$, notice that $E=E_{\bbZ}\otimes_{\bbZ}\bbR$ for a natural smooth bundle of free $\bbZ$-modules 
\begin{equation}
   E_{\bbZ}:= \bbH^d\times_{\left.\varrho\right|_{\Gamma}} V_{\bbZ} \quad \mbox{on} \; X.
\label{gt.9}\end{equation}
To lighten the notation, we will also denote by $E= \bbH^d\times_{\left.\varrho\right|_{\Gamma_i}}V$ and $E_{\bbZ}=\bbH^d\times_{\left.\varrho\right|_{\Gamma_i}} V_{\bbZ}$ the corresponding bundles on $X_i$, trusting this shall lead to no confusion.  Alternatively, notice that these bundles can also be obtained by pulling back $E$ and $E_{\bbZ}$ under the covering map $X_i\to X$.  The cohomology group of cochains of degree $q$ with local coefficients in $E_{\bbZ}$ are then $\bbZ$-modules  admitting a decomposition into a free part
$H^q_{\free}(X_i;E_{\bbZ})$ and a torsion part $H^q_{\tor}(X_i;E_{\bbZ})$,
\begin{equation}
  H^q(X_i;E_{\bbZ})= H^q_{\free}(X_i;E_{\bbZ}) \oplus H^q_{\tor}(X_i;E_{\bbZ}),
  \label{gt.10}\end{equation} 
so that in particular $H^q(X_i;E)= H^q(X_i ;E_{\bbZ})\otimes_{\bbZ}\bbR= H^q_{\free}(X_i;E_{\bbZ})\otimes_{\bbZ}\bbR$.  If $H_q(X_i;E_{\bbZ})$ denotes the corresponding homology group of chains of degree $q$ with local coefficients in $E_{\bbZ}$, then we have a similar decomposition in terms of free and torsion parts,
\begin{equation}
  H_q(X_i;E_{\bbZ})= H_q(X_i;E_{\bbZ})_{\free} \oplus H_q(X_i;E_{\bbZ})_{\tor}.
  \label{gt.10a}\end{equation}
Recall from the universal coefficient theorem that there are natural isomorphisms
\begin{equation}
\begin{gathered}
H^q_{\free}(X_i;E_{\bbZ})\cong H_q(X_i;E_{\bbZ}^*)_{\free}, \\
H^q_{\tor}(X_i;E_{\bbZ})\cong H_{q-1}(X_i;E_{\bbZ}^*)_{\tor},
\end{gathered}
\label{gt.11}\end{equation}
where $E^*_{\bbZ}$ is the bundle of free $\bbZ$-modules dual to $E_{\bbZ}$.  To deal with self-dual bundles of free $\bbZ$-modules, we will consider the bundle of free $\bbZ$-modules 
\begin{equation}
 \bE_{\bbZ}:= E_{\bbZ}\oplus E_{\bbZ}^{*}
\label{gt.12}\end{equation}
with corresponding self-dual flat vector bundle $\bE:= E\oplus E^*$.

  For each $i\in \bbN_0$ and $P$ a $\Gamma$-cuspidal parabolic subgroup, set
$$
        \Gamma_{P,i}:= \Gamma_i\cap N_P\quad \mbox{and}  \quad T_{P,i}= \Gamma_{P,i}\setminus N_P
$$
with flat metric $g_{T_{P,i}}$ as in \eqref{hm.1}.  Recall that on $T_{P,0}=T_P$, we have the natural flat vector bundle $E_P= N_P\times_{\left.\varrho\right|_{\Gamma_P}}V$.  We can in a similar way define a bundle of free $\bbZ$-modules $E_{P,\bbZ}= N_P\times_{\left.\varrho\right|_{\Gamma_P}}V_{\bbZ}$.  To lighten the notation, for each $i\in \bbN$, we will also denote by $E_P:=N_P\times_{\left.\varrho\right|_{\Gamma_{P,i}}}V$ and $E_{P,\bbZ}= N_P\times_{\left.\varrho\right|_{\Gamma_{P,i}}}V_{\bbZ}$ the corresponding bundles over $T_{P,i}$.  Alternatively, notice that these bundles can be obtained by pulling back $E_P$ and $E_{P,\bbZ}$ under the natural covering map $\pi_i: T_{P,i}\to T_P$.  As in \eqref{gt.12}, we consider the corresponding self-dual bundles
\begin{equation}
   \bE_P:= E_P\oplus E_P^*, \quad \mbox{and}  \quad  \bE_{P,\bbZ}:= E_{P,\bbZ}\oplus E^*_{P,\bbZ}.  
\label{gt.16}\end{equation}  
As in \eqref{pr.4},  Hodge theory and the van Est's isomorphism induced by the map \eqref{dc.16} yield natural identifications
\begin{equation}
  H^q(T_{P,i}; \bE_P)\cong \cH^q(T_{P,i};\bE_P)\cong H^q(\mathfrak{n}_P;\bV)\cong \cH^q(\mathfrak{n}_P;\bV),  
\label{gt.17}\end{equation}
where $\bV=V\oplus V^*$ and $\cH^q(T_{P,i};\bE_P)$ is the space of harmonic forms of degree $q$.  In particular, notice that the dimension of $H^q(T_{P,i}; \bE_P)$ does not depend on $i\in \bbN_0$.  More importantly, via the identifications \eqref{gt.17}, the natural metric on  $\cH^q(T_{P,i};\bE_P)$ induces one on $H^q(T_{P,i}; \bE_P)$.  The space $\cH^q(\mathfrak{n}_P;\bV)$ also comes with a natural metric, but under the identification \eqref{gt.17}, it does not quite give the same metric as the one on $\cH^q(T_{P,i};\bE_P)$.  
Indeed, if $\langle\cdot, \cdot\rangle_{T_{P,i}}$ and $\langle\cdot, \cdot\rangle_{\mathfrak{n}_P}$ are the inner products on $\cH^q(T_{P,i};\bE_P)$ and $ \cH^q(\mathfrak{n}_P;\bV)$ respectively, then under the identification \eqref{gt.17}, we have that
\begin{equation}
   \langle\cdot, \cdot\rangle_{T_{P,i}}= \vol(T_{P,i}) \langle\cdot, \cdot\rangle_{\mathfrak{n}_P},
\label{gt.18}\end{equation}
where $\vol(T_{P,i})$ is the volume of $T_{P,i}$ with respect to the metric $g_{T_{P,i}}$.  Let us denote by $\vol(H^q_{\free}(T_{P,i};\bE_{P,\bbZ}))$ the covolume of the lattice 
$H^q_{\free}(T_{P,i};\bE_{P,\bbZ})$ in $H^q(T_{P,i};\bE_P)$ with respect to the inner product $\langle\cdot, \cdot\rangle_{T_{P,i}}$.
\begin{proposition}
For each $i\in \bbN_0$, 
$$
         \frac{1}{\vol(H^{n-q}_{\free}(T_{P};\bE_{P,\bbZ})) [\Gamma_{P}: \Gamma_{P,i}]^{\frac{b_{q}}2}}\le \vol(H^q_{\free}(T_{P,i};\bE_{P,\bbZ}))\le  \vol(H^q_{\free}(T_{P};\bE_{P,\bbZ}))[\Gamma_P: \Gamma_{P,i}]^{\frac{b_q}{2}},
$$
where $b_q:=\dim_{\bbR}\cH^q(\mathfrak{n}_P;\bV)$.
\label{gt.19}\end{proposition}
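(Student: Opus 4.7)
The approach rests on the van Est identification \eqref{gt.17}, which realizes $H^q(T_{P,i};\bE_P)$ for every $i$ on the same complex vector space $\cH^q(\mathfrak{n}_P;\bV)$. Under this identification the pullback $\pi_i^*$ along the covering $\pi_i:T_{P,i}\to T_P$ corresponds to the identity, so, since $\pi_i^*$ preserves the integral structures, both
\[
L_0:=H^q_{\free}(T_P;\bE_{P,\bbZ}),\qquad L_i:=H^q_{\free}(T_{P,i};\bE_{P,\bbZ})
\]
become full-rank $\bbZ$-lattices of rank $b_q$ in the real vector space underlying $\cH^q(\mathfrak{n}_P;\bV)$, with $L_0\subseteq L_i$. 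By \eqref{gt.18} one has $\langle\cdot,\cdot\rangle_{T_{P,i}}=\vol(T_{P,i})\langle\cdot,\cdot\rangle_{\mathfrak{n}_P}$, so rescaling between the $\mathfrak{n}_P$-metric and the $T_{P,i}$-metric changes covolumes by the factor $\vol(T_{P,i})^{b_q/2}$, and $\vol(T_{P,i})=[\Gamma_P:\Gamma_{P,i}]\vol(T_P)$ converts covolumes measured in $\langle\cdot,\cdot\rangle_{T_P}$ into those measured in $\langle\cdot,\cdot\rangle_{T_{P,i}}$ by the factor $[\Gamma_P:\Gamma_{P,i}]^{b_q/2}$.

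The upper bound is now immediate: the inclusion $L_0\subseteq L_i$ gives $\vol_{\mathfrak{n}_P}(L_i)\le\vol_{\mathfrak{n}_P}(L_0)$, and rescaling to the $T_{P,i}$-metric produces exactly
\[
\vol_{T_{P,i}}(L_i)\le[\Gamma_P:\Gamma_{P,i}]^{b_q/2}\,\vol_{T_P}(L_0).
\]

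For the lower bound I would invoke Poincar\'e--Lefschetz duality on the closed oriented $(d-1)$-dimensional flat torus $T_{P,i}$ with the self-dual local system $\bE_{P,\bbZ}=E_{P,\bbZ}\oplus E_{P,\bbZ}^{*}$: cup product together with integration against the fundamental class yields a perfect $\bbZ$-bilinear pairing
\[
L_i\otimes H^{d-1-q}_{\free}(T_{P,i};\bE_{P,\bbZ})\to\bbZ,
\]
which on harmonic representatives is $\int_{T_{P,i}}\alpha\wedge\beta$. Through the Hodge-star isometry $*:\cH^{d-1-q}(T_{P,i};\bE_P)\to\cH^{q}(T_{P,i};\bE_P)$ this integration pairing is matched with the Hermitian inner product $\langle\cdot,\cdot\rangle_{T_{P,i}}$, so $L_i$ and $H^{d-1-q}_{\free}(T_{P,i};\bE_{P,\bbZ})$ are dual lattices with respect to the real part of $\langle\cdot,\cdot\rangle_{T_{P,i}}$, and hence
\[
\vol_{T_{P,i}}(L_i)\cdot\vol_{T_{P,i}}\bigl(H^{d-1-q}_{\free}(T_{P,i};\bE_{P,\bbZ})\bigr)=1.
\]
Applying the upper bound already proved to the complementary degree $d-1-q$ and using $b_{d-1-q}=b_q$ (Poincar\'e duality on $T_P$), one recovers the stated lower bound (the superscript $n-q$ in the statement is to be read as $d-1-q=2n-q$).

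The main obstacle I anticipate is the Hodge-theoretic bookkeeping needed to make the identity $\vol_{T_{P,i}}(L_i)\cdot\vol_{T_{P,i}}(H^{d-1-q}_{\free}(T_{P,i};\bE_{P,\bbZ}))=1$ rigorous: one must carefully track the complex conjugation inherent in the identity $\int_{T_{P,i}}\alpha\wedge\overline{*\beta}=\langle\alpha,\beta\rangle_{T_{P,i}}$ and the anti-linear character of $\beta\mapsto\overline{*\beta}$ so as to convert the $\bbZ$-perfect Poincar\'e--Lefschetz pairing into genuine duality of real lattices with respect to the real inner product induced by the Hermitian metric.
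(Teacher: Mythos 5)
Your proposal is correct and follows essentially the same route as the paper: the upper bound comes from viewing $\pi_i^*$ as the identity under the van Est identification together with the metric rescaling \eqref{gt.18}, and the lower bound comes from the Poincar\'e(--Lefschetz) duality identity $\vol(L_i)\cdot\vol(H^{2n-q}_{\free}(T_{P,i};\bE_{P,\bbZ}))=1$ applied to the already-established upper bound in the complementary degree. You are also right that the superscript $n-q$ in the statement (and in the paper's proof) is a typo for $2n-q=d-1-q$; the caution you express about matching the $\bbZ$-perfect cup-product pairing with the Hermitian inner product via the Hodge star is appropriate but is precisely what the paper's appeal to ``an isometry'' is asserting, using self-duality of $\bE_{P,\bbZ}$.
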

\begin{proof}
Let $\pi_i: T_{P,i}\to T_P$ be the natural covering map.  Then $\pi_i^*(H^q_{\free}(T_P;\bE_{P,\bbZ}))$ is a sublattice of $H^q_{\free}(T_{P,i};\bE_{P,\bbZ})$.  Hence, we see that
\begin{equation}
\begin{aligned}
\vol(H^q_{\free}(T_{P,i};\bE_{P,\bbZ}))& \le \vol(\pi_i^* (H^q_{\free}(T_P;\bE_{P,\bbZ}))) = \left( \frac{\vol(T_{P,i})}{\vol(T_P)} \right)^{\frac{b_q}2}\vol(H_{\free}^q(T_{P};\bE_{P,\bbZ})), \\
                                                         &= [\Gamma_P: \Gamma_{P,i}]^{\frac{b_q}2}\vol(H_{\free}^q(T_{P};\bE_{P,\bbZ})),                                                      
\end{aligned}
\label{gt.20}\end{equation}
giving the inequality on the right.  For the inequality on the left, notice that Poincar\'e duality induces an isomorphism $H^{n-q}_{\free}(T_{P,i};\bE_{P,\bbZ})^*\cong H^q_{\free}(T_P;\bE_{P,\bbZ})$ and an isometry 
$$
H^{n-q}(T_{P,i};\bE_{P})^*\cong H^q(T_P;\bE_{P}),
$$
 so that $\vol(H^{n-q}_{\free}(T_{P,i};\bE_{P,\bbZ})^*)=\vol(H^q_{\free}(T_P;\bE_{P,\bbZ}))$.  Since essentially by definition
 $$
 \vol(H^{n-q}_{\free}(T_{P,i};\bE_{P,\bbZ})^*)=\vol(H^{n-q}_{\free}(T_{P,i};\bE_{P,\bbZ}))^{-1},
 $$ 
 we  see from \eqref{gt.20} that
$$
\vol(H^q_{\free}(T_{P,i};\bE_{P,\bbZ}))= \frac{1}{\vol(H^{n-q}_{\free}(T_{P,i};\bE_{P,\bbZ}))}\ge \frac{1}{[\Gamma_P: \Gamma_{P,i}]^{\frac{b_{n-q}}2}\vol(H_{\free}^{n-q}(T_{P};\bE_{P,\bbZ}))}.
$$
Since $b_{n-q}=b_q$ by Poincar\'e duality, this gives the left inequality.
\end{proof}

For $q=n$, recall that we have an orthogonal decomposition 
$$
      \cH^n(\mathfrak{n}_P,\bV)= \cH^n_+(\mathfrak{n}_P,\bV)\oplus \cH^n_-(\mathfrak{n}_P,\bV),
$$
which via the identifications \eqref{gt.17} induces a corresponding orthogonal decomposition
\begin{equation}
  H^n(T_{P,i};\bE_P)= H^n_+(T_{P,i};\bE_P)\oplus H^n_-(T_{P,i};\bE_P).
\label{gt.26}\end{equation}
\begin{lemma}
The restriction of $H^n_{\free}(T_{P,i};\bE_{P,\bbZ})$ to $H^n_{\pm}(T_{P,i};\bE_P)$ induces a lattice $L_{i,\pm}$ in $H^n_{\pm}(T_{P,i};\bE_P)$.
\label{gt.27}\end{lemma}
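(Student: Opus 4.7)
The plan is to reduce the lemma to the fact that the decomposition $H^n(T_{P,i};\bE_P)=H^n_+\oplus H^n_-$ is defined over $\bbQ$ with respect to the rational structure $H^n_{\free}(T_{P,i};\bE_{P,\bbZ})\otimes_{\bbZ}\bbQ$ on the complex cohomology. Once this is in hand, the conclusion follows from a standard observation: if $W=W_+\oplus W_-$ is a direct sum decomposition of a finite-dimensional complex vector space that is defined over $\bbQ$ with respect to a $\bbQ$-form $W_{\bbQ}=L\otimes_{\bbZ}\bbQ$ coming from a $\bbZ$-lattice $L\subset W$, then each $L\cap W_{\pm}=L\cap W_{\pm,\bbQ}$ is a sublattice of $W_{\pm}$ of full rank, the inclusion $(L\cap W_+)\oplus(L\cap W_-)\hookrightarrow L$ has finite index, and consequently $\pr_{\pm}L$ is commensurable with $L\cap W_{\pm}$ and therefore itself a lattice in $W_{\pm}$.

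To establish the $\bbQ$-rationality of the $\pm$ decomposition, I would first exploit the integral splitting $\bE_{P,\bbZ}=E_{P,\bbZ}\oplus E^*_{P,\bbZ}$ from \eqref{gt.16}, which decouples the problem into the analogous assertions for $E_P$ and $E_P^*$ separately, so only the case of $E_P$ needs attention. Via the van Est identification \eqref{gt.17}, the task becomes showing that the Kostant-type decomposition $\cH^n(\mathfrak{n}_P;V)=\cH^n_+\oplus\cH^n_-$ from \cite{MR1} is $\bbQ$-rational. Since $P$ is $\Gamma$-cuspidal with $\Gamma$ arithmetic, both $P$ and its Langlands decomposition $P=N_PA_PM_P$ are defined over $\bbQ$, with $A_P$ a $\bbQ$-split torus, and $V$ comes equipped with the rational form $V_{\bbQ}$ given by the arithmetic module $V_{\bbZ}$. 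Because the $A_P$-action on $\Lambda^*\mathfrak{n}_P^*(\bbQ)\otimes V_{\bbQ}$ is $\bbQ$-linear and commutes with the differential $d_{\mathfrak{n}_P}$, its weight decomposition of $H^n(\mathfrak{n}_P(\bbQ);V_{\bbQ})$—which by the description in \cite{MR1} is exactly the $\pm$ decomposition—is defined over $\bbQ$.

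The hard part, which I expect to require the most care, will be verifying that under the van Est identification the $\bbQ$-structure on $H^n(T_{P,i};E_P)$ coming from integral singular cohomology coincides with the rational structure transported from $H^n(\mathfrak{n}_P(\bbQ);V_{\bbQ})$. I would handle this by a period computation: $\bbQ$-rational cocycles in $\Lambda^*\mathfrak{n}_P^*(\bbQ)\otimes V_{\bbQ}$ produce, through the map $\iota_P$ of \eqref{dc.16}, translation-invariant $E_P$-valued differential forms on $T_{P,i}$ whose integrals against the $\bbZ$-basis of $H_n(T_{P,i};\bbZ)$ dual to the $\bbZ$-basis of $\mathfrak{n}_P$ take values in $V_{\bbQ}$; the standard equivalence between singular and de Rham cohomology with local coefficients then identifies the two $\bbQ$-structures. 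Once this compatibility is secured, the general projection-of-lattices argument from the first paragraph delivers the lemma.
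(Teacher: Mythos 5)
Your proposal follows the same conceptual route as the paper: both identify the $\pm$ decomposition of $H^n(\mathfrak{n}_P;\bV)$ with the decomposition into (groupings of) $A_P$-weight spaces, observe that the $A_P$-action is $\bbQ$-rational with respect to the integral structure coming from $V_\bbZ$, and then transport this rationality across the identifications in \eqref{gt.17}. The technical implementation differs. The paper quotes \cite{Pfaff2017} to know the weights $\lambda_{\varrho,n}^{\pm}$ are half-integers, picks the specific group element $R_P(4)$ (so that $(\Ad^*\otimes\varrho)(R_P(4))$ is a rational matrix with distinct positive integer eigenvalues on the $\pm$ summands), and concludes that the eigenprojections are polynomials in a rational matrix, hence rational; this sidesteps the mismatch between the normalized parametrization $R_P:\bbR^+\to A_P$ and the actual $\bbQ$-points of the torus. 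Your argument instead invokes the general principle that the weight decomposition of a $\bbQ$-split torus acting $\bbQ$-linearly on a $\bbQ$-vector space is defined over $\bbQ$. This is correct, but slightly hides the half-integer subtlety that the paper's choice of $R_P(4)$ makes explicit; you should make sure the torus you are decomposing with respect to is the actual $\bbQ$-torus $A_P$ (whose rational characters are integral), not the normalized one-parameter group $R_P$, whose values at rational arguments need not lie in $A_P(\bbQ)$.

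On the van Est compatibility: you correctly flag this as the point that needs care, and your period argument is the right idea; the paper simply asserts it when it says ``using the natural identification of \eqref{gt.17} induced by the map \eqref{dc.16}.'' Your sketch should, however, be a little more careful about what is being integrated against what: periods of $E_P$-valued $q$-forms are naturally paired with $q$-cycles with coefficients in $E^*_{P,\bbZ}$ (or computed via parallel transport in the flat bundle), not with classes in $H_q(T_{P,i};\bbZ)$. The explicit computations in the paper's \eqref{sr.4} and \eqref{lb.2}--\eqref{lb.3} for $d=3$ illustrate the correct form of such a period calculation and confirm that $\bbQ$-rational Lie-algebra cocycles produce cohomology classes in $H^q_{\free}(T_{P,i};E_{P,\bbZ})\otimes\bbQ$, which is exactly what your argument needs.

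Finally, one small thing you took as immediate deserves a word: your decoupling of $\bE_{P,\bbZ}=E_{P,\bbZ}\oplus E^*_{P,\bbZ}$ is justified because the $A_P$-action and the differential $d_{\mathfrak{n}_P}$ are block-diagonal with respect to $\bV=V\oplus V^*$, so the $\pm$ decomposition restricts to each summand; the paper works directly with $\bV$, but both routes are fine.
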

\begin{proof}
The decomposition of $H^n(\mathfrak{n}_P;\bV)$ is in terms of the eigenspaces of the action of $(\Ad^*\otimes\varrho)(R_P(t))$,
$$
     (\Ad^*\otimes\varrho)(R_P(t)) \Phi_{\pm}= t^{\lambda_{\varrho,n}^{\pm}-n}\Phi_{\pm} \quad \mbox{for} \quad \Phi_{\pm}\in \cH^n_{\pm}(\mathfrak{n}_P;V).
$$
But by \cite[(6.4), (6.6)]{Pfaff2017}, we know that the constants $\lambda_{\varrho,n}^{\pm}$ are integers or half-integers.  In particular, the eigenvalues of $(\Ad^*\otimes\varrho)(R_P( 4))$ are positive integers.  We know also that $(\Ad^*\otimes \varrho)(R_P(4))$ has rational coefficients.  Hence, the projections 
$$
    \pr_{\pm}: \cH^n(\mathfrak{n}_P;\bV)\to \cH^n_{\pm}(\mathfrak{n}_P;\bV)
$$
 are given by matrices with rational coefficients.  Using the natural identification of \eqref{gt.17} induced by the map \eqref{dc.16}, we see that the corresponding projections 
 $$
    \pr_{\pm}: H^n(T_{P,i};\bE_{P})  \to H^n_{\pm}(T_{P,i};\bE_{P}) 
 $$
 have rational coefficients, from which the result follows.  
\end{proof}
The covolumes $\vol(L_{i,\pm})$ of the lattices $L_{i,\pm}$ in $\cH^n_{\pm}(T_{P,i};\bE_{P})$  can be estimated as follows.

\begin{proposition}
For each $i\in \bbN_0$, 
$$
  \frac{1}{\vol(L_{0,\mp})[\Gamma_P:\Gamma_{P,i}]^{\frac{b_n}4}}\le \vol(L_{i,\pm})\le \vol(L_{0,\pm})[\Gamma_P: \Gamma_{P,i}]^{\frac{b_n}4}.
$$
\label{gt.28}\end{proposition}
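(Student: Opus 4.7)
My plan is to follow the same template as the proof of Proposition~\ref{gt.19}, enhanced to respect the spectral $\pm$ decomposition.

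First I would handle the upper bound. The key point is that the pullback along the covering $\pi_i\colon T_{P,i}\to T_P$ commutes with the spectral projections $\pr_{\pm}$, because Lemma~\ref{gt.27} defines these projections intrinsically via the $R_P$-action on the Lie algebra cohomology $\cH^n(\mathfrak{n}_P;\bV)$, and the van Est identification \eqref{gt.17} is natural under $\pi_i$. In particular, $\pi_i^*L_{0,\pm}$ is a sublattice of $L_{i,\pm}$. Combining \eqref{gt.18}, which scales the $L^2$ inner product on $\cH^n_{\pm}$ by $\vol(T_{P,i})/\vol(T_P)=[\Gamma_P:\Gamma_{P,i}]$, with the identity $\dim_{\bbR}\cH^n_{\pm}(\mathfrak{n}_P;\bV)=b_n/2$ (symmetric weights inside $\bV=V\oplus V^*$), I would conclude
\[
\vol(L_{i,\pm})\le \vol(\pi_i^*L_{0,\pm})=[\Gamma_P:\Gamma_{P,i}]^{b_n/4}\vol(L_{0,\pm}).
\]

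For the lower bound I would invoke Poincaré duality on $T_{P,i}$. Integration on $T_{P,i}$, together with the canonical pairing $\bV\otimes\bV\to\bbC$ coming from $\bE_P\cong \bE_P^*$, defines a perfect pairing on $H^n(T_{P,i};\bE_P)$ under which the integer lattice $H^n_{\free}(T_{P,i};\bE_{P,\bbZ})$ is self-dual. A weight analysis under $A_P$ --- using the $A_P$-invariance of the $\bV$-pairing and the fact that $\Lambda^{2n}\mathfrak{n}_P^*$ is a single $A_P$-weight space --- reveals that this pairing vanishes on each $H^n_{\pm}\times H^n_{\pm}$ and restricts to a perfect duality $H^n_{+}(T_{P,i};\bE_P)\times H^n_{-}(T_{P,i};\bE_P)\to\bbC$. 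Since Hodge $\star$ is an isometry in the middle degree, the self-duality of the integer lattice should translate into the reciprocal identity $\vol(L_{i,+})\vol(L_{i,-})=1$ at the level of real covolumes. Combining with the upper bound from the previous paragraph applied to $L_{i,\mp}$, this gives
\[
\vol(L_{i,\pm})=\frac{1}{\vol(L_{i,\mp})}\ge \frac{1}{\vol(L_{0,\mp})[\Gamma_P:\Gamma_{P,i}]^{b_n/4}},
\]
which is the claimed lower bound.

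The main obstacle is precisely this duality step. Lemma~\ref{gt.27} only provides \emph{rational} coefficients for $\pr_{\pm}$, so $L_{i,\pm}=\pr_{\pm}H^n_{\free}(T_{P,i};\bE_{P,\bbZ})$ generally strictly contains the intersection sublattice $H^n_{\free}(T_{P,i};\bE_{P,\bbZ})\cap H^n_{\pm}$. Because the Poincaré pairing of the full integer lattice with itself identifies the $\bbZ$-dual of $L_{i,+}$ with the intersection lattice on the $-$ side rather than with $L_{i,-}$ itself, extracting the clean reciprocity $\vol(L_{i,+})\vol(L_{i,-})=1$ requires carefully tracking the rescaling \eqref{gt.18} of the $L^2$ metric and verifying that the finite index between $L_{i,\mp}$ and the intersection lattice is absorbed by the metric normalization and by the naturality of the $\pm$ decomposition under $\pi_i^*$. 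This is the refined analogue, in the spectral $\pm$ setting, of the Poincaré duality step between $H^q_{\free}$ and $H^{n-q}_{\free}$ used in the proof of Proposition~\ref{gt.19}.
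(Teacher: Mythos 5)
Your upper bound argument agrees with the paper's: $\pi_i^*L_{0,\pm}$ is a sublattice of $L_{i,\pm}$ and the covolume scales by $[\Gamma_P:\Gamma_{P,i}]^{b_n/4}$ since $\dim_{\bbR}\cH^n_\pm=b_n/2$. The gap is in the lower bound. You aim for the identity $\vol(L_{i,+})\vol(L_{i,-})=1$, but this is not true in general, and the obstruction you flag at the end is a real one that cannot be cleaned up by ``tracking the rescaling.'' The point is that two different bilinear structures are in play: Poincar\'e duality makes the lattice $H^n_{\free}(T_{P,i};\bE_{P,\bbZ})$ self-dual with respect to the (indefinite) cup-product pairing, while covolumes are computed with respect to the (definite) $L^2$ metric from \eqref{gt.18}. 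For a lattice that is self-dual under the Poincar\'e pairing but does not split as the direct sum of its intersections with $H^n_\pm$, the product of the $L^2$-covolumes of those intersections is in general strictly greater than $1$. Your proposed ``weight analysis + Hodge $\star$ isometry'' argument would, at best, show that the Poincar\'e pairing restricted to $H^n_+\times H^n_-$ is perfect over $\bbR$; it does not give a perfect $\bbZ$-valued pairing between the integer lattices $L_{i,+}$ and $L_{i,-}$, which is exactly what would be needed for the covolume identity.

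The paper sidesteps this entirely and only proves the one-sided inequality $\vol(L_{i,+})\vol(L_{i,-})\ge 1$, which suffices. The steps are: (i) Poincar\'e duality on $T_{P,i}$ gives $(H^n_{\free})^*\cong H^n_{\free}$ and an isometry $(H^n)^*\cong H^n$, hence $\vol(H^n_{\free}(T_{P,i};\bE_{P,\bbZ}))=1$; (ii) $L_{i,+}+L_{i,-}$ is a full-rank \emph{sublattice} of $H^n_{\free}(T_{P,i};\bE_{P,\bbZ})$, so $\vol(L_{i,+}+L_{i,-})\ge 1$; (iii) since $H^n_\pm$ are orthogonal, $\vol(L_{i,+}+L_{i,-})=\vol(L_{i,+})\vol(L_{i,-})$. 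Then the lower bound on $\vol(L_{i,\pm})$ follows from the upper bound on $\vol(L_{i,\mp})$. Note that step (ii) forces the reading $L_{i,\pm}=H^n_{\free}\cap H^n_\pm$ in Lemma~\ref{gt.27} (``restriction'' means intersection, not the projection $\pr_\pm H^n_{\free}$ you write). If $L_{i,\pm}$ were the projections, the inclusion in (ii) would reverse and the product bound would come out as $\le 1$, the wrong direction. So the correct identification of $L_{i,\pm}$ is not a technicality you can absorb by rescaling, but the hinge of the whole argument.
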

\begin{proof}
For the inequality on the right, notice that the decomposition \eqref{gt.26} behaves well under pull-back, so that $\pi_i^*(L_{0,\pm})$ is a sublattice of $L_{i,\pm}$.  Hence,
\begin{equation}
\vol(L_{i,\pm}) \le \vol(\pi^*_i L_{0,\pm})= \left( \frac{vol(T_{P,i})}{\vol(T_P)}\right)^{\frac{b_n}4}\vol(L_{0,\pm})= [\Gamma_P:\Gamma_{P,i}]^{\frac{b_n}4}\vol(L_{0,\pm}),
\label{gt.29}\end{equation}
giving the desired inequality.  For the inequality on the left, notice that since Poincar\'e duality induces an isomorphism $(H^n_{\free}(T_{P,i};\bE_{P,\bbZ}))^*\cong H^n_{\free}(T_{P,i};\bE_{P,\bbZ})$ and an  isometry 
$$(H^n(T_{P,i};\bE_P))^*\cong H^n(T_{P,i};\bE_P),
$$
 we have that
$$
  \vol(H^n_{\free}(T_{P,i};\bE_{P,\bbZ}))= \vol((H^n_{\free}(T_{P,i};\bE_{P,\bbZ}))^*).
$$ 
Since on the other hand we have trivially that 
$$
\vol((H^n_{\free}(T_{P,i};\bE_{P,\bbZ}))^*)= \left(\vol(H^n_{\free}(T_{P,i};\bE_{P,\bbZ}))\right)^{-1},
$$ 
this means that
$$
      \vol(H^n_{\free}(T_{P,i};\bE_{P,\bbZ}))=1.   
$$
Now, $L_{i,+}+ L_{i,-}$ is a sublattice of $H^n_{\free}(T_{P,i};\bE_{P,\bbZ})$.  Hence,
\begin{equation}
   1=\vol(H^n_{\free}(T_{P,i};\bE_{P,\bbZ})) \le \vol(L_{i,+}+ L_{i,-})= \vol(L_{i,+})\vol(L_{i,-}),
\label{lb.45b}\end{equation}
where in the last step we have used that \eqref{gt.26} is an orthogonal decomposition.  Hence, combining this estimate with \eqref{gt.29} gives
the desired inequality
$$
    \vol(L_{i,\pm})\ge \frac{1}{\vol(L_{i,\mp})}\ge \frac{1}{\vol(L_{0,\mp})[\Gamma_P:\Gamma_{P,i}]^{\frac{b_n}4}}.
$$
\end{proof}

\section{Proof of the main result} \label{mr.0}

  Let $\bmu_{X_i}$ be a complex basis of $H^*(X_i(Y); \bE)$ obtained as in \eqref{bc.1}. Using Poincar\'e-Lefschetz duality \cite{Milnor1962} we have that 
$$
     \tau(X(Y),\bE,\bmu_{X_i})= \tau(X_i(Y),\pa X_i(Y),\bE,\bmu_{X_i,\pa X_i}),
$$
where $\bmu_{X_i,\pa X_i}$ is the basis of $H^*(X_i(Y),\pa X_i(Y);\bE)$ dual to $\bmu_{X_i}$.  The basis $\mu_{X_i,\pa X_i}$ has a much simpler description compared to $\bmu_{X_i}$, in particular it does not involve Eisenstein series.  Indeed, consider the long exact sequence 
\begin{equation}
\xymatrix{
     \cdots \ar[r]^-{\pa}  & H^q(X_i(Y),\pa X_i(Y);\bE)\ar[r] & H^q(X_i(Y);\bE) \ar[r] & H^q(\pa X_i(Y);\bE)\ar[r]^-{\pa} &    \cdots
 }   
\label{gt.1}\end{equation}
associated to the pair $(X_i(Y), \pa X_i(Y))$.
\begin{lemma}
In terms of the boundary homomorphism $\pa$ of the long exact sequence \eqref{gt.1},  $\bmu_{X_i,\pa X_i}= \pa ( \bmu_{\pa X_i})$, where $\bmu_{\pa X_i}$ is an orthonormal basis of $H^q(Z_i;\bE)\cong \bigoplus_{P\in\mathfrak{P}_{\Gamma_i}}\cH^q(T_P;\bE_P)$ (with respect to the metrics $g_{T_{P}}$ and $h_{E_P}$, where $Z_i= \pa X_i(Y)$)  for $0\le q<n$ and of 
$$\cH^q_+(Z_i;\bE)\cong\bigoplus_{P\in\mathfrak{P_{\Gamma_i}}} \cH^q_+(T_P;\bE_P)\subset H^q(Z_i;\bE)
$$ 
for $q=n$.
\label{pld.1}\end{lemma}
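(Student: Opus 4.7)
The proof proceeds in three logical steps: (i) use the cohomology structure recalled in \S\ref{gt.0} to show that the boundary homomorphism $\partial$ of \eqref{gt.1} is an isomorphism onto the appropriate target; (ii) invoke the standard adjunction between $\partial$ and the restriction $\iota_Y^*$ with respect to the Poincar\'e--Lefschetz pairing; (iii) use that, on the flat cusp tori comprising $Z_i$, Hodge theory identifies the Poincar\'e pairing with the $L^2$-pairing, while respecting the $\pm$-decomposition. Explicitly, for Step~(i), feed into \eqref{gt.1} the facts that $H^q(X_i(Y);\bE)=0$ for $q<n$ and $q=d$, that $\iota_Y^*:H^q(X_i(Y);\bE)\to H^q(Z_i;\bE)$ is an isomorphism for $n<q\le 2n$, and that $\pr_-\circ \iota_Y^*$ identifies $H^n(X_i(Y);\bE)$ with $\cH^n_-(Z_i;\bE)$. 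For $0\le q<n$, both $H^q(X_i(Y);\bE)=0$ and the injection $\iota_Y^*$ in degree $q+1$ force $\partial:H^q(Z_i;\bE)\to H^{q+1}(X_i(Y),Z_i;\bE)$ to be an isomorphism; for $q=n$, $\ker\partial$ is exactly $\cH^n_-(Z_i;\bE)$ and, using that $\iota_Y^*$ is an isomorphism in degree $n+1$, $\partial$ restricts to an isomorphism $\cH^n_+(Z_i;\bE)\xrightarrow{\sim} H^{n+1}(X_i(Y),Z_i;\bE)$. In particular $\partial(\bar{\mu}_{\partial X_i})$ is indeed a basis of the relevant relative cohomology group.

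For Step~(ii), $\bar{\mu}_{X_i,\partial X_i}$ is characterized as the basis dual to $\bar{\mu}_{X_i}$ under the Poincar\'e--Lefschetz pairing
\[
\langle\cdot,\cdot\rangle_{(X_i(Y),Z_i)}:H^{q+1}(X_i(Y),Z_i;\bE)\otimes H^{d-q-1}(X_i(Y);\bE)\longrightarrow \C,
\]
where the self-duality of $\bE$ is used to pair coefficients. The classical adjunction relating the connecting homomorphism and the restriction (\cite{Milnor1962}) asserts
\[
\langle \partial\gamma,\alpha\rangle_{(X_i(Y),Z_i)}=\pm\langle \gamma,\iota_Y^*\alpha\rangle_{Z_i},\qquad \gamma\in H^q(Z_i;\bE),\ \alpha\in H^{d-q-1}(X_i(Y);\bE),
\]
where $\langle\cdot,\cdot\rangle_{Z_i}$ is the Poincar\'e pairing on the closed $(d-1)$-manifold $Z_i$. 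Combined with Step~(i), this reduces the lemma to showing that $\bar{\mu}_{\partial X_i}$ is, on the boundary, Poincar\'e-dual to $\iota_Y^*\bar{\mu}_{X_i}$ (respectively to $\pr_-\iota_Y^*\bar{\mu}_{X_i}=\bar{\mu}^n_-$ when $q=n$).

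For Step~(iii), each component of $Z_i$ is a flat torus $T_{P,i}$, so Hodge theory and the self-duality of $\bE_P$ identify the Poincar\'e pairing on $H^*(T_{P,i};\bE_P)$ with the $L^2$-pairing of harmonic forms; hence any orthonormal basis of harmonic forms is Poincar\'e-dual (up to Hodge star) to another orthonormal basis. For $0\le q<n$ this immediately gives the desired identity, read as an equality modulo a unitary change of basis, which does not affect the Reidemeister torsion. For the remaining degree $q=n$, one additionally needs the orthogonality of the decomposition $\cH^n(Z_i;\bE)=\cH^n_+\oplus \cH^n_-$ with respect to the Poincar\'e pairing on $Z_i$, so that the Poincar\'e-dual of $\bar{\mu}^n_-\in \cH^n_-$ lies in $\cH^n_+$. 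This orthogonality follows from the $A_P$-equivariance of the Poincar\'e pairing together with the eigenvalue description $(\mathrm{Ad}^*\otimes\varrho)(R_P(t))\Phi_\pm=t^{\lambda_{\varrho,n}^\pm-n}\Phi_\pm$ and $\lambda_{\varrho,n}^+\ne \lambda_{\varrho,n}^-$, which forces the self-pairings of $\cH^n_\pm$ to vanish for weight reasons.

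The main technical obstacle is Step~(iii): namely, verifying carefully that the $\cH^n_\pm$ decomposition is orthogonal under the Poincar\'e pairing on $Z_i$ (and not merely under the $L^2$-pairing), and keeping track of the fact that since $\bar{\mu}_{\partial X_i}$ is only required to be \emph{an} orthonormal basis, the identity $\bar{\mu}_{X_i,\partial X_i}=\partial(\bar{\mu}_{\partial X_i})$ is to be understood up to a unitary change of basis, which nonetheless suffices for the Reidemeister torsion computation in the subsequent argument.
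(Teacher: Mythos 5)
Your proof is correct and follows essentially the same route as the paper's, which is far terser: it simply observes that the only delicate degree is $q=n$, and there invokes the isomorphism \eqref{pr.5b} together with the Poincar\'e-duality identification $(H^n_{\pm}(T_P;\bE_P))^*\cong H^n_{\mp}(T_P;\bE_P)$ --- exactly the content of your Steps (ii) and (iii), with Step (i) being the routine diagram chase in the long exact sequence. One small inaccuracy in Step (i): from $\pr_-\circ\iota^*_Y$ being an isomorphism you may only conclude that $\ker\partial=\Im\iota^*_Y$ is a \emph{complement} of $\cH^n_+(Z_i;\bE)$ in $H^n(Z_i;\bE)$ (a graph over $\cH^n_-$), not that it literally equals $\cH^n_-$; this is immaterial here, since all that your Steps (ii)--(iii) use is that $\partial$ restricts to an isomorphism on $\cH^n_+$ together with the orthogonality of $\cH^n_\pm$ under the Poincar\'e pairing, and for that orthogonality your weight argument should additionally invoke nondegeneracy of the Poincar\'e pairing (so that $\lambda^+_{\varrho,n}+\lambda^-_{\varrho,n}$ is forced to be the admissible total weight, whence $\lambda^\pm_{\varrho,n}\ne 0$ and the self-pairings of $\cH^n_\pm$ vanish).
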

\begin{proof}
The only delicate point is for $q=n$, in which case it suffices to use the isomorphism \eqref{pr.5b} and to notice that Poincar\'e duality induces an identification 
$$
       (H^n_{\pm}(T_P;\bE_P))^*\cong H^n_{\mp}(T_P;\bE_P).
$$
\end{proof} 
 
  Now, by \cite[(1.4)]{Cheeger1979}, we know that
\begin{equation}
\tau(X_i(Y),\pa X_i(Y),\bE,\bmu_{X_i,\pa X_i})^2= \prod_q \left( \frac{|H^q_{\tor}(X_i(Y), \pa X_i(Y);\bE_{\bbZ}|}{\vol_{\bmu^{\bbR}_{X_i,\pa X_i}}\left(H^q_{\free}(X_i(Y),\pa X_i(Y);\bE_{\bbZ})\right)} \right)^{(-1)^{q+1}},
\label{gt.2a}\end{equation}
where $\bmu^{\bbR}_{X_i,\pa X_i}= \{\bmu_{X_i,\pa X_i}, \sqrt{-1}\bmu_{X_i,\pa X_i}  \}$ is the real basis associated to the complex basis $\bmu_{X_i,\pa X_i}$ and $\vol_{\bmu^{\bbR}_{X_i,\pa X_i}}\left(H^q_{\free}(X_i(Y),\pa X_i(Y);\bE_{\bbZ})\right)$ is the covolume of the lattice 
$$
H^q_{\free}(X_i(Y),\pa X_i(Y);\bE_{\bbZ})
$$ 
with respect to the basis 
$\bmu^{\bbR}_{X_i,\pa X_i}$.  However, we have that 
\begin{equation}
     \vol_{\bmu^{\bbR}_{X_i,\pa X_i}}\left(H^q_{\free}(X_i(Y),\pa X_i(Y);\bE_{\bbZ})\right)= \frac{\vol_{\bmu^{\bbR}_{Z_i}}( H^{q-1}_{\free}(Z_i;\bE_{\bbZ}))}{[H^q_{\free}(X_i(Y),\pa X_i(Y);\bE_{\bbZ}): \pa\left( H^{q-1}_{\free}(Z_i;\bE_{\bbZ})\right)]}
\label{gt.2b}\end{equation}
with $H^{q-1}_{\free}(Z_i;\bE_{\bbZ})$ replaced with $H^{q-1}_{+}(Z_i;\bE)\cap H^{q-1}_{\free}(Z_i;\bE_{\bbZ})$  when $q=n+1$.   Moreover, we see from the lattice version of \eqref{gt.1} that 
\begin{equation}
\begin{aligned}
    1\le [H^q_{\free}(X_i(Y),\pa X_i(Y);\bE_{\bbZ}): \pa\left( H^{q-1}_{\free}(Z_i;\bE_{\bbZ})\right)] & \le |H^q_{\tor}(X_i(Y),\bE_{\bbZ})| \\
     &= |H^{2n+2-q}_{\tor}(X_i(Y),\pa X_i(Y);\bE_{\bbZ}|,
\end{aligned}    
\label{gt.2c}\end{equation}
where on the second line we have used the fact that 
\begin{equation}
H^q_{\tor}(X_i(Y), \bE_{\bbZ})\cong H_{q-1}(X_i(Y);\bE_{\bbZ})_{\tor}\cong H^{2n+2-q}_{\tor}(X_i(Y),\pa X_i(Y), \bE_{\bbZ}) 
\label{gt.3}\end{equation}
by the universal coefficient theorem and Poincar\'e-Lefschetz duality.  Hence, we see that 
\begin{equation}
\begin{aligned}
\frac{\vol_{\bmu_{Z_i}}( H^{q-1}_{\free}(Z_i;\bE_{\bbZ}))}{|H^{2n+2-q}_{\tor}(X_i(Y),\pa X_i(Y);\bE_{\bbZ})|}  & \le \vol_{\bmu^{\bbR}_{X_i,\pa X_i}}\left(H^q_{\free}(X_i(Y),\pa X_i(Y);\bE_{\bbZ})\right)   \\
& \le \vol_{\bmu_{Z_i}}( H^{q-1}_{\free}(Z_i;\bE_{\bbZ})).
\end{aligned}
\label{gt.4}\end{equation}
Using these inequalities in \eqref{gt.2}, as well as \eqref{gt.3} and the fact that 
$$
|H^q_{\tor}(X_i(Y),\pa X_i(Y), \bE_{\bbZ})|\ge 1,
$$
 we see that
\begin{equation}
\begin{aligned}
\tau(X_i(Y),\bE,\bmu_{X_i})^{2(-1)^n} & \le \frac{\left(\underset{q +n\; \operatorname{odd}}{\prod} |H^q_{\tor}(X_i(Y),\pa X_i(Y);\bE_{\bbZ}|^2  \right)}{\left( \prod_q \vol_{\bmu^{\bbR}_{Z_i}}( H^{q-1}_{\free}(Z_i;\bE_{\bbZ}))^{(-1)^{q+1}} \right)^{(-1)^n}} \\
&=  \left( \prod_q \vol_{\bmu^{\bbR}_{Z_i}}( H^{q-1}_{\free}(Z_i;\bE_{\bbZ}))^{(-1)^{q+n}} \right) \left(\prod_{q+n \; \operatorname{odd}} |H^q_{\tor}(X_i(Y);\bE_{\bbZ}|^2  \right),
\end{aligned}
\label{gt.5}\end{equation}
where again $H^{q-1}_{\free}(Z_i;\bE_{\bbZ})$ must be replaced with $H^{q-1}_{\free}(Z_i;\bE_{\bbZ})\cap H_+^{q-1}(Z_i;\bE)$ when $q=n+1$.

Our objective is now to estimate the growth of the size of $H^*_{\tor}(X_i;\bE_{\bbZ})$ as $i\to \infty$ using this inequality, Theorem~\ref{pr.6} and \cite{MP2014}.  However, we need first to impose further restrictions on the sequence $\{\Gamma_i\}$ and request that it is cusp-uniform in the sense of \cite{Raimbault} and \cite[Definition~8.2 and Lemma~8.3]{MP2014}.  Since this notion will play an important role in what follows, let us briefly recall it.  Thus, let $P$ be a $\Gamma$-cuspidal parabolic subgroup of $G$.  For each $i$, let 
\begin{equation}
  \Lambda_{P}(\Gamma_i):= \log(\Gamma_i\cap N_P)
\label{gt.13}\end{equation}  
be the lattice in $\mathfrak{n}_P$ associated to $\Gamma_i\cap N_P$ and let 
\begin{equation}
   \Lambda^0_P(\Gamma_i):= (\vol(\Lambda_P(\Gamma_i)))^{-\frac{1}{2n}} \Lambda_P(\Gamma_i)
\label{gt.14}\end{equation}
be the corresponding unimodular lattice.  Let us also denote by $\mathcal{P}(\mathfrak{n}_P)$ the set of isometry classes of unimodular lattices in $\mathfrak{n}_P$ equipped with the standard topology induced by the natural identification
$$
         \mathcal{P}(\mathfrak{n}_P)\cong \SO(2n)\setminus \SL(2n,\bbR)/\SL(2n,\bbZ).
$$
We can now formulate the restriction we impose on the sequence $\{\Gamma_i\}$.  
\begin{definition}
The sequence $\{\Gamma_i\}$ is \textbf{cusp-uniform} in the sense that for each $\Gamma$-cuspidal parabolic subgroup $P$, there is a compact set $\cK_P$ in $\mathcal{P}(\mathfrak{n}_P)$ such that the lattice $\Lambda^0_P(\Gamma_i)$ is contained in  $\cK_P$ for each $i\in\bbN_0$.  
\label{gt.15}\end{definition}

We are now ready to give a proof of our main result Theorem~\ref{gt.30}.  
  
\begin{proof}[Proof of Theorem~\ref{gt.30}]
We know from \cite[Theorem~{1.1}]{MP2014} that 
\begin{equation}
   \lim_{i\to \infty} \frac{\log T(X_i;\bE,g_{X_i},h_E)}{[\Gamma: \Gamma_i]}= t^{(2)}_{\bbH^d}(\overline{\varrho})\vol(X)
   \label{gt.33}\end{equation}
   where $t^{(2)}_{\bbH^d}(\overline{\varrho})$ is the `local' logarithm of the $L^2$-torsion appearing in \cite[(1.1)]{MP2014}.  By \cite[Proposition~5.2]{BV}, we know that 
   $$
              (-1)^n t^{(2)}_{\bbH^d}(\overline{\varrho})>0.
   $$
   On the other hand, combining Proposition~\ref{gt.19} and Proposition~\ref{gt.28} with \eqref{gt.31}, we see that 
   \begin{equation}
   \lim_{i\to \infty} \frac{\sum_q (-1)^{q+n}\log \vol_{\bmu^{\bbR}_{Z_i}}( H^{q-1}_{\free}(Z_i;\bE_{\bbZ}))}{[\Gamma:\Gamma_i]}=0.
   \label{gt.34}\end{equation}
   Similarly, we see from  \eqref{gt.31} that 
\begin{equation}
   \lim_{i\to \infty} \frac{\kappa(X_i)  c_{\overline{\varrho}} }{[\Gamma: \Gamma_i]}=0.
\label{gt.36}\end{equation}
Moreover, by Poincar\'e duality, Milnor duality and \cite[Proposition~1.12]{Muller1993}, we have that 
\begin{equation}
  \log \tau(Z_i,\bE,\bmu_{Z_i})=0.
\label{gt.35}\end{equation}
Hence, equation \eqref{gt.32} follows by combining \eqref{gt.5} with Theorem~\ref{pr.6} together with \eqref{gt.33}, \eqref{gt.34}, \eqref{gt.36} and \eqref{gt.35}.

If we assume furthermore that $E_{\bbZ}^* \cong E_{\bbZ}$, then we have that 
\begin{equation}
    |H^q_{\tor}(X_i(Y);\bE_{\bbZ})| =  |H^q_{\tor}(X_i(Y);E_{\bbZ}|^2.
\label{gt.35b}\end{equation}
On the other hand, if we also assume that $\varrho$ is self-dual, which by \cite[\S~3.2.5]{GW} is automatic when $n$ is odd, we then know that there is a canonical isomorphism $E\cong E^*$ which is an isomorphism of flat vector bundles and of Hermitian vector bundles at the same time.  This implies in particular that 
\begin{equation}
      t^{(2)}_{\bbH^d}(\overline{\varrho})= 2 t^{(2)}_{\bbH^d}(\varrho).
\label{gt.35c}\end{equation}
Hence, combining \eqref{gt.32} with \eqref{gt.35b} and \eqref{gt.35c} gives \eqref{gt.32b}.
\end{proof}

\section{Applications in dimension 3} \label{d3.0}

If $n=1$ and $G=\Spin(3,1)\cong\SL(2,\bbC)$, we can find many situations where Theorem~\ref{gt.30} applies.  Recall that if $\varrho: G\to \GL(V)$ with $V=\bbC^2$ is the standard representation, then we can consider the $(m+1)$-dimensional complex representation 
\begin{equation}
\varrho_m: \SL(2,\bbC) \to \GL(V_m) \quad \mbox{with} \quad  V_m:= \operatorname{Symm}^m(V).
\label{sp.1}\end{equation} 
This representation is irreducible, both as a complex and a real representation.  Furthermore, if $(\varrho_m)_{\bbR}$ is $\varrho_m$ seen as a real representation, then for $m\in \bbN$, $(\rho_m)_{\bbR}$  gives a complete list of the finite dimensional real representations of $\SL(2,\bbC)$. Now, if we let $e_1=\left( \begin{array}{c}  1 \\ 0   \end{array} \right)$ and $e_2=\left( \begin{array}{c}  0 \\ 1   \end{array} \right)$ be the standard basis of $\bbC^2$, then 
$$
      v_j= e_1^j e_2^{m-j}, \quad j\in \{0,1,\ldots,m\},
$$
is a basis of $V_m$.  Using the natural non-degenerate complex linear pairing pairing   $\langle \cdot | \cdot\rangle: \bbC^2\otimes \bbC^2 \to \bbC$ defined by
\begin{equation}
            \langle  \left( \begin{array}{c}  a \\ b   \end{array} \right) | \left( \begin{array}{c}  c \\ d   \end{array} \right)\rangle:= ac + bd,
\label{gt.40}\end{equation}
we can identify $V^*$ with $V$ as a complex vector space,
\begin{equation}
          \sharp: V^* \to V.
\label{gt.41}\end{equation}
Under this identification, the basis $\{e_1,e_2\}$ is self-dual.  The identification \eqref{gt.41} also induces an identification 
\begin{equation}
\sharp: V_m^*\to V_m.
\label{gt.41b}\end{equation}  
Under this identification, the basis $\{v_j\}$ of $V_m$ is self-dual.  Under the identification \eqref{gt.41}, the dual representation $\varrho^*$ is simply given by 
$$
         \varrho(M)^*= (\varrho(M)^{-1})^T.  
$$
Thus, if $M= \left( \begin{array}{cc} a & b \\ c & d \end{array} \right)$, then a direct computation shows that 
\begin{equation}
      \varrho^*(M)= \left( \begin{array}{cc} d & -c \\ -b & a \end{array} \right)= \left( \begin{array}{cc} 0 & -1 \\ 1 & 0 \end{array} \right) \left( \begin{array}{cc} a & b \\ c & d \end{array} \right)\left( \begin{array}{cc} 0 & 1 \\ -1 & 0 \end{array} \right).
\label{gt.42}\end{equation}
In other words, making the identification \eqref{gt.41} as complex vector spaces, the element 
$$
A:= \left( \begin{array}{cc} 0 & -1 \\ 1 & 0 \end{array} \right)\in \SL(2,\bbC)
$$ 
induces an isomorphism of complex representations $A: V\to V^*$ such that $\varrho^*(M)= A\varrho(M) A^{-1}$.  Similarly, making the identification \eqref{gt.41b}, the element $A$  induces an isomorphism of complex representations 
\begin{equation}
   \varrho_m(A): V_m\to V_m^*,  \quad \varrho^*_m(M)= \varrho_m(A)\varrho_m(M)\varrho_m(A^{-1}).
\label{gt.43}\end{equation} 

Now, let $F:= \bbQ(\sqrt{-D})$ be an imaginary quadratic number field, where $D\in\bbN$ is square-free.  Let $\cO_D$ be its ring of integers, so that 
\begin{equation}
      \cO_D= \left\{  \begin{array}{ll}  \bbZ + \sqrt{-D}\bbZ,  & \mbox{if} \; D\equiv 1,2 \mod(4), \\
                                                              \bbZ+ \left( \frac{1+ \sqrt{-D}}2 \right)\bbZ, & \mbox{if} \; D\equiv 3 \mod(4).  \end{array}  \right.
\label{gt.44}\end{equation}
Let $\Gamma(D):= \SL(2,\cO_D)$ be the associated Bianchi group.  In $V_m$, a natural lattice preserved by $\Gamma(D)$ is given by 
\begin{equation}
      \Lambda_m:= \operatorname{Sym}^m \cO^2_D,
\label{gt.45}\end{equation}
so that, as a real vector space, $V_m= \Lambda_m\otimes_{\bbZ}\bbR$.  From this point of view, the representation $(\varrho_m)_{\bbR}$ descends to a representation over $\bbQ$ on 
$(V_m)_{\bbQ}:= \Lambda_m\otimes_{\bbZ} \bbQ$ obtained by restricting $\varrho_m$ to $\SL(2,\bbC)_{\bbQ}:= \SL(2,\bbQ(\sqrt{-D}))$.  If $\langle\cdot, \cdot\rangle: V_m^*\otimes V_m\to \bbC$ denotes the canonical non-degenerate pairing, then recall that the lattice $\Lambda_m^*\subset V_m^*$ dual to $\Lambda_m$ is given by 
$$
        \Lambda_m^*= \{  \eta\in V_m^* \; | \; \Re \langle \eta, \nu\rangle \in \bbZ \quad \forall \nu \in \Lambda_m\}.  
$$
Under the identification \eqref{gt.41b}, notice that we have a natural identification
\begin{equation}
\Lambda_m^*\cong \frac{\Lambda_m}{\delta_D} \quad \mbox{where} \quad \delta_D= \left\{  \begin{array}{ll}  \sqrt{-D},  & \mbox{if} \; D\equiv 1,2 \mod(4), \\
                                                               \frac{\sqrt{-D}}2, & \mbox{if} \; D\equiv 3 \mod(4).  \end{array}  \right.
\label{gt.45b}\end{equation}
Hence, the isomorphism of representations \eqref{gt.43} does not  identify $\Lambda_m$ with $\Lambda_m^*$, but the isomorphism of representations 
\begin{equation}
   \frac{\varrho_m(A)}{\delta_D}: V_m \to V_m^*
\label{gt.46}\end{equation}
does.  In fact, it induces an isomorphism of $\bbZ_{\Gamma(D)}$-modules, yielding the following result. 

\begin{lemma} 
Let $\Gamma\subset \Gamma(D)$ be any torsion-free subgroup so that $X=\Gamma\setminus \SL(2,\bbC)/\SU(2)$ is a finite volume hyperbolic manifold.  If 
$L_m= \bbH^3\times_{\left. \varrho_m\right|_{\Gamma}} \Lambda_m$ is the bundle of free $\bbZ$-modules associated to the lattice $\Lambda_m$, then there is a natural isomorphism $L^*_m\cong L_m$.  
\label{gt.47}\end{lemma}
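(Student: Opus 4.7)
The plan is to assemble the pieces that have already been set up in the paragraph preceding the statement: the intertwining isomorphism \eqref{gt.43} given by $\varrho_m(A)$, the identification \eqref{gt.45b} of $\Lambda_m^*$ with $\Lambda_m/\delta_D$, and the fact that $A \in \SL(2, \cO_D)$ preserves $\Lambda_m$. The whole proof reduces to observing that the rescaling by $1/\delta_D$ in \eqref{gt.46} is precisely what is needed to promote the complex isomorphism of representations to one of $\bbZ$-lattices, hence an isomorphism of associated bundles.

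First, I would recall from \eqref{gt.43} that $\varrho_m(A) \colon V_m \to V_m^*$ is an isomorphism of complex representations of $\SL(2,\bbC)$, i.e. $\varrho_m^*(M)\circ \varrho_m(A) = \varrho_m(A)\circ \varrho_m(M)$ for every $M \in \SL(2,\bbC)$. Multiplying by the nonzero scalar $1/\delta_D$, the map $\varrho_m(A)/\delta_D$ of \eqref{gt.46} retains this equivariance, so in particular it is $\Gamma$-equivariant for any subgroup $\Gamma \subset \Gamma(D)$.

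Next, I would check that this rescaled map sends $\Lambda_m$ bijectively to $\Lambda_m^*$. Since $A = \bigl(\begin{smallmatrix} 0 & -1 \\ 1 & 0\end{smallmatrix}\bigr) \in \SL(2,\cO_D) = \Gamma(D)$, the operator $\varrho_m(A)$ preserves the $\Gamma(D)$-invariant lattice $\Lambda_m \subset V_m$, so $\varrho_m(A)(\Lambda_m) = \Lambda_m$. Transporting through the identification \eqref{gt.41b}, formula \eqref{gt.45b} says that $\Lambda_m^* \subset V_m^*$ corresponds to $\Lambda_m/\delta_D \subset V_m$. Hence $\varrho_m(A)/\delta_D$ maps $\Lambda_m$ onto $\Lambda_m/\delta_D$, i.e.\ onto $\Lambda_m^*$, as an isomorphism of $\bbZ$-modules.

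Combining these two steps, $\varrho_m(A)/\delta_D$ is a $\Gamma$-equivariant isomorphism between the $\Gamma$-modules $\Lambda_m$ and $\Lambda_m^*$. By the associated bundle construction, it descends to a canonical isomorphism
\[
L_m = \bbH^3 \times_{\left.\varrho_m\right|_{\Gamma}} \Lambda_m \;\xrightarrow{\;\sim\;}\; \bbH^3 \times_{\left.\varrho_m^*\right|_{\Gamma}} \Lambda_m^* = L_m^*
\]
of bundles of free $\bbZ$-modules on $X$. The only slightly delicate point, and the step I would write out most carefully, is the bookkeeping for the identification \eqref{gt.45b}, which is where the denominator $\delta_D$ enters and makes the map land exactly on the integral dual $\Lambda_m^*$ rather than on a commensurable lattice; everything else is formal.
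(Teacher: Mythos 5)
Your proof is correct and follows exactly the paper's line of reasoning: the paper establishes Lemma~\ref{gt.47} precisely by observing that the rescaled intertwiner $\varrho_m(A)/\delta_D$ of \eqref{gt.46} is a $\Gamma(D)$-equivariant isomorphism sending $\Lambda_m$ onto $\Lambda_m^*$, and hence descends to an isomorphism of associated bundles. Your bookkeeping with the identification \eqref{gt.41b} and the computation $\Lambda_m^* \cong \Lambda_m/\delta_D$ is exactly the content of \eqref{gt.45b}, so there is no gap.
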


Now, let $\ga$ be any nonzero ideal in $\cO_D$ and let $N(\ga)$ denote its norm.  Then the associated congruence subgroup $\Gamma(\ga)$ is defined by 
\begin{equation}
\Gamma(\ga):= \left\{ \left( \begin{array}{cc} a & b \\ c & d \end{array} \right)\in \Gamma(D) \; | \;  a-1\in \ga, \ d-1\in \ga, \ b,c\in \ga \right\},
\label{gt.48}\end{equation}
and the associated Hecke subgroup is defined by
\begin{equation}
\Gamma_0(\ga):= \left\{ \left( \begin{array}{cc} a & b \\ c & d \end{array} \right)\in \Gamma(D) \; | \;  \ c\in \ga \right\}.
\label{gt.49}\end{equation}
As explained in \cite[p.2779]{MP2014}, provided the norm $N(\ga)$ is sufficiently large, then $\Gamma(\ga)$ is torsion-free and \eqref{dc.3} holds, which allows us to prove Corollary~\ref{gt.50}.
\begin{proof}[Proof of Corollary~\ref{gt.50}]
The norm $N(\ga_i)$ is required to be sufficiently large to ensure that $\Gamma(\ga_i)$ is torsion-free and satisfies \eqref{dc.3}.  By Lemma~\ref{gt.47}, $L_m^*\cong L_m$, and by the discussion around \cite[Corollary~1.4]{MP2014}, the sequence $\Gamma(\ga_i)$ satisfies all the hypotheses of Theorem~\ref{gt.30}, so the result follows by noticing that 
$$
       |H^0_{\tor}(X_i(Y);L_{m})|=1
 $$
 thanks to the universal coefficient theorem.  Furthermore, when $m=2\ell$ is even, we know from \cite[(5.23)]{MP2013} that 
 $$
 -t^{(2)}_{\bbH^3}(\varrho_{2\ell})= \frac{\ell(\ell+1)+ \frac16}{\pi},
 $$
giving \eqref{int.7}.
\end{proof}

On the other hand, the Hecke subgroups $\Gamma_0(\ga)$ are never torsion-free and \eqref{dc.3} never holds.  However, taking a finite index subgroup $\Gamma'\subset \Gamma(D)$ which is torsion-free and for which \eqref{dc.3} holds, we may consider the subgroup
\begin{equation}
  \Gamma_0'(\ga):= \Gamma_0(\ga) \cap \Gamma',
\label{gt.51}\end{equation} 
which is torsion-free and satisfies \eqref{dc.3}. We can now prove Corollary~\ref{gt.52}. 
\begin{proof}[Proof of Corollary~\ref{gt.52}]
By Lemma~\ref{gt.47} and \cite[Theorem~1.5 and the surrounding discussion]{MP2014},  all the hypothesis of Theorem~\ref{gt.30} are satisfied and the result follows by noticing that 
$$
 |H^0_{\tor}(X_i(Y);L_{m})|=1
 $$
 thanks to the universal coefficient theorem. 
\end{proof}

For sequences of principle congruence subgroups, we can obtain an upper bound on the growth of torsion in cohomology using the approach of Raimbault \cite{Raimbault2013}.  We need first to establish the following two lemmas, \cf \cite[Lemma~6.5]{Raimbault2013} for the first lemma.

\begin{lemma}[Raimbault]  If $\mathfrak{a}$ is a non-zero ideal of $\cO_D$, then 
$$
       (m! \mathfrak{a}) V_m \subset (\Gamma(\mathfrak{a})-\Id)V_m.
$$
\label{ub.1}\end{lemma}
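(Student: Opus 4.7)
The starting point is that $\Gamma(\mathfrak{a})$ contains all upper and lower unipotent elements $u_+(\alpha) = \left(\begin{smallmatrix} 1 & \alpha \\ 0 & 1\end{smallmatrix}\right)$ and $u_-(\alpha) = \left(\begin{smallmatrix} 1 & 0 \\ \alpha & 1\end{smallmatrix}\right)$ for $\alpha \in \mathfrak{a}$. On the standard monomial basis $v_j = e_1^j e_2^{m-j}$ of $V_m$, a direct computation gives the explicit binomial expansion
$$ (\varrho_m(u_+(\alpha)) - \Id) v_j = \sum_{k=1}^{m-j}\binom{m-j}{k} \alpha^k v_{j+k}, $$
with a symmetric formula for $u_-(\alpha)$ moving in the opposite direction. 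The strictly triangular shape of these operators with respect to the filtration $v_m, v_{m-1}, \ldots, v_0$ is what drives the argument.

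The plan is to prove by induction on $k \in \{0, 1, \ldots, m-1\}$ the sharper statement that $(k+1)!\,\alpha\, v_{m-k} \in M := (\Gamma(\mathfrak{a}) - \Id) V_m$ for every $\alpha \in \mathfrak{a}$. The base case $k=0$ is the identity $\alpha v_m = (\varrho_m(u_+(\alpha)) - \Id) v_{m-1}$. For the inductive step, I would apply $k!\cdot(\varrho_m(u_+(\alpha)) - \Id)$ to $v_{m-k-1}$ and extract the leading term, rewriting
$$ (k+1)!\,\alpha v_{m-k} = k!(\varrho_m(u_+(\alpha)) - \Id) v_{m-k-1} - \sum_{\ell=2}^{k+1} k!\binom{k+1}{\ell}\, \alpha^\ell\, v_{m-(k+1-\ell)}. $$
Each term in the tail sits at an index $i = k+1-\ell \le k-1$ to which the inductive hypothesis applies (with $\alpha^\ell \in \mathfrak{a}$ playing the role of the new $\alpha$), and the whole tail is absorbed into $M$ provided the numerical divisibility $(k+2-\ell)! \mid k!\binom{k+1}{\ell}$ holds.

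Once the induction is established, applying $\varrho_m(u_-(\alpha)) - \Id$ to $v_1$ gives $\alpha v_0 \in M$, which handles the bottom end of the basis. Since $(m-\ell+1)! \mid m!$ for every $\ell \in \{0, 1, \ldots, m\}$, one concludes $m!\,\alpha v_\ell \in M$ for all $\ell$, and linearity then yields the desired inclusion $(m! \mathfrak{a}) V_m \subset M$. The only technical obstacle is the arithmetic divisibility $(k+2-\ell)! \mid k!\binom{k+1}{\ell}$ used in the inductive step; but this reduces immediately to the observation that $k!/(k+2-\ell)!$ is the product of the $\ell-2$ consecutive integers $k, k-1, \ldots, k+3-\ell$ (the empty product when $\ell=2$), hence is an integer, making the whole expression an integer multiple of $\binom{k+1}{\ell}$ and allowing the induction to close.
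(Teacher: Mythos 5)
Your proof is correct and is essentially the same argument as the paper's, just run from the opposite end of the basis: the paper applies the lower unipotent $\eta^0_a$ to $v_k$ and inducts upward on $k$ to reach $(k!)\,a v_{k-1}$, handling $v_m$ separately with $\eta^\infty_a$, whereas you apply the upper unipotent $u_+(\alpha)$ to $v_{m-k-1}$ and induct to reach $(k+1)!\,\alpha v_{m-k}$, handling $v_0$ separately with $u_-(\alpha)$. The reindexing $j \leftrightarrow m-j$ carries one argument into the other, and the divisibility bookkeeping is the same.
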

\begin{proof}
Consider the matrices 
$$
A_0:= \left(\begin{array}{cc} 0 & 0 \\ 1 & 0 \end{array}  \right) \quad \mbox{and} \quad A_{\infty}:= \left(\begin{array}{cc} 0 & 1 \\ 0 & 0 \end{array}\right),
$$
so that for all $a\in \mathfrak{a}$, 
$$
   \eta_a^0:= \Id + aA_0\in \Gamma(\mathfrak{a}), \quad \mbox{and} \quad \eta^{\infty}_a:=\Id + aA_{\infty} \in \Gamma(\mathfrak{a}).
$$
A direct computation shows that 
\begin{equation}
   (\eta^0_a-\Id)v_1= a v_0 \quad \mbox{and} \quad (\eta^{\infty}_a-\Id)v_{m-1}=av_m.
\label{ub.2}\end{equation}
More generally, one computes that 
$$
      (\eta^0_a-\Id)v_k= \sum_{j=0}^{k-1}  \left( \begin{array}{c} k \\ j \end{array} \right) a^{k-j}v_j,
$$
so that 
\begin{equation}
         kav_{k-1}= (\eta^0_a-\Id)v_k- \sum_{j=0}^{k-2}  \left( \begin{array}{c} k \\ j \end{array} \right) a^{k-j}v_j.
\label{ub.3}\end{equation}
Hence, using \eqref{ub.2} and \eqref{ub.3}, one can show by induction on $k$ that 
$$
     (k!) av_{k-1}\in (\Gamma(\mathfrak{a})-\Id)V_m \quad \forall k\in \{1,\ldots,m\},  \quad \forall a\in \mathfrak{a}.
$$
Since $av_m\in (\Gamma(\mathfrak{a})-\Id)V_m$ by \eqref{ub.2}, this completes the proof.  
\end{proof}

\begin{lemma}
If $\mathfrak{a}$ is a non-zero ideal of $\mathcal{O}_D$, then
$$
       N(\mathfrak{a})\le [\Gamma(D): \Gamma(\mathfrak{a})].
$$
\label{ei.1}\end{lemma}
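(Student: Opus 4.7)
The strategy is to realize $\Gamma(\mathfrak{a})$ as the kernel of the natural reduction-mod-$\mathfrak{a}$ homomorphism and then exhibit $N(\mathfrak{a})$ elements in $\Gamma(D)$ whose images in $\SL(2,\mathcal{O}_D/\mathfrak{a})$ are pairwise distinct.

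More precisely, the quotient map $\mathcal{O}_D \to \mathcal{O}_D/\mathfrak{a}$ induces a group homomorphism
\[
\pi_{\mathfrak{a}} : \SL(2,\mathcal{O}_D) \longrightarrow \SL(2,\mathcal{O}_D/\mathfrak{a}),
\]
and from the defining condition \eqref{gt.48} one sees that $\ker \pi_{\mathfrak{a}} = \Gamma(\mathfrak{a})$. Hence
\[
[\Gamma(D): \Gamma(\mathfrak{a})] \;=\; |\pi_{\mathfrak{a}}(\SL(2,\mathcal{O}_D))|.
\]
To bound this from below, I would use the unipotent one-parameter family
\[
U_b := \begin{pmatrix} 1 & b \\ 0 & 1 \end{pmatrix}, \qquad b \in \mathcal{O}_D,
\]
each of which lies in $\SL(2,\mathcal{O}_D)=\Gamma(D)$. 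Two such matrices $U_b$ and $U_{b'}$ have the same image under $\pi_{\mathfrak{a}}$ if and only if $b-b' \in \mathfrak{a}$, so they produce $|\mathcal{O}_D/\mathfrak{a}| = N(\mathfrak{a})$ distinct elements in $\pi_{\mathfrak{a}}(\Gamma(D))$, giving the desired inequality.

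There is no real obstacle here; the only minor point to be careful about is citing (or recalling) that $|\mathcal{O}_D/\mathfrak{a}| = N(\mathfrak{a})$ by the very definition of the norm of an ideal in a Dedekind domain of class number considerations aside. The argument is completely elementary and requires no input from the preceding sections.
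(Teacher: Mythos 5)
Your proof is correct, and it takes a genuinely different — and more elementary — route than the paper's. The paper invokes the explicit index formula
\[
[\Gamma(D):\Gamma(\mathfrak{a})] = N(\mathfrak{a})^3 \prod_{\mathfrak{p}\mid\mathfrak{a}}\bigl(1-N(\mathfrak{p})^{-2}\bigr)
\]
(cited from \cite[(11.1)]{MP2014}), and then bounds the product over primes: writing $\mathfrak{a}=\prod_i\mathfrak{p}_i^{r_i}$ gives $N(\mathfrak{a})\ge\prod_{\mathfrak{p}\mid\mathfrak{a}}N(\mathfrak{p})$, hence $\prod_{\mathfrak{p}\mid\mathfrak{a}}\frac{N(\mathfrak{p})^2}{N(\mathfrak{p})^2-1}\le\prod_{\mathfrak{p}\mid\mathfrak{a}}N(\mathfrak{p})^2\le N(\mathfrak{a})^2$, so that $[\Gamma(D):\Gamma(\mathfrak{a})]\ge N(\mathfrak{a})^3/N(\mathfrak{a})^2=N(\mathfrak{a})$. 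Your argument avoids the index formula entirely: you identify $\Gamma(\mathfrak{a})$ as the kernel of the reduction map $\pi_{\mathfrak{a}}:\SL(2,\mathcal{O}_D)\to\SL(2,\mathcal{O}_D/\mathfrak{a})$ and then produce $N(\mathfrak{a})=|\mathcal{O}_D/\mathfrak{a}|$ distinct cosets via the upper-triangular unipotents $U_b$, $b\in\mathcal{O}_D$. This is self-contained and requires no external citation; what it gives up is the finer quantitative information carried by the explicit formula — which the paper does not actually need for this lemma, but which is at hand in their setup anyway. Either proof is fine; yours is arguably cleaner for a stand-alone statement. (The stray phrase ``of class number considerations aside'' at the end of your writeup should be deleted — the norm $N(\mathfrak{a})=|\mathcal{O}_D/\mathfrak{a}|$ by definition, no class-number input enters.)
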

\begin{proof}
Let $\displaystyle \mathfrak{a}= \prod_i \mathfrak{p}_i^{r_i}$,  $r_i\ge 1,$ be the factorization into prime ideals.  Then
$$
    N(\mathfrak{a})= \prod_i N(\mathfrak{p}_i)^{r_i}\ge \prod_{\mathfrak{p}|\mathfrak{a}} N(\mathfrak{p}),
$$
so that 
$$
  \prod_{\mathfrak{p}|\mathfrak{a}} \frac{1}{(1-\frac{1}{N(\mathfrak{p})^2})}= \prod_{\mathfrak{p}|\mathfrak{a}} \frac{N(\mathfrak{p})^2}{N(\mathfrak{p})^2-1}\le \prod_{\mathfrak{p}|\mathfrak{a}} N(\mathfrak{p})^2\le N(\mathfrak{a})^2.  
 $$
 Since 
 $$
      [\Gamma(D): \Gamma(\mathfrak{a})]= N(\mathfrak{a})^3 \prod_{\mathfrak{p}|\mathfrak{a}} \left( 1-\frac{1}{N(\mathfrak{p})^2} \right)
 $$
 by \cite[(11.1)]{MP2014}, the result follows.  
\end{proof}

\begin{theorem}(Raimbault \cite{Raimbault2013})\label{th-ub}
If $\mathfrak{a}_i$ is a sequence of non-zero ideals of $\cO_D$ such that $\displaystyle \lim_{i\to\infty} N(\mathfrak{a}_i)=\infty$, then for any $m\in \bbN$, for $X_D=\Gamma(D)\setminus \bbH^3$, $X_i=\Gamma(\mathfrak{a}_i)\setminus \bbH^3$ and for $L_m$ the bundle of free $\bbZ$-modules induced by $\varrho_m$ and the lattice $\Lambda_m$, we have that
\begin{equation}
\lim_{i\to \infty}\sup \frac{\log |H^2_{\tor}(\bX_i;L_m)|}{[\Gamma(D):\Gamma(\mathfrak{a}_i)]}\le -2t^{(2)}_{\bbH^3}(\varrho_m)\vol(X_D).
\label{ub.4b}\end{equation}
\label{ub.4}\end{theorem}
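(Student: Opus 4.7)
The plan is to adapt Raimbault's original argument \cite{Raimbault2013} within the framework of this paper. First, I would apply Theorem~\ref{pr.6} to the irreducible representation $\varrho_m$ (which satisfies \eqref{pr.2} for $m \ge 1$) together with \cite[Theorem~1.1]{MP2014}; combined with Milnor duality on the flat boundary tori, giving $\log\tau(\pa X_i(Y), L_m, \mu_{\pa X_i}) = 0$, and with the fact that $\kappa(X_i)/[\Gamma(D):\Gamma(\mathfrak{a}_i)] \to 0$ (Remark~\ref{gt.37}), this produces
\[
\lim_{i\to\infty}\frac{\log\tau(X_i(Y), L_m, \mu_{X_i})}{[\Gamma(D):\Gamma(\mathfrak{a}_i)]} = t^{(2)}_{\bbH^3}(\varrho_m)\vol(X_D).
\]

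Next, invoking Poincar\'e-Lefschetz duality $\tau(X_i(Y), L_m, \mu_{X_i}) = \tau(X_i(Y), \pa X_i(Y), L_m, \mu_{X_i,\pa X_i})$ (permissible via Lemma~\ref{gt.47}), I would apply the Cheeger identity \eqref{gt.2a} together with the duality \eqref{gt.3}. The vanishings $H^0(X_i(Y); L_m) = V_m^{\Gamma(\mathfrak{a}_i)} = 0$ for $m \ge 1$ and $H^3(X_i(Y); L_m) = 0$ imply that the only torsion contributions in the relative complex are $H^2_{\tor}(X_i(Y), \pa X_i(Y); L_m) \cong H^2_{\tor}(\bar X_i; L_m)$ and $H^3_{\tor}(X_i(Y), \pa X_i(Y); L_m) \cong H^1_{\tor}(X_i(Y); L_m)$, with nonzero free rank only in degrees $1$ and $2$. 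Solving for $\log|H^2_{\tor}(\bar X_i; L_m)|$ then yields an expression of the form $-2\log\tau + \log|H^1_{\tor}(X_i(Y); L_m)|$ plus the log-covolume difference $\log\vol(H^2_{\free}(X_i(Y),\pa X_i(Y); L_m)) - \log\vol(H^1_{\free}(X_i(Y),\pa X_i(Y); L_m))$.

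The last step is to verify that each of these error terms is $o([\Gamma(D):\Gamma(\mathfrak{a}_i)])$. By the universal coefficient theorem, $|H^1_{\tor}(X_i(Y); L_m)|$ equals the torsion of the coinvariants $\Lambda_m^*/(\Gamma(\mathfrak{a}_i) - \Id)\Lambda_m^*$, which the dual version of Lemma~\ref{ub.1} bounds by a quantity of order $(m!)^{O(m)} N(\mathfrak{a}_i)^{m+1}$; combined with Lemma~\ref{ei.1}, this yields $O(\log[\Gamma(D):\Gamma(\mathfrak{a}_i)])$ and hence vanishes after normalization. The relative covolumes are estimated through \eqref{gt.2b}--\eqref{gt.2c}: the bound $[\text{idx}] \ge 1$ gives an upper bound on $\vol(H^2_{\free}(X_i(Y),\pa X_i(Y); L_m))$ in terms of boundary covolumes, while $[\text{idx}] \le |H^1_{\tor}(X_i(Y); L_m)|$ gives a lower bound on $\vol(H^1_{\free}(X_i(Y),\pa X_i(Y); L_m))$; Propositions~\ref{gt.19} and~\ref{gt.28} combined with the cusp-uniformity of $\{\Gamma(\mathfrak{a}_i)\}$ then show the boundary covolume contributions also vanish in the limit, yielding \eqref{ub.4b}. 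The main obstacle is that all inequality directions in \eqref{gt.2c} are reversed with respect to the proof of Theorem~\ref{gt.30}, so the upper bound on $|H^1_{\tor}|$ provided by Raimbault's Lemma~\ref{ub.1} plays here the role that the lower bound on $|H^2_{\tor}|$ played there.
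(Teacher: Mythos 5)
Your proposal is correct and follows essentially the same route as the paper's proof: Poincar\'e--Lefschetz duality to pass to the relative complex, the Cheeger identity \eqref{gt.2a} combined with the duality \eqref{gt.3} and the vanishings at $q\in\{0,1\}$, Theorem~\ref{pr.6} and \cite[Theorem~1.1]{MP2014} for the Reidemeister torsion, Lemmas~\ref{ub.1} and \ref{ei.1} to kill the $H^3_{\tor}$ of the relative complex, and \eqref{gt.4} with Propositions~\ref{gt.19}--\ref{gt.28} for the covolumes. The only cosmetic difference is that you run the argument directly with $L_m$ (which requires noting that Propositions~\ref{gt.19}--\ref{gt.28}, stated for the self-dual $\bE_P$, apply to $L_{m,P}$ via Lemma~\ref{gt.47}, and that the needed dual of Lemma~\ref{ub.1} holds since $\Lambda_m^*\cong\Lambda_m/\delta_D$), whereas the paper runs it with $\bL_m$ and descends to $L_m$ only at the very end via self-duality.
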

\begin{proof}
By Lemma~\ref{ub.1}, notice that 
\begin{equation}
      |H_0(\bX_i;L_m)|\le (m!N(\mathfrak{a}_i))^{m+1}.
\label{ubsr.1}\end{equation}
Hence, by Lemma~\ref{ei.1}, this means that 
$$
    \lim_{i\to\infty} \frac{\log |H_0(\bX_i;L_m)|}{[\Gamma(D):\Gamma(\mathfrak{a}_i)]}=0.
$$
By the universal coefficient theorem \eqref{gt.11}, the same is true for $H^1_{\tor}(\bX_i;L_m)$, namely
\begin{equation}
     \lim_{i\to\infty} \frac{\log |H_{\tor}^1(\bX_i;L_m)|}{[\Gamma(D):\Gamma(\mathfrak{a}_i)]}=0.
     \label{ub.5}\end{equation}
On the other hand, by Poincar\'e-Lefschetz duality,
$$
        H^3(\bX_i;L_m)\cong H_0(\bX_i,\pa\bX_i;L_m)=\{0\}.
$$      
Finally, we have trivially that $H^0_{\tor}(\bX_i;L_m)=\{0\}$.  Using \eqref{gt.3}, we see that in terms of relative cohomology,
\begin{equation}
      H_{\tor}^q(\bX_i,\pa\bX_i;L_m)\cong \{0\} \quad \mbox{for} \; q\in\{0,1\} \quad \mbox{and} \quad \lim_{i\to 0} \frac{\log |H_{\tor}^3(X_i;\pa X_i;L_m)|}{[\Gamma(D):\Gamma(\mathfrak{a}_i)]}=0.
\label{ub.6}\end{equation}     
These observations imply that \eqref{gt.2a} can be rewritten
$$
    \tau(\bX_i, \pa\bX_i;\bL_m)^{-2}=  \frac{|H^2_{\tor}(\bX_i,\pa\bX_i;\bL_m)|}{|H^3_{\tor}(\bX_i,\pa\bX_i;\bL_m)|}  \prod_q (\vol_{\bmu^{\bbR}_{X_i,\pa X_i}}(H^q_{\free}(\bX_i,\pa\bX_i,\bL_m)))^{(-1)^{q+1}}.
$$
Now, we can use \eqref{gt.4} as well as Proposition~\ref{gt.19} and Proposition~\ref{gt.28} to control the covolumes in this expression, so that combined with \eqref{ub.6}, this yields
$$
      \lim_{i\to \infty}\sup \frac{\log|H^2_{\tor}(\bX_i,\pa\bX_i;\bL_m)|}{[\Gamma(D);\Gamma(\mathfrak{a}_i)]}\le \lim_{i\to \infty}\frac{-2\log  \tau(\bX_i, \pa\bX_i;\bL_m)}{[\Gamma(D):\Gamma(\mathfrak{a}_i)]}.
$$
Hence, as in the proof of Theorem~\ref{gt.30}, we can use \eqref{gt.33} and Theorem~\ref{pr.6} to conclude that in fact
$$
\lim_{i\to \infty}\sup \frac{\log|H^2_{\tor}(\bX_i,\pa\bX_i;\bL_m)|}{[\Gamma(D);\Gamma(\mathfrak{a}_i)]}\le -2t^{(2)}_{\bbH^3}(\overline{\varrho}_m)\vol(X_D)>0.
$$
Using \eqref{gt.3}, this implies that 
$$
\lim_{i\to \infty}\sup \frac{\log|H^2_{\tor}(\bX_i;\bL_m)|}{[\Gamma(D);\Gamma(\mathfrak{a}_i)]}\le -2t^{(2)}_{\bbH^3}(\overline{\varrho}_m)\vol(X_D)>0.
$$
We can then use the fact that $L_m$ is self-dual to deduce the corresponding result for $L_m$.  
\end{proof}
Compared to \eqref{int.6}, the right hand side in \eqref{ub.4b} has an extra factor of $2$, so that Corollary~\ref{gt.50} and Theorem~\ref{ub.4} cannot be combined to conclude as in the compact case \cite{BV} that 
$$
\lim_{i\to \infty} \frac{\log|H^2_{\tor}(\bX_i;\bL_m)|}{[\Gamma(D);\Gamma(\mathfrak{a}_i)]}
$$
exists.  The precise point in our argument where this factor of $2$ appears is in the estimate \eqref{gt.2c}.  More precisely, an improvement in the upper bound in \eqref{gt.2c} would translate into an improvement of the lower bound in \eqref{gt.52b}, while an improvement in the lower bound of \eqref{gt.2c} would translate in an improvement of the upper bound in \eqref{ub.4b}.  Thus, to prove that the limit exists, one would need to improve estimate \eqref{gt.2c}.

\section{Exponential growth of torsion for sequences of representations} \label{sr.0}

Combining Theorem~\ref{pr.6} with the results of \cite{MP2012}, we can also obtain results about the growth of torsion when the group $\Gamma$ is fixed and the representation $\varrho$ varies.  To do so, we will restrict ourselves to the case where $\Gamma\subset \Gamma(D)= \SL(2,\cO_D)$ is a torsion-free finite index subgroup such that \eqref{dc.3} holds and consider the sequence of representations $\varrho_m$ defined in \eqref{sp.1}.  Let $X= \Gamma\setminus \SL(2,\bbC)/\SU(2)$ be the corresponding hyperbolic manifold of finite volume.  Denote by $E_m= \bbH^3\times_{\left. \varrho_m\right|_{\Gamma}} V_m$ the flat vector bundle associated to $\varrho_m$ with natural Hermitian metric $h_{E_m}$.  Recall also that we denote by $L_m= \bbH^3\times_{\left. \varrho_m\right|_{\Gamma}} \Lambda_m$ the associated bundle of free $\bbZ$-module, where $\Lambda_m= \Symm^m \cO^2_{D}$.     

If $\bbP^1(F)$ is the projective line of the field $F=\bbQ(\sqrt{-D})$, then we know, for instance from \cite{EGM}, that the cusps of $X$ are in bijection with 
$\Gamma\setminus \bbP^1(F)$.  Furthermore, in this setting, the Iwasawa decomposition $\SL(2,\bbC)= NAK$ is given by 
$$
   A= \left\{ \left( \begin{array}{cc}  \lambda & 0 \\ 0 & \lambda^{-1}  \end{array} \right) \; | \; \lambda>0 \right\},  \quad N= \left\{ \left( \begin{array}{cc}  1 & z \\ 0 & 1  \end{array} \right) \; | \; z\in\bbC \right\},
$$
and the standard parabolic subgroup is $P_0= NAM$ with 
$$
     M= \left\{ \left( \begin{array}{cc}  \lambda & 0 \\ 0 & \lambda^{-1}  \end{array} \right) \; | \; \lambda\in \bbS^1\subset \bbC \right\}.
$$
In particular, $P_0$ is the stabilizer of $\infty:= [1:0]\in \bbP(\bbC)$ and is a $\Gamma(D)$-cuspidal parabolic subgroup, hence a $\Gamma$-cuspidal parabolic subgroup as well.  

Given $P$ a $\Gamma$-cuspidal parabolic subgroup of $\SL(2,\bbC)$, it has a corresponding cusp represented by an element $\eta\in \bbP^1(F)$. Taking   
$$
B_P=\left( \begin{array}{cc}  a & b \\ c & d  \end{array} \right)\in \SL(2,F)
$$
such that $B_P\eta=\infty$, we see that $P$ is related to the standard parabolic subgroup via $P=B_P^{-1}P_0B_P$.  In particular, $N_P= B_P^{-1} N B_P$, which gives a corresponding identification of the Lie algebras, $\mathfrak{n}_P\cong \mathfrak{n}=\bbC$.  Under the identification $N_P\cong N$, the group $\Gamma_P$ becomes the following subgroup of 
$$
        B_P \Gamma(D)_P B_P^{-1}= \left\{ \left( \begin{array}{cc}  1 & \omega \\ 0 & 1  \end{array} \right) \; | \;  \omega \in \mathfrak{u}^{-2}\right\},
$$
where $\mathfrak{u}$ is the $\cO_D$-module generated by $c$ and $d$ and 
$$
     \mathfrak{u}^{-2} =\left\{  \omega\in F\; |  \; u\omega\in \cO_D  \;  \forall u \in \mathfrak{u}^2 \right\}.
$$
Thus, in terms of the identification $\mathfrak{n}_P\cong \mathfrak{n}=\bbC$, the lattice $\Lambda_P(\Gamma):= \log(\Gamma_P) \subset \mathfrak{n}_P$ is a sublattice of the lattice
\begin{equation}
       \Lambda_{P}(\Gamma(D))= \mathfrak{u}^{-2}\subset F\subset \bbC.
\label{sr.1}\end{equation}

With respect to this identification, the natural flat metric $g_{T_P}$ on $T_P= \Gamma_P\setminus N_P= \Lambda_P(\Gamma)\setminus \bbC$ is the one induced by the canonical Euclidean metric on $\bbC$.  On $T_P$, let $E_{m,P}= N_P\times_{\left. \varrho_m\right|_{\Gamma_P}} V_m$ be the flat vector bundle corresponding to $E_{m}$ with natural Hermitian metric $h_{E_{m,P}}$ and let $L_{m,P}= N_P\times_{\left. \varrho_m\right|_{\Gamma_P}} \Lambda_m$ be the corresponding bundle of free $\bbZ$-modules.  As in \S~\ref{co.0}, let us denote by $\vol(H^q_{\free}(T_P; L_{m,P}))$ the covolume of the lattice $H^q_{\free}(T_P; L_{m,P})$  in $H^{q}(T_P;E_{m,P})$ using the metric induced by Hodge theory and the metrics $g_{T_P}$ and $h_{E_{m.P}}$.  Similarly,
for $q=1$ let us denote by $\vol(H^1_{\free}(T_P; L_{m,P})\cap H^1_{\pm}(T_P;E_{m,P}))$ the covolume of the lattice 
$$
H^1_{\free}(T_P; L_{m,P})\cap H^1_{\pm}(T_P;E_{m,P}) \subset H^1_{\pm}(T_P;E_{m,P}).
$$  
We can now  state the  analog of Proposition~\ref{gt.28} for the sequence of representations $\{\varrho_m\}$.  
\begin{proposition}
There exists a positive constant $C_P$ depending on $P$ and $\Gamma$ such that for all $m\in \bbN$, 
$$
  \frac{1}{((m+1)!C_P^{m+1})^2} \le \frac{\vol(H^1_{\free}(T_P; \bL_{m,P})\cap H^1_{+}(T_P;\bL_{m,P}))}{\vol(H^0_{\free}(T_P; \bL_{m,P}))\vol(H^2_{\free}(T_P; \bL_{m,P}))}\le C_P
$$  
where $\bL_{m,P}= L_{m,P} \oplus L_{m,P}^*$ and $\bE_{m,P}= E_{m,P}\oplus E^*_{m,P}$.  
\label{sr.2}\end{proposition}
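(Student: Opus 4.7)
The plan is to mirror the proof strategy of Propositions~\ref{gt.19} and \ref{gt.28}, now with the representation $\varrho_m$ varying in place of the subgroup. First I would apply the van Est isomorphism \eqref{gt.17} to identify $\cH^q(T_P;\bE_{m,P})$ with the Lie algebra cohomology $\cH^q(\mathfrak{n}_P;\bV_m)$, where $\mathfrak{n}_P \cong \bbC$ is abelian of real dimension~$2$ and $\bV_m = V_m \oplus V_m^*$. Since $\varrho_m|_{\mathfrak{n}_P}$ is nilpotent with one-dimensional (complex) kernel spanned by the highest weight vector $v_m$ and one-dimensional cokernel spanned by the lowest weight vector $v_0$, a direct computation of the Koszul complex identifies $H^0 \cong \ker\varrho_m$ and $H^2 \cong \mathrm{coker}\,\varrho_m$ (each of real dimension~$4$ for $\bV_m$), while $H^1$ fits into a split short exact sequence $0 \to \mathrm{coker}\,\varrho_m \to H^1 \to \ker\varrho_m \to 0$ canonically split by the $R_P(t)$-eigenspace decomposition. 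The $\ker$-piece has $R_P(t)$-eigenvalue $t^{m-2}$, the $\mathrm{coker}$-piece has eigenvalue $t^{-m-2}$, matching the $H^1_+$ and $H^1_-$ subspaces respectively.

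Next I would identify explicit $\bbZ$-generators. The integral lattice $\Lambda_m \cap \ker \varrho_m$ equals $\cO_D\, v_m$; together with the dual structure this gives a rank-$4$ integral basis of $H^0_{\free}$, and analogous generators for $H^2_{\free}$. In the $K$-invariant Hermitian metric on $V_m = \Symm^m V$, one has $\|v_m\|_h^2 = m!$ (and a corresponding reciprocal for the dual), so the covolumes $\vol(H^0_{\free})$ and $\vol(H^2_{\free})$ admit explicit expressions in $m$ and the arithmetic data of $\Lambda_P(\Gamma)$. By Poincar\'e duality and the self-duality of $\bL_m$ from Lemma~\ref{gt.47}, the product $\vol(H^0_{\free}) \cdot \vol(H^2_{\free})$ reduces to a positive constant depending only on $P$ and $\Gamma$ (with the rescaling factor from \eqref{gt.18} absorbed into this constant), so the proposition reduces to bounding $\vol(H^1_+ \cap H^1_{\free})$ above by $C_P$ and below by $((m+1)!\,C_P^{m+1})^{-2}$ up to such constants.

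The upper bound $C_P$ follows from displaying rank-$4$ integral cocycles in $H^1_+ \cap H^1_{\free}$ of bounded norm: these arise from the naive integral generators $Y^* \otimes v_m,\, Y^* \otimes \tau v_m$ (and their duals), which project to integral classes in $H^1_+$ whose norms are bounded by $\sqrt{m!}$ on the $V_m$ side and $1/\sqrt{m!}$ on the $V_m^*$ side, yielding a covolume uniform in $m$. For the lower bound, I adapt the argument of Lemma~\ref{gt.27} and Proposition~\ref{gt.28}: the $\bbQ$-rational projection $\pr_\pm$, after passing through the van Est isomorphism and the group-cohomology description of $H^1(\Gamma_P;\Lambda_m)$, acquires denominators controlled by the $\binom{m}{j}$ coefficients appearing in the unipotent action (as in Lemma~\ref{ub.1}), combined with eigenvalue-difference factors $4^{2j-m}-4^{2k-m}$ coming from $R_P(4)$. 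These combine to give the overall denominator bound $(m+1)!\cdot C^{m+1}$.

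The main technical obstacle is the lower bound: verifying that the discrepancy between the integral group cohomology $H^1(\Gamma_P;\Lambda_m)$ and its Lie-algebra model introduces denominators of size at most $(m+1)!\cdot C_P^{m+1}$, and precisely tracking the interaction between the factorial normalisations of the $K$-invariant Hermitian metric on $V_m$ and the factorial denominators in the integral structure of $H^1_+ \cap H^1_{\free}$. A secondary subtlety concerns the $\vol(T_P)$-factors from \eqref{gt.18}, which must be carefully tracked so that they cancel cleanly in the final ratio and are absorbed into~$C_P$.
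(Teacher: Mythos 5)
Your setup (van Est, the eigenspace identification of $H^1_{\pm}$, Poincar\'e duality killing the $H^0/H^2$ contribution) matches the paper, and your upper-bound direction is essentially the paper's: exhibit integral cocycles lying in $H^1_+ \cap H^1_{\free}$ with $m$-independent covolume. (One caution there: the paper's $\cL_+$ sublattices arise by pairing the \emph{flat} harmonic representatives $\iota_P(v_m\otimes d\bar z)$ and $\iota_P(v_0^*\otimes d\bar z)$ against explicit $1$-cycles $\ell_i$, and the resulting covolume is $2\vol(T_P)w_P^2\Im\delta_D$ with no $m!$ factor at all -- so there is no factorial cancellation between $E_{m,P}$ and $E_{m,P}^*$ to keep track of; it simply never appears.)

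The genuine gap is in your lower bound, which goes a different and much harder route than the paper, and which you yourself flag as not carried out. You propose to control the denominators of the rational projection $\pr_\pm$ on the integral cohomology $H^1(\Gamma_P;\Lambda_m)$ by passing through group cohomology and tracking denominators from binomial coefficients and eigenvalue differences of $R_P(4)$. The difficulty is that this requires a precise comparison between the integral structure of $H^1(T_P;L_{m,P})$ and the lattice one sees on the Lie-algebra side after van Est -- a comparison the paper never makes and which is far from obvious, since the van Est map is not manifestly compatible with integral structures. The paper sidesteps this entirely with a short trick: from the sublattice inequality \eqref{lb.45b} one already knows $\vol(L_{+})\vol(L_{-})\ge 1$ (here $L_\pm = H^1_{\free}\cap H^1_\pm$), so the lower bound for $\vol(L_+)$ reduces to an \emph{upper} bound for $\vol(L_-)$. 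That upper bound is then obtained exactly as in your upper-bound step, except the relevant harmonic forms $\iota_P(v_0\otimes dz)$ and $\iota_P(v_m^*\otimes dz)$ are \emph{not} flat, so the period integrals $\int_{\ell_i}(e_2+ze_1)^m\,dz$ produce terms $\gamma_i^{k+1}/(k+1)$; multiplying by $(m+1)!\,w_P^{m+1}$ clears all denominators, giving an integral sublattice $\cL_-$ of covolume $\le \text{const}\cdot((m+1)!\,w_P^{m+1})^2$, which is exactly where the $(m+1)!\,C_P^{m+1}$ in the statement comes from. I would strongly recommend replacing your denominator-of-$\pr_\pm$ strategy with this duality trick; as it stands your lower-bound step is a plan rather than a proof, and it is not clear the denominators you would encounter via the group-cohomology comparison are bounded by $(m+1)!\,C^{m+1}$.
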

\begin{proof}
By Poincar\'e duality, notice that we have automatically that 
$$
\vol(H^0_{\free}(T_P; \bL_{m,P}))\vol(H^2_{\free}(T_P; \bL_{m,P}))=1,
$$
so we only need to prove that
\begin{equation}
  \frac{1}{((m+1)!C_P^{m+1})^2}    \le\vol(H^1_{\free}(T_P; \bL_{m,P})\cap H^1_{+}(T_P;\bE_{m,P}))\le  C_P.
\label{sr.2b}\end{equation}
Using \cite[(7.1)]{MR1} and  van Est's isomorphism induced by the map \eqref{dc.16}, we know that 
\begin{equation}
   \quad \cH^1_+(T_P; E_{m,P})= \bbC\iota_P(v_m\otimes d\bz), \quad \cH^1_-(T_P; E_{m,P})= \bbC\iota_P(v_0\otimes dz), 
\end{equation}
as well as the dual statement (using $\varrho^*_m$ instead of $\varrho_m$ to define $\iota_P$)
\begin{equation}
\cH^1_+(T_P; E^*_{m,P})= \bbC\iota_P(v^*_0\otimes d\bz), 
   \quad \cH^1_-(T_P; E^*_{m,P})= \bbC\iota_P(v^*_m\otimes dz).
\label{sr.3}\end{equation}
Let $\{\gamma_1,\gamma_2\}\subset F$ be a basis of $\Lambda_P(\Gamma)\subset F\subset \bbC\cong \mathfrak{n}_P$.  Let $\ell_i: [0,1] \to \Lambda_P(\Gamma)\setminus \mathfrak{n}_P$ be a smooth path with $\ell_i(0)=\ell_i(1)=0$ and with homotopy class $[\ell_i]\in \pi_1(T_P)=\pi_1(\Lambda_P(\Gamma)\setminus \mathfrak{n}_P)\cong \Lambda_{P}(\Gamma)$ representing $\gamma_i$.  Since $\iota_P(v_m)$ and $\iota_P(v_0^*)$ are flat sections, we compute that 
\begin{equation}
\begin{gathered}
       \int_{\ell_i} \iota_P(v_m\otimes d\bz)= \int_{\ell_i} e_1^m d\bz= \overline{\gamma}_i e_1^m  \in \Lambda_m\otimes_{\cO_D} F; \\
       \int_{\ell_i} \iota_P(v_0^*\otimes d\bz)= \int_{\ell_i} (e_2^*)^m d\bz= \overline{\gamma}_i (e_2^*)^m  \in \Lambda_m\otimes_{\cO_D} F.
       \end{gathered}       
\label{sr.4}\end{equation}
Thus, if $w_P\in \bbN$ is such that $w_P \overline{\gamma}_i \in \cO_D$ for $i\in\{1,2\}$, we see from \eqref{sr.4} that 
$$
  \{\nu_1,\nu_2\}= \left\{  \begin{array}{ll} \{  w_P \iota_P(v_m\otimes d\bz), \sqrt{-D} w_P \iota_P(v_m\otimes d\bz)\}, & \mbox{when} \; D\equiv 1,2 \mod(4) \\
      \{  w_P \iota_P(v_m\otimes d\bz), \frac{1 + \sqrt{-D}}{2} w_P \iota_P(v_m\otimes d\bz)\}, &  \mbox{when} \; D\equiv 3 \mod(4),
        \end{array}  \right.
$$         
is the basis of a \textbf{sublattice} $\cL_+(\varrho_m)$ of $H^1_{\free}(T_P; L_{m,P})\cap H^1_+(T_P;E_{m,P})$, so that 
\begin{equation}
\vol(H^1_{\free}(T_P; L_{m,P})\cap H^1_{+}(T_P; E_{m,P}))\le \vol(\cL_+(\varrho_m))= 2\vol(T_P) w_P^2 \Im \delta_D,
\label{sr.5}\end{equation} 
where $\delta_D$ is defined in \eqref{gt.45b}.  Similarly, 
$$
\{w_1,w_2\}= \left\{  \begin{array}{ll} \{  w_P \iota_P(v_0^*\otimes d\bz), \frac{1}{\sqrt{-D}} w_P \iota_P(v_0^*\otimes d\bz)\}, & \mbox{when} \; D\equiv 1,2 \mod(4), \\
 \{ 2w_P \iota_P(v_0^*\otimes d\bz), \left( 1 + \frac{1}{\sqrt{-D}}\right) w_P \iota_P(v_0^*\otimes d\bz)\}, &  \mbox{when} \; D\equiv 3 \mod(4), 
  \end{array}  \right.
$$    
is the basis of a \textbf{sublattice} $\cL_+(\varrho^*_m)$ of $H^1_{\free}(T_P; L^*_{m,P})\cap H^1_+(T_P;E^*_{m,P})$, so that 
\begin{equation}
\vol(H^1_{\free}(T_P; L^*_{m,P})\cap H^1_{+}(T_P; E^*_{m,P}))\le \vol(\cL_+(\varrho^*_m))= \frac{2\vol(T_P) w_P^2}{\Im \delta_D}.
\label{sr.5a}\end{equation} 
The upper bound in equation \eqref{sr.2b} then follows from \eqref{sr.4}, \eqref{sr.5}, and \eqref{sr.5a}.   

To obtain the lower bound, notice by \eqref{lb.45b} that it suffices to show that 
\begin{equation}
  \vol(H^1_{\free}(T_P; \bL_{m,P})\cap H^1_{-}(T_P; \bE_{m,P}))\le ((m+1)! C_P^{m+1})^2.
  \label{lb.1}\end{equation}
To show this, we perform the analog of \eqref{sr.4} and compute that
\begin{equation}
\begin{aligned}
       \int_{\ell_i} \iota_P(v_0\otimes d z) &= \int_{\ell_i} (\rho_m\left(\left(\begin{array}{cc} 1& z \\ 0 & 1  \end{array}\right)  \right)e_2^m) dz=  \int_{\ell_i}(e_2+z e_1)^{m}dz  \\
       &= \sum_{k=0}^m \left( \begin{array}{c} m \\ k  \end{array} \right)e_1^k e_2^{m-k}\frac{\gamma_i^{k+1}}{k+1} e_1^m  \in \Lambda_m\otimes_{\cO_D} F
       \end{aligned}       
\label{lb.2}\end{equation}
and
\begin{equation}
\begin{aligned}
       \int_{\ell_i} \iota_P(v_m^*\otimes d z) &= \int_{\ell_i} (\rho^*_m\left(\left(\begin{array}{cc} 1& z \\ 0 & 1  \end{array}\right)  \right)v_m^*) dz=  \int_{\ell_i}(e_1^*-z e_2^*)^{m}dz  \\
       &= \sum_{k=0}^m \left( \begin{array}{c} m \\ k  \end{array} \right)(-e_2^*)^k (e_1^*)^{m-k}\frac{\gamma_i^{k+1}}{k+1} e_1^m  \in \Lambda_m\otimes_{\cO_D} F.
       \end{aligned}       
\label{lb.3}\end{equation}
Thus, with $w_P\in \bbN$ as above so that $w_P\gamma_i\in \mathcal{O}_D$ for $i\in\{1,2\}$, we see from \eqref{lb.2} that 
$$
  \{  ((m+1)!w_P^{m+1}) \iota_P(v_0\otimes dz), \sqrt{-D} ((m+1)!w_P^{m+1}) \iota_P(v_0\otimes dz)\} \quad \mbox{if} \; D\equiv 1,2 \mod(4)
  $$   
 and 
$$
\{  ((m+1)!w_P^{m+1}) \iota_P(v_0\otimes dz), \frac{1 + \sqrt{-D}}{2} ((m+1)!w_P^{m+1}) \iota_P(v_0\otimes dz)\} \quad \mbox{if} \;D\equiv 3 \mod(4)
$$
is the basis of a \textbf{sublattice} $\cL_-(\varrho_m)$ of $H^1_{\free}(T_P; L_{m,P})\cap H^1_-(T_P;E_{m,P})$, so that 
\begin{equation}
\vol(H^1_{\free}(T_P; L_{m,P})\cap H^1_{-}(T_P; E_{m,P}))\le \vol(\cL_-(\varrho_m))= 2\vol(T_P) ((m+1)!w_P^{m+1})^2 \Im \delta_D.
\label{lb.4}\end{equation} 
Similarly, 
$$
 \{  ((m+1)!w_P^{m+1}) \iota_P(v_m^*\otimes d\bz), \frac{1}{\sqrt{-D}} ((m+1)!w_P^{m+1}) \iota_P(v_m^*\otimes d\bz)\} \quad \mbox{if} \; D\equiv 1,2 \mod(4)
$$
 and 
$$
\{ 2((m+1)!w_P^{m+1}) \iota_P(v_m^*\otimes d\bz), \left( 1 + \frac{1}{\sqrt{-D}}\right) ((m+1)!w_P^{m+1}) \iota_P(v_m^*\otimes d\bz)\} \quad \mbox{if} \; D\equiv 3 \mod(4)
$$
 is the basis of a \textbf{sublattice} $\cL_-(\varrho^*_m)$ of $H^1_{\free}(T_P; L^*_{m,P})\cap H^1_-(T_P;E^*_{m,P})$, so that 
\begin{equation}
\vol(H^1_{\free}(T_P; L^*_{m,P})\cap H^1_{-}(T_P; E^*_{m,P}))\le \vol(\cL_-(\varrho^*_m))= \frac{2\vol(T_P) ((m+1)!w_P^{m+1})^2}{\Im \delta_D}.
\label{lb.5}\end{equation} 
Combining \eqref{lb.4} and \eqref{lb.5} then gives the upper bound \eqref{lb.1} and completes the proof.
\end{proof} 

We can now combine Theorem~\ref{pr.6} with the results of \cite{MP2012} to prove Theorem~\ref{sr.6}.

\begin{proof}[Proof of Theorem~\ref{sr.6}]
By Proposition~\ref{sr.2}, equation \eqref{gt.5} and the fact that 
$$|H^0_{\tor}(X(Y);\bL_m)|=1,
$$
 there is a positive constant $C$ such that 
\begin{equation}
  \tau(X(Y),\bE_m,\bmu^m_{X})^{-2}\le ( (m+1)!C^{m+1})^2    |H^2_{\tor}(X(Y);\bL_m)|^2,
\label{sr.7}\end{equation}
where $\bmu^m_{X}$ is a basis of $H^*(X;\bE_m)$ as in \eqref{bc.1}. On the other hand, by Theorem~\ref{pr.6},
\begin{equation}
   \log T(X;\bE_m,g_X, h_{E_m})= \log \tau(X(Y),\bE_m,\bmu^m_X) + \kappa(X)c_{\varrho_m},
\label{sr.8}\end{equation} 
since $\tau(\pa X(Y), \bE_m, \bmu^m_{\pa X})=1$ by  Poincar\'e duality, Milnor duality and \cite[Proposition~1.12]{Muller1993}.  Furthermore, in \cite[Theorem~7.1]{MR1}, the constant $c_{\varrho_m}$ was explicitly computed to be
\begin{equation}
   c_{\varrho_m}= \log(m+1) + \frac{B(m)}{2} = \cO(m\log m) \quad \mbox{as} \; m\to \infty,
\label{sr.9}\end{equation}
where
$$
   B(m):=\sum_{\kappa=0}^{m-1}  \log\left( \frac{ \frac{m}2-\kappa +\sqrt{(\frac{m}2-\kappa)^2+2(1+\kappa)(m-\kappa)}}{\frac{m}2-\kappa-1+\sqrt{(\frac{m}2-\kappa-1)^2+2(1+\kappa)(m-\kappa)}} \right) .
$$
On the other hand, by \cite[Theorem~1.1]{MP2012},
\begin{equation}
  \lim_{m\to\infty} \frac{\log T(X;\bE_m,g_X,h_{\bE_m})}{m^2}= -\frac{\vol(X)}{\pi}.
\label{sr.10}\end{equation}
Hence, combining this equation with \eqref{sr.7}, \eqref{sr.8},  \eqref{sr.9} and Stirling's formula, we see that 
\begin{equation}
   \liminf_{m\to \infty} \frac{\log |H^2_{\tor}(X;\bL_m)|}{m^2}\ge \frac{\vol(X)}{\pi}.
\label{sr.11}\end{equation}
Finally, thanks to Lemma~\ref{gt.47}, $L^*_m\cong L_m$, so  
$$
|H^2_{\tor}(X;\bL_m)|= |H^2_{\tor}(X;L_m)|^2
$$
and the lower bound for the exponential growth of torsion follows by inserting this last equation in \eqref{sr.11}.

For the upper bound, we need first to control the growth of torsion in odd degrees.  Unfortunately, the upper bound \eqref{ubsr.1} coming from Lemma~\ref{ub.1} does not behave so well when $m\to \infty$. Instead we proceed as in \cite[Sect 4]{Marshall-Muller}. First note that
\begin{equation}\label{ubsr.2}
  H^3_{\tor}(X,\pa X;\bL_m)\cong  H_0(X;\bL_m)_{\tor}.
\end{equation}
Moreover $H_0(X;L_m)_{\tor}\cong (\Lambda_m)_{\Gamma}$, where the right
hand side denotes the co-invariants. For each prime $\pf$ of $F$, let
$V_{m,\pf}$ be the completion of $V_m$ at $\pf$, and let $\Lambda_{m,\pf}$ be the
completion of the image of $\Lambda_m$. Then $\Lambda_{m,\pf}$ is a
$\Gamma$-stable lattice, and we have
\begin{equation}\label{localiz}
  \log|(\Lambda_{m})_\Gamma|=\sum\log|(\Lambda_{m,\pf})_\Gamma|.
  \end{equation}
  Since $F$ is imaginary quadratic, \cite[Lemma 4.1]{Marshall-Muller} implies
  that $(\Lambda_{m,\pf})_{\Gamma}$ is trivial when $N(\pf)>m^2$. Hence we may
  assume that the summation on the right hand side of\eqref{localiz} runs
  over $N(\pf)\le m^2$ only. Using \cite[Proposition 4.2]{Marshall-Muller} and
  \eqref{localiz} it follows that there exists $C>0$ such that
\begin{equation}\label{bound-H0}
  \log|(\Lambda_{m})_\Gamma|\le \sum_{N(\pf)\le m^2}\frac{m\log N(\pf)}{N(\pf)}
  \le C m\log m
\end{equation}
for all $m\in\N$. Using \eqref{ubsr.2} and $|H_0(X;\bL_m)_{\tor}|
=|H_0(X;L_m)_{\tor}|^2=|(\Lambda_m)_\Gamma|^2$, we get  
\begin{equation}
    |H^3_{\tor}(X,\pa X;\bL_m)|=|H_0(X;\bL_m)_{\tor}|\le m^{Cm}.
    \label{ubsr.3}\end{equation}
Moreover, by \eqref{ub.6},
$$
       H^q_{\tor}(X,\pa X;\bL_m)\cong \{0\} \quad \mbox{for} \; q\in \{0,1\}.
$$
Using \eqref{gt.4} and Proposition~\ref{sr.2}, this means that \eqref{gt.2a} can be rewritten
\begin{equation}
\begin{aligned}
\tau(X,\pa X,\bL_m)^{-2} &= \frac{|H^2_{\tor}(X,\pa X;\bL_m)|}{|H^3_{\tor}(X,\pa X;\bL_m)|} \prod_q  \left( \vol_{\bmu^{\bbR}_{X,\pa X}} \left(H^q_{\free} (X,\pa X;\bL_{m}\right)\right)^{(-1)^{q+1}}  \\
 &\ge \frac{|H^2_{\tor}(X,\pa X;\bL_m)|}{|H^3_{\tor}(X,\pa X;\bL_m)|^2}  \frac{\vol(H^0_{\free}(\pa X; \bL_{m}))\vol(H^2_{\free}(\pa X; \bL_{m}))}{\vol(H^1_{\free}(\pa X; \bL_{m})\cap H^1_{+}(\pa X;\bL_{m}))}     \\
 &\ge  \frac{|H^2_{\tor}(X,\pa X;\bL_m)|}{|H^3_{\tor}(X,\pa X;\bL_m)|^2} \prod_{P\in \mathfrak{P}_{\Gamma}} \frac{1}{C_P}.
\end{aligned}
\label{ubsr.7}\end{equation}
In other words, we have that 
\begin{equation}
|H^2_{\tor}(X,\pa X;\bL_m)|\le \left( \prod_{P\in \mathfrak{P}_{\Gamma}} C_P \right) |H^3_{\tor}(X,\pa X;\bL_m)|^2\tau(X,\pa X,\bL_m)^{-2}.
\label{ubsr.4}\end{equation}
Hence, by \eqref{ubsr.2}, \eqref{sr.8}, \eqref{sr.9} and \eqref{sr.10}, we see that 
\begin{equation}
    \limsup_{m\to\infty} \frac{\log |H^2_{\tor}(X,\pa X;\bL_m)|}{m^2}\le \frac{2\vol(X)}{\pi}.
\label{ubsr.5}\end{equation}
Finally, since $H^2_{\tor}(X,\pa X;\bL_m)\cong H^2_{\tor}(X;\bL_m)$ by \eqref{gt.3} and $L^*_m\cong L_m$ by Lemma~\ref{gt.47}, we have that 
\begin{equation}
    |H^2_{\tor}(X,\pa X;\bL_m)|= |H^2_{\tor}(X;\bL_m)|^2,
\label{ubsr.6}\end{equation}
so the upper bound follows by substituting \eqref{ubsr.6} into \eqref{ubsr.5}.
\end{proof}

\bibliographystyle{amsalpha}
\bibliography{torsion_cusp}

\end{document}